\renewcommand{\eqref}[1]{\hyperref[#1]{(\ref{#1})}}
\numberwithin{equation}{section}
\newtheorem{letterthm}{Theorem}
\newtheorem{lettercor}[letterthm]{Corollary}
\newtheorem{thm}{Theorem}[section]
\newtheorem{lem}[thm]{Lemma}
\newtheorem{cor}[thm]{Corollary}
\newtheorem{prop}[thm]{Proposition}
\theoremstyle{definition}
\newtheorem{remark}[thm]{Remark}
\newtheorem{example}[thm]{Example}
\DeclareMathOperator*{\bigast}{{\text{\huge $\ast$}}}
\newcommand{\R}{\mathbf{R}}
\newcommand{\C}{\mathbf{C}}
\newcommand{\Z}{\mathbf{Z}}
\newcommand{\F}{\mathbf{F}}
\newcommand{\N}{\mathbf{N}}
\newcommand{\cP}{\mathcal{P}}
\newcommand{\cF}{\mathcal{F}}
\newcommand{\cZ}{\mathcal{Z}}
\newcommand{\cU}{\mathcal{U}}
\newcommand{\cM}{\mathcal{M}}
\newcommand{\cC}{\mathcal{C}}
\newcommand{\cn}{\mathfrak{n}}
\newcommand{\cm}{\mathfrak{m}}
\newcommand{\Ad}{\operatorname{Ad}}
\newcommand{\id}{\text{\rm id}}
\newcommand{\si}{\sigma}
\newcommand{\vphih}{\hat{\vphi}}
\newcommand{\psih}{\hat{\psi}}
\newcommand{\rL}{\mathord{\text{\rm L}}}
\newcommand{\rB}{\mathord{\text{\rm B}}}
\newcommand{\ap}{\mathord{\text{\rm ap}}}
\newcommand{\rT}{\mathord{\text{\rm T}}}
\newcommand{\rD}{\mathord{\text{\rm D}}}
\newcommand{\rE}{\mathord{\text{\rm E}}}
\newcommand{\Sd}{\mathord{\text{\rm Sd}}}
\newcommand{\core}{\mathord{\text{\rm c}}}
\newcommand{\rd}{\mathord{\text{\rm d}}}
\newcommand{\Tr}{\mathord{\text{\rm Tr}}}
\newcommand{\supp}{\mathord{\text{\rm supp}}}
\newcommand{\ot}{\otimes}
\newcommand{\ovt}{\mathbin{\overline{\otimes}}}
\newcommand{\al}{\alpha}
\newcommand{\dpr}{^{\prime\prime}}
\newcommand{\recht}{\rightarrow}
\newcommand{\vphi}{\varphi}
\newcommand{\eps}{\varepsilon}
\newcommand{\cS}{\mathcal{S}}
\newcommand{\Om}{\Omega}
\newcommand{\ri}{\text{\rm i}}
\newcommand{\Hcirc}{\overset{\circ}{H}}
\newcommand{\cA}{\mathcal{A}}
\newcommand{\mutil}{\widetilde{\mu}}
\begin{document}

\title[Classification of a family of non almost periodic free Araki--Woods factors]{Classification of a family of non almost periodic\\ free Araki--Woods factors}

\begin{abstract}
We obtain a complete classification of a large class of non almost periodic free Araki--Woods factors $\Gamma(\mu, m)\dpr$ up to isomorphism. We do this by showing that free Araki--Woods factors $\Gamma(\mu, m)\dpr$ arising from finite symmetric Borel measures $\mu$ on $\R$ whose atomic part $\mu_a$ is nonzero and not concentrated on $\{0\}$ have the joint measure class $\mathcal C(\bigvee_{k \geq 1} \mu^{\ast k})$ as an invariant. Our key technical result is a deformation/rigidity criterion for the unitary conjugacy of two faithful normal states. We use this to also deduce rigidity and classification theorems for free product von Neumann algebras.
\end{abstract}

\author{Cyril Houdayer}
\address{Laboratoire de Math\'ematiques d'Orsay \\ Universit\'e Paris-Sud \\ CNRS \\ Universit\'e Paris-Saclay \\ 91405 Orsay \\ France}
\email{cyril.houdayer@math.u-psud.fr}
\thanks{CH is supported by ERC Starting Grant GAN 637601}

\author{Dimitri Shlyakhtenko}
\address{Mathematics Department\\ UCLA \\ Los Angeles \\ CA 90095-1555 \\ United States}
\email{shlyakht@math.ucla.edu}
\thanks{DS is supported by NSF Grant DMS-1500035}

\author{Stefaan Vaes}
\address{KU Leuven \\ Department of Mathematics \\ Celestijnenlaan 200B \\ B-3001 Leuven \\ Belgium}
\email{stefaan.vaes@kuleuven.be}
\thanks{SV is supported by ERC Consolidator Grant 614195, and by long term structural funding~-- Methusalem grant of the Flemish Government}

\subjclass[2010]{46L10, 46L54, 46L36}

\keywords{Free Araki--Woods factors; Free product von Neumann algebras; Popa's deformation/rigidity theory; Type ${\rm III}$ factors}

\maketitle

\mbox{}\vspace{-6ex}

\section{Introduction}

Free Araki-Woods factors are a free probability analog of the type III hyperfinite factors, just like free group factors are free probability analogs of the hyperfinite II$_1$ factor. The classification of hyperfinite type III factors has a beautiful history originating in the work of Powers \cite{Po67}. Through the works of Connes \cite{Co72}, Haagerup \cite{Ha85} and Krieger \cite{Kr75}, the classification question was ultimately reduced to the classification of ergodic actions of the additive group of real numbers $\R$, i.e., to classification of virtual subgroups of $\R$ in the sense of Mackey.

Following \cite{Sh96}, to every orthogonal representation $(U_t)_{t \in \R}$ of $\R$ on a real Hilbert space $H_\R$ is associated the free Araki--Woods factor $\Gamma(U,H_\R)\dpr$.
For almost periodic representations, i.e.\ when $(U_t)$ is a direct sum of finite dimensional representations, the free Araki--Woods factors were completely classified in \cite{Sh96} by Connes' $\Sd$ invariant \cite{Co74}, which is in this case equal to the subgroup of $(\R^\ast_+,\cdot)$ generated by the eigenvalues of $(U_t)$.
Beyond the almost periodic case, the classification of free Araki--Woods factors is a very intriguing open problem and there is not even a conjectural classification statement. So far, one could only distinguish between families of non almost periodic free Araki--Woods factors by computing their invariants, like Connes' $\tau$-invariant (see \cite{Sh97b,Sh02}), or by structural properties of their continuous core (see \cite{Sh02, Ho08b, Ha15}). In this paper, we prove the first complete classification theorem for a large family of non almost periodic free Araki--Woods factors.

Orthogonal representations of $\R$ are classified by their spectral measure and multiplicity function. So, to any finite symmetric Borel measure $\mu$ on $\R$ and to any symmetric Borel multiplicity function $m : \R \to \N \cup \{+\infty\}$ (that we always assume to satisfy $m \geq 1$ $\mu$-almost everywhere), we associate the free Araki--Woods factor $\Gamma(\mu, m)\dpr$, which comes equipped with the free quasi-free state $\varphi_{\mu, m}$. The almost periodic case corresponds to $\mu$ being an atomic measure and then, by \cite{Sh96}, $\Gamma(\mu_1,m_1)\dpr$ is isomorphic with $\Gamma(\mu_2,m_2)\dpr$ if and only if the sets of atoms of $\mu_1$ and $\mu_2$ generate the same subgroup of $(\R,+)$.

In this paper, we fully classify the free Araki--Woods factors in the case where the atomic part $\mu_a$ is nonzero and not concentrated on $\{0\}$ and where the continuous part $\mu_c$ satisfies $\mu_c \ast \mu_c \prec \mu_c$. We find in particular that in that case, the free Araki--Woods factor does not depend on the multiplicity function $m$. But we also show that in other cases, $\Gamma(\mu, m)\dpr$ does depend on $m$.

In order to state our main results, we first introduce some terminology.
For every $\sigma$-finite Borel measure $\mu$ on $\R$, we denote by $\cC(\mu)$ the \emph{measure class} of $\mu$, defined as the set of all Borel sets $U \subset \R$ with $\mu(U)=0$. Note that $\cC(\mu) = \cC(\nu)$ if and only if $\mu \sim \nu$, while $\cC(\mu) \subset \cC(\nu)$ if and only if $\nu \prec \mu$. For any sequence of measures $(\mu_k)_{k \in \N}$, we denote by $\bigvee_{k \in \N} \mu_k$ any measure with the property that $\mathcal C(\bigvee_{k \in \N} \mu_k) = \bigcap_{k \in \N} \mathcal C(\mu_k)$. We denote by $\mu = \mu_c + \mu_a$ the unique decomposition of a measure $\mu$ as the sum of a continuous and an atomic measure.

We show that free Araki--Woods factors $\Gamma(\mu, m)\dpr$ arising from finite symmetric Borel measures $\mu$ on $\R$ whose atomic part $\mu_a$ is nonzero and not concentrated on $\{0\}$ have the  joint measure class $\mathcal C(\bigvee_{k \geq 1} \mu^{\ast k})$ as an invariant. More precisely, we obtain the following result.

\begin{letterthm}\label{thmA}
Let $\mu, \nu$ be finite symmetric Borel measures on $\R$ and $m, n : \R \to \N \cup \{+\infty\}$ symmetric Borel multiplicity functions. Assume that $\nu$ has at least one atom not equal to $0$.

If the free Araki--Woods factors $\Gamma(\mu, m)\dpr$ and $\Gamma(\nu, n)\dpr$ are isomorphic, then there exists an isomorphism that preserves the free quasi-free states. In particular, the joint measure classes $\mathcal C(\bigvee_{k \geq 1} \mu^{\ast k})$ and $\mathcal C(\bigvee_{k \geq 1} \nu^{\ast k})$ are equal.
\end{letterthm}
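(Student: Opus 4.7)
The proof divides naturally into two stages: (i) upgrade any isomorphism $\pi : \Gamma(\mu,m)\dpr \to \Gamma(\nu,n)\dpr$ to one that preserves the free quasi-free states, and (ii) extract the joint measure class as a spectral invariant of the modular operator of any such state-preserving isomorphism.

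\textbf{Stage (i): from isomorphism to state-preserving isomorphism.} Set $\psi := \varphi_{\mu,m} \circ \pi^{-1}$, a faithful normal state on $\Gamma(\nu,n)\dpr$. It suffices to prove that $\psi$ is unitarily conjugate to $\varphi_{\nu,n}$, since then composing $\pi$ with the inner automorphism implementing the conjugation yields a state-preserving isomorphism. For this I would invoke the deformation/rigidity criterion for unitary conjugacy of faithful normal states announced in the abstract. The hypothesis that $\nu$ has an atom $\lambda \neq 0$ is crucial: it produces a unitary $v \in \Gamma(\nu,n)\dpr$ with $\sigma_t^{\varphi_{\nu,n}}(v) = \lambda^{\ri t} v$, i.e., a nontrivial eigenoperator of the modular group. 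This unitary provides the rigid direction against which the malleable deformation on the free Araki--Woods / free product side can be played, and the criterion then outputs the desired unitary intertwining $\psi$ and $\varphi_{\nu,n}$.

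\textbf{Stage (ii): the spectral invariant.} In the Fock space realization of $(\Gamma(\mu,m)\dpr, \varphi_{\mu,m})$, the modular operator is the second quantization
\[
\Delta_{\varphi_{\mu,m}}^{\ri t} \;=\; \bigoplus_{k \geq 0} U_t^{\otimes k}
\]
of the complexified one-parameter unitary group $U_t$ on the one-particle Hilbert space, whose spectral measure has class $\cC(\mu)$ and multiplicity $m$. Since the spectral measure class of $U_t^{\otimes k}$ on the $k$-fold tensor power equals $\cC(\mu^{\ast k})$, independently of $m$, the spectral measure class of $\log \Delta_{\varphi_{\mu,m}}$ on the orthogonal complement of the vacuum vector is exactly $\cC(\bigvee_{k \geq 1} \mu^{\ast k})$; the analogous identity holds for $(\nu,n)$. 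A state-preserving isomorphism extends to a unitary of GNS Hilbert spaces intertwining the modular operators, so the spectral measure classes on the complements of the vacua must coincide, yielding $\cC(\bigvee_{k \geq 1} \mu^{\ast k}) = \cC(\bigvee_{k \geq 1} \nu^{\ast k})$.

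\textbf{Main obstacle.} The technical heart lies entirely in stage (i). A type III analog of Popa's intertwining-by-bimodules for \emph{states} is required, and the subtlety is to show that the rigidity extracted from the modular eigenoperator $v$ is compatible with \emph{both} modular structures $\sigma^{\varphi_{\nu,n}}$ and $\sigma^{\psi}$ simultaneously, so that one obtains an intertwining unitary for the states themselves and not merely a bimodule position between subalgebras. The assumption that $\nu$ carries an atom away from $0$ is precisely what makes such a deformation/rigidity attack feasible; it is the reason this is the first complete classification result in the non almost periodic regime and why the remaining cases stay open.
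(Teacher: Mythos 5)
Your Stage (ii) is correct and is exactly the (implicit) computation the paper relies on: $\cC(\Delta_{\vphi_{\mu,m}})$ restricted to the complement of the vacuum is $\cC(\bigvee_{k\geq 1}\mu^{\ast k})$, independently of $m$. The genuine gap is in Stage (i). The deformation/rigidity criterion (the paper's Theorem \ref{thm-key} and Corollary \ref{cor.key}) does \emph{not} produce a unitary conjugating $\psi=\vphi_{\mu,m}\circ\pi^{-1}$ to $\vphi_{\nu,n}$; it only produces a partial isometry $v$ with $vv^\ast\in N^{\vphi_{\nu,n}}$, $v^\ast v\in N^{\psi}$ and $\psi(v^\ast v)^{-1}\,\psi_{v^\ast v}=\vphi_{\nu,n}(vv^\ast)^{-1}\,v^\ast\vphi_{\nu,n}v$, i.e.\ unitary conjugacy of \emph{corners} up to a scaling factor $\lambda=\psi(v^\ast v)/\vphi_{\nu,n}(vv^\ast)$. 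A maximality argument (Theorem \ref{thm.meta}) assembles these into a family $(v_n)$ with $\sum_n v_nv_n^\ast=1$, but there is no reason the scalars $\lambda_n$ all equal $1$ or that $\sum_n v_n^\ast v_n=1$, so global unitary conjugacy of the two states is not what comes out — and indeed the theorem's conclusion is the existence of \emph{some} state-preserving isomorphism, constructed afresh, not that $\Ad(u)\circ\pi$ works for the given $\pi$. Your reduction ``it suffices to prove $\psi$ is unitarily conjugate to $\vphi_{\nu,n}$'' is therefore a reduction to a statement that the method cannot deliver and that is presumably false in general.

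The missing work is the passage from corner conjugacy to the global statement, and it is substantial. One first uses the corner isomorphism to see that $pM^{\vphi_{\mu,m}}p$ is a nonamenable ${\rm II_1}$ factor, whence (by the description of centralizers of free quasi-free states) $M^{\vphi_{\mu,m}}$ and $N^{\vphi_{\nu,n}}$ are ${\rm II_1}$ factors containing $\rL(\F_\infty)$; this is also where the atom hypothesis actually enters — not as a ``rigid eigenoperator direction'' but by forcing the centralizer to be nonamenable, which is the hypothesis needed to run the intertwining $\rL_\psi(\R)r\prec\rL_\vphi(\R)$ in the core via \cite{HR10}. One then amplifies the corner isomorphism to a global state-preserving one using free absorption $(N,\vphi_{\nu,n})\cong(N,\vphi_{\nu,n})\ast(\rL(\F_\infty),\tau)$, the compression formula for free products with an infinite-multiplicity free Araki--Woods factor (Proposition \ref{prop.corner-free-product-aw}), and R\u{a}dulescu's theorem that the fundamental group of $\rL(\F_\infty)$ is $\R^\ast_+$. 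Alternatively, if you only want the ``in particular'' about measure classes, you can bypass the global upgrade entirely by showing that $\cC(\Delta_\vphi)$ is invariant under compression by projections in the centralizer when the centralizer is a ${\rm II_1}$ factor (the paper's Lemma \ref{lem-measure}); your proposal mentions neither route.
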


Denote by $\mathcal S(\R)$ the set of all finite symmetric Borel measures $\mu = \mu_c + \mu_a$ on $\R$ satisfying the following two properties:
\begin{itemize}
\item [$(\rm i)$] $\mu_c \ast \mu_c \prec \mu_c$ and
\item [$(\rm ii)$] $\mu_a \neq 0$ and $\supp(\mu_a) \neq \{0\}$.
\end{itemize}
Denote by $\Lambda(\mu_a)$ the countable subgroup of $\R$ generated by the atoms of $\mu_a$ and by $\delta_{\Lambda(\mu_a)}$ a finite atomic measure on $\R$ whose set of atoms equals $\Lambda(\mu_a)$.

Combining our Theorem \ref{thmA} with the isomorphism Theorem \ref{thm.isom} below, we obtain a complete classification of the free Araki--Woods factors arising from measures in $\mathcal S(\R)$. Here and elsewhere in this paper, we call \emph{isomorphism} between von Neumann algebras $M$ and $N$ any bijective $\ast$-isomorphism. Even when $M$ and $N$ are equipped with distinguished faithful normal states, isomorphisms are not assumed to preserve these states.

\begin{lettercor}\label{corB}
The set of free Araki--Woods factors
$$\left\{ \Gamma(\mu, m)\dpr : \mu \in \mathcal S(\R) \text{ and } m : \R \to \N \cup \{+\infty\} \text{ is a symmetric Borel multiplicity function}\right\}$$
is exactly classified, up to isomorphism, by the countable subgroup $\Lambda(\mu_a)$ and the measure class $\mathcal C(\mu_c \ast \delta_{\Lambda(\mu_a)})$.
\end{lettercor}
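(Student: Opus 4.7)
The plan is to deduce Corollary~\ref{corB} by combining Theorem~\ref{thmA} (which transports the joint convolution measure class across any isomorphism) with the isomorphism Theorem~\ref{thm.isom} (which realizes any pair of invariants via an actual isomorphism). The only real content beyond these two inputs is a measure-class calculation showing that, for $\mu \in \mathcal S(\R)$, the invariant $\mathcal C\bigl(\bigvee_{k \geq 1} \mu^{\ast k}\bigr)$ is equivalent data to the pair $\bigl(\Lambda(\mu_a),\,\mathcal C(\mu_c \ast \delta_{\Lambda(\mu_a)})\bigr)$.

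For the invariance direction, suppose $\Gamma(\mu,m)\dpr \cong \Gamma(\nu,n)\dpr$ with $\mu,\nu \in \mathcal S(\R)$. Because both measures have atoms away from $0$, Theorem~\ref{thmA} applies (in either direction) and yields $\mathcal C\bigl(\bigvee_{k\geq1}\mu^{\ast k}\bigr) = \mathcal C\bigl(\bigvee_{k\geq1}\nu^{\ast k}\bigr)$. I would then expand
$$\mu^{\ast k} \;=\; \sum_{j=0}^{k} \binom{k}{j}\, \mu_c^{\ast j} \ast \mu_a^{\ast(k-j)},$$
and observe that the $j=0$ term $\mu_a^{\ast k}$ is atomic while every $j\geq 1$ term is continuous. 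Condition $(\mathrm i)$ gives $\mu_c^{\ast j} \prec \mu_c$ for all $j\geq 1$ by induction, so each continuous summand is dominated by $\mu_c \ast \delta_{\Lambda(\mu_a)}$; conversely, for every $t \in \Lambda(\mu_a)$ the measure $\mu_c \ast \delta_t$ appears (up to a positive constant) inside some $\mu^{\ast(k+1)}$. Symmetry of $\mu_a$ together with a short induction yields $\bigcup_k \supp(\mu_a^{\ast k}) = \Lambda(\mu_a)$. Separating atomic and continuous parts then shows
$$\mathcal C\Bigl(\bigvee_{k\geq1} \mu^{\ast k}\Bigr) \;=\; \mathcal C(\delta_{\Lambda(\mu_a)}) \;\vee\; \mathcal C(\mu_c \ast \delta_{\Lambda(\mu_a)}),$$
with the two pieces actually being the atomic and continuous parts of the left-hand side (the second one is a countable sum of translates of the continuous measure $\mu_c$, hence continuous). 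Reading off the atoms recovers $\Lambda(\mu_a)$, and the continuous part recovers $\mathcal C(\mu_c \ast \delta_{\Lambda(\mu_a)})$, so the identity supplied by Theorem~\ref{thmA} forces $\Lambda(\mu_a) = \Lambda(\nu_a)$ and $\mathcal C(\mu_c \ast \delta_{\Lambda(\mu_a)}) = \mathcal C(\nu_c \ast \delta_{\Lambda(\nu_a)})$.

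For the existence direction, I would invoke Theorem~\ref{thm.isom}, which guarantees that on $\mathcal S(\R)$ the isomorphism class of $\Gamma(\mu,m)\dpr$ depends only on the pair $\bigl(\Lambda(\mu_a),\,\mathcal C(\mu_c \ast \delta_{\Lambda(\mu_a)})\bigr)$ and is in particular independent of the multiplicity function $m$. Combined with the first step, this gives the pair as a complete isomorphism invariant, proving Corollary~\ref{corB}.

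The hard part has already been absorbed into the two cited results; what remains is essentially bookkeeping. The single delicate point in that bookkeeping is verifying that the atomic/continuous decomposition of $\bigvee_k \mu^{\ast k}$ genuinely separates $\mathcal C(\delta_{\Lambda(\mu_a)})$ from $\mathcal C(\mu_c \ast \delta_{\Lambda(\mu_a)})$, for which one crucially uses that $\mu_c$ is continuous (so every translate and every countable sum of such translates remains continuous) and that the symmetry of $\mu_a$ is what allows the iterated convolution supports to exhaust the full group $\Lambda(\mu_a)$ rather than only the semigroup it generates.
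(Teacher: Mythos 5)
Your proposal is correct and follows essentially the same route as the paper: the existence direction is exactly Theorem \ref{thm.isom} applied after noting that $\mathcal C(\mu \ast \delta_\Lambda)$ is determined by the pair $\bigl(\Lambda(\mu_a), \mathcal C(\mu_c \ast \delta_{\Lambda(\mu_a)})\bigr)$, and the invariance direction is Theorem \ref{thmA} followed by the identification $\mathcal C\bigl(\bigvee_{k \geq 1}\mu^{\ast k}\bigr) = \mathcal C\bigl(\mu_c \ast \delta_{\Lambda(\mu_a)} \vee \delta_{\Lambda(\mu_a)}\bigr)$ using $\mu_c^{\ast k} \prec \mu_c$, with the atomic/continuous decomposition separating the two components of the invariant. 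Your binomial expansion of $\mu^{\ast k}$ and the remark on symmetry of $\mu_a$ merely spell out details the paper leaves implicit; there is no substantive difference.
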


Note that the measure class $\cC(\mu_c \ast \delta_{\Lambda(\mu_a)})$ equals the set of Borel sets $U \subset \R$ satisfying $\mu_c(x+U)=0$ for all $x \in \Lambda(\mu_a)$ and, in particular, does not depend on the choice of $\delta_{\Lambda(\mu_a)}$.

The family $\mathcal S(\R)$ is large and provides many nonisomorphic free Araki--Woods factors having the same Connes' invariants and in particular the same $\tau$-invariant, see Example \ref{ex.many-measure}. Note that previously, only two non almost periodic free Araki--Woods factors having the same $\tau$-invariant could be distinguished, see \cite[Theorem 5.6]{Sh02}.

Combining our Corollary \ref{corB} with \cite[Theorem A]{HI15}, we also obtain a complete classification for tensor products of free Araki--Woods factors arising from measures in $\mathcal S(\R)$.

We then show that free Araki--Woods factors $\Gamma(\mu, m)\dpr$ arising from continuous finite symmetric Borel measures $\mu$ on $\R$ have all their centralizers amenable, i.e.\ the centralizer of any faithful normal state is amenable. More generally, we obtain the following result.

\begin{lettercor}\label{corC}
Let $\mu$ be any finite symmetric Borel measure on $\R$ and $m : \R \to \N \cup \{+\infty\}$ any symmetric Borel multiplicity function. The free Araki--Woods factor $\Gamma(\mu, m)\dpr$ has all its centralizers amenable if and only if the atomic part $\mu_a$ of $\mu$ is either zero or is concentrated on $\{0\}$ with $m(0)=1$.
\end{lettercor}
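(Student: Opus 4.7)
I would prove the two directions separately. The ``only if'' direction is concrete: exhibit a non-amenable subalgebra in the centralizer of the free quasi-free state itself. The ``if'' direction invokes the paper's deformation/rigidity criterion for unitary conjugacy of faithful normal states.

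\textbf{Only if.} Assume $\mu_a \neq 0$ and that either $\mu_a$ has an atom away from $0$ or $m(0) \geq 2$. I would show the centralizer $M^\varphi$ of $\varphi = \varphi_{\mu, m}$ is non-amenable. In the first subcase, symmetry of $\mu$ yields atoms at $\pm\lambda$ with $\lambda \neq 0$; the associated $U$-invariant real $2$-dimensional subspace $H_0 \subset H_\R$ gives an almost periodic subfactor $N = \Gamma(H_0, U|_{H_0})\dpr \subset M$ of type ${\rm III}_\rho$ for some $\rho \in (0,1)$. By Shlyakhtenko \cite{Sh96}, the centralizer of $\varphi|_N$ in $N$ is a non-amenable (interpolated) free group factor, and it sits inside $M^\varphi$ with faithful normal conditional expectation. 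In the second subcase, pick two orthonormal fixed vectors $\xi_1, \xi_2 \in H_\R$; the semicirculars $s(\xi_i) = \ell(\xi_i) + \ell(\xi_i)^*$ are fixed by the Bogoliubov modular flow and are mutually $\varphi$-free, hence generate a copy of $\rL(\F_2)$ inside $M^\varphi$.

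\textbf{If.} Assume $\mu_a = 0$ or $\mu_a = c \delta_0$ with $m(0) = 1$. Let $\psi$ be any faithful normal state on $M = \Gamma(\mu, m)\dpr$ and suppose, for contradiction, that $M^\psi$ is non-amenable. Pick a non-amenable ${\rm II}_1$ subfactor $P \subset M^\psi$; since $\psi|_P$ is tracial, the inclusion $P \subset M$ is almost periodic with respect to $\sigma^\psi$. The key observation is that under our hypothesis $(U_t)$ on $H_\R$ has at most a one-dimensional fixed subspace, so the almost periodic part of $(M, \varphi)$ with respect to $\sigma^\varphi$ is generated by at most a single semicircular element and is in particular abelian. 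The plan is to apply the paper's deformation/rigidity criterion for unitary conjugacy of faithful normal states in order to transport the almost periodic inclusion $P \subset (M, \psi)$ into an almost periodic subalgebra of $(M, \varphi)$, yielding a non-amenable subalgebra of an abelian one~-- a contradiction.

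The principal obstacle is the transport step: the Connes cocycle $[D\psi : D\varphi]$ is not a priori controlled, so one must invoke the unitary conjugacy criterion to bring $\psi$ and $\varphi$ into alignment along $P$. Verifying the hypotheses of that criterion in the free-product decomposition $M \cong \Gamma(\mu_c, m_c)\dpr * W^*(s(\xi_0))$ (where the second factor is trivial when $\mu_a = 0$ and abelian in the remaining case) is where the bulk of the work lies; once done, the reduction to an almost periodic inclusion in $(M, \varphi)$ is routine.
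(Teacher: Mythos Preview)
Your strategy matches the paper's: the ``only if'' direction is exactly Remark~\ref{rem.centralizer}, and the ``if'' direction is the same transport idea --- use the conjugacy criterion to compare $M^\psi$ with $M^\varphi$, then observe that $M^\varphi$ is abelian (or $\C1$) under the hypothesis.

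The execution can be streamlined. You do not need to extract a nonamenable ${\rm II}_1$ subfactor $P$; it suffices to take a projection $q\in M^\psi$ with $qM^\psi q$ having no amenable direct summand, since the criterion (Corollary~\ref{cor.key}) yields a partial isometry $v$ with $v^*v=q_0\le q$, $vv^*=p\in M^\varphi$, and $q_0M^\psi q_0\cong pM^\varphi p$ --- that corner inherits nonamenability. More importantly, your free product decomposition $M\cong\Gamma(\mu_c,m_c)\dpr * W^*(s(\xi_0))$ is a detour: in either case $M$ is already a free Araki--Woods factor, so \cite[Theorem~5.2]{HR10} applies directly in the continuous core to give $\Pi_{\varphi,\psi}(\rL_\psi(\R)qr)\prec_{\core_\varphi(M)}\rL_\varphi(\R)$, which is the hypothesis of Corollary~\ref{cor.key}. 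The paper packages exactly this combination as Theorem~\ref{thm.meta}, after which the proof of Corollary~\ref{corC} is three lines.
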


By \cite{Ho08a}, all free Araki--Woods factors $M$ satisfy Connes' bicentralizer conjecture (see \cite{Co80}) and thus, by \cite[Theorem 3.1]{Ha85}, admit faithful normal states $\vphi$ such that $M^\vphi \subset M$ is an irreducible subfactor. So, having all centralizers amenable is the smallest centralizers can be in general.

In the setting of Corollary \ref{corC} and under the additional assumption that the Fourier transform of the continuous finite symmetric Borel measure $\mu_c$ vanishes at infinity, it was shown in \cite[Theorem 1.2]{Ho08b} that the continuous core of the corresponding free Araki--Woods factor $\Gamma(\mu, m)\dpr$ is {\em solid} (see \cite{Oz03}), meaning that the relative commutant of any diffuse subalgebra that is the range of a faithful normal conditional expectation is amenable. Any type ${\rm III_1}$ factor whose continuous core is solid has all its centralizers amenable. Observe that there are many free Araki--Woods factors arising in Corollary \ref{corC} whose Connes' $\tau$-invariant (see \cite{Co74}) is not the usual topology on $\R$. In particular, these free Araki--Woods factors have a continuous core that is not {\em full} (see \cite{Co74, Sh02}) and hence not solid (see \cite[Proposition 7]{Oz03} with $\mathcal N_0 = \mathcal M$). Therefore, Corollary \ref{corC} provides many new examples of type ${\rm III_1}$ factors whose centralizers are all amenable.

The following is an immediate consequence of Corollary \ref{corC}.

\begin{lettercor}\label{corD}
Let $\lambda$ be the Lebesgue measure on $\R$. Then $\Gamma(\lambda+\delta_0,1)\dpr \not\cong \Gamma(\lambda+\delta_0,2)\dpr$.
So in certain cases, the isomorphism class of $\Gamma(\mu,m)\dpr$ depends on the multiplicity function $m$.
\end{lettercor}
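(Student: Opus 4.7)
The claim is essentially an immediate consequence of Corollary \ref{corC}, so my plan is simply to unpack why Corollary \ref{corC} separates the two cases.

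Write $\mu = \lambda + \delta_0$. Since $\lambda$ is continuous (indeed, absolutely continuous) and $\delta_0$ is the atomic part, we have $\mu_c = \lambda$ and $\mu_a = \delta_0$. In particular, $\mu_a$ is nonzero and concentrated on $\{0\}$. Apply Corollary \ref{corC} twice:

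\begin{itemize}
\item With $m \equiv 1$, the hypothesis ``$\mu_a$ is concentrated on $\{0\}$ with $m(0)=1$'' holds, so every centralizer of $\Gamma(\lambda+\delta_0,1)\dpr$ is amenable.
\item With $m \equiv 2$, we have $m(0)=2 \neq 1$, and $\mu_a = \delta_0$ is nonzero, so the ``only if'' direction of Corollary \ref{corC} tells us that $\Gamma(\lambda+\delta_0,2)\dpr$ admits at least one faithful normal state whose centralizer is \emph{not} amenable.
\end{itemize}

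To conclude, I would observe that the property ``every centralizer is amenable'' is manifestly an invariant of the von Neumann algebra itself (it refers to all faithful normal states and not to any distinguished one), so it is preserved under any $\ast$-isomorphism. Since the first factor has this property and the second does not, they cannot be isomorphic, proving Corollary \ref{corD}.

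There is no genuine obstacle here; the content of the corollary is entirely contained in Corollary \ref{corC}, and Corollary \ref{corD} is just the advertised consequence that the multiplicity function $m$ genuinely matters for the isomorphism class of $\Gamma(\mu,m)\dpr$ in general.
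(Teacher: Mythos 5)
Your proposal is correct and coincides with the paper's own proof, which likewise deduces Corollary \ref{corD} directly from Corollary \ref{corC} by noting that $\Gamma(\lambda+\delta_0,1)\dpr$ has all centralizers amenable while $\Gamma(\lambda+\delta_0,2)\dpr$ does not, and that this property is an isomorphism invariant. Nothing is missing.
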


Our main technical tool to prove the results mentioned so far is a deformation/rigidity criterion for the unitary conjugacy of two faithful normal states on a von Neumann algebra $M$. In Corollary \ref{cor.key} below, we prove that a corner of the state $\psi$ is unitarily conjugate with a corner of the state $\vphi$ if and only if in the continuous core $\core(M)$, there is a Popa intertwining bimodule (in the sense of \cite{Po02,Po03}) between the canonical subalgebras $\rL_\psi(\R)$ and $\rL_\vphi(\R)$ of $\core(M)$ given by realizing $\core(M)$ as respectively $M \rtimes_{\sigma^\psi} \R$ and $M \rtimes_{\sigma^\vphi} \R$ (see Section \ref{sec.prelim} for details).

More generally, when $P \subset M$ is a von Neumann subalgebra that is the range of a faithful normal conditional expectation $\rE_P : M \recht P$, we provide in Theorem \ref{thm-key} below a deformation/rigidity criterion describing when a state $\psi$ on $M$ has a corner that is unitarily conjugate with a corner of a state of the form $\theta \circ \rE_P$. Applying this criterion to a free product von Neumann algebra $M$, we obtain the following complete characterization of when $M$ has all its centralizers amenable.

\begin{letterthm}\label{thmE}
For $i=1,2$, let $(M_i,\vphi_i)$ be a von Neumann algebra with a faithful normal state. Denote by $(M,\vphi) = (M_1,\vphi_1)\ast (M_2,\vphi_2)$ their free product. Then $M$ has all its centralizers amenable if and only if both $M_1$ and $M_2$ have all their centralizers amenable and $M^\vphi$ is amenable.
\end{letterthm}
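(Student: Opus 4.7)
The direction ``$\Rightarrow$'' is routine: specialising $\psi$ to $\vphi$ gives amenability of $M^\vphi$, while for any faithful normal state $\theta_i$ on $M_i$, the restriction of $\rE_{M_i}$ to $M^{\theta_i \circ \rE_{M_i}}$ is a faithful normal conditional expectation onto $M_i^{\theta_i}$, so amenability of $M^{\theta_i \circ \rE_{M_i}}$ (granted by the assumption on $M$) descends to $M_i^{\theta_i}$.

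For the hard direction ``$\Leftarrow$'', fix a faithful normal state $\psi$ on $M$ and aim at showing that $M^\psi$ is amenable. The plan is to apply Theorem \ref{thm-key} to the inclusions $P = M_i \subset M$ with the free product conditional expectations $\rE_{M_i}$, for $i = 1, 2$. Working in the continuous core $\core(M)$, the criterion produces the following dichotomy: either there exist $i \in \{1,2\}$ and a faithful normal state $\theta$ on $M_i$ with $\rL_\psi(\R) \prec_{\core(M)} \rL_{\theta \circ \rE_{M_i}}(\R)$, in which case a corner of $\psi$ is unitarily conjugate with a corner of the free product state $\theta \ast \vphi_{3-i} = \theta \circ \rE_{M_i}$; or no such intertwining holds for any $i$ and any $\theta$.

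In the first (intertwining) case, a partial isometry in $M$ produces an isomorphism between a corner of $M^\psi$ and a corner of $M^{\theta \ast \vphi_{3-i}}$, reducing the problem to showing amenability of the latter. Since $(M, \theta \ast \vphi_{3-i})$ and $(M, \vphi)$ differ only in the marginal state on one of the free factors, a further application of Theorem \ref{thm-key} compares the associated core subalgebras and, together with the assumed amenability of the marginal centralizer $M_i^{\theta}$ (which is what controls the change of marginal state), transfers amenability from the reference centralizer $M^\vphi$ to $M^{\theta \ast \vphi_{3-i}}$, and hence to the given corner of $M^\psi$.

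In the second (nonintertwining) case, I would combine the failure of Popa intertwining with the free product structure of $\core(M)$ (via the Ueda-type decomposition of cores of free products) and a free absorption argument for free products in the spirit of Houdayer--Ricard and Ioana--Peterson--Popa: any nonamenable subalgebra of $M^\psi$ with expectation would have to Popa-intertwine into some $\rL_{\theta \circ \rE_{M_i}}(\R)$, contradicting the nonintertwining hypothesis, so $M^\psi$ must be amenable. This second case is the main obstacle of the proof: translating a purely core-theoretic Popa intertwining statement into genuine amenability of the centralizer $M^\psi$ inside $M$ is where the deformation/rigidity input of Theorem \ref{thm-key}, the free product decomposition of the core, and the seed amenability of $M^\vphi$ must be combined most delicately.
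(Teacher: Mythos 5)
Your easy direction is fine. For the hard direction the overall strategy (apply Theorem \ref{thm-key} with $P=M_i$) is the right one, but the outline has two genuine gaps at exactly the two places where the real work happens.

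First, your ``second (nonintertwining) case'' is where the proposal breaks down: you propose to rule it out by arguing that any nonamenable corner of $M^\psi$ \emph{would have to} intertwine into the relevant subalgebra of the core, which is precisely the statement you need to prove, so the argument is circular as stated. The actual mechanism is a deformation/rigidity theorem for cores of free products, namely \cite[Theorem 4.3]{HU15}, applied via the following commutation trick. Given $q\in M^\psi$ with $qM^\psi q$ having no amenable direct summand and a finite trace projection $r_0\in\rL_\psi(\R)$, consider $Q=\Pi_{\vphi,\psi}\bigl(\rL_\psi(\R)qr_0\vee qM^\psi q r_0\bigr)$ inside $r\,\core_\vphi(M)\,r$. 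Since $Q$ contains a copy of $qM^\psi q$, it has no amenable direct summand, so either $Q'\cap r\,\core_\vphi(M)\,r\prec\rL_\vphi(\R)$ or $Q\prec\core_\vphi(M_i)$; and because $\Pi_{\vphi,\psi}(\rL_\psi(\R)qr_0)$ lies in \emph{both} $Q$ and $Q'\cap r\,\core_\vphi(M)\,r$, either branch forces $\Pi_{\vphi,\psi}(\rL_\psi(\R)qr_0)\prec\core_\vphi(M_i)$ for some $i$. Only after this does Theorem \ref{thm-key} enter. Nothing in your outline supplies this step; ``free absorption in the spirit of Houdayer--Ricard'' is not the relevant input, and note also that the correct intertwining target is $\core_\vphi(M_i)$, not a copy of $\rL(\R)$ attached to some state.

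Second, once Theorem \ref{thm-key} gives a corner of $\psi$ unitarily conjugate to a corner of $\theta\circ\rE_{M_i}$, your plan to ``transfer amenability from $M^\vphi$ to $M^{\theta\circ\rE_{M_i}}$ by a further application of Theorem \ref{thm-key}'' is not a proof. (A side error: $\theta\circ\rE_{M_i}$ is \emph{not} the free product state $\theta\ast\vphi_{3-i}$ in general, since $M_1$ and $M_2$ are free only with respect to $\vphi$.) What is needed is a direct structural analysis of the centralizer $M^{\theta\circ\rE_{M_i}}$: split off the part of the cocycle $u_t=[\rD\theta:\rD\vphi_i]_t$ carried by eigenvectors, and use weak mixing of the remaining part on the alternating-word subspaces of the free product Hilbert space to show that $M^{\theta\circ\rE_{M_i}}$ decomposes into a corner equal to a corner of $M_i^{\theta}$ plus corners unitarily conjugate (with the correct scaling of states) to corners of $M^\vphi$. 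This is Lemma \ref{lem.centr-free-product}, and it is the second essential ingredient your outline omits; only with it do the hypotheses that $M_i$ has all centralizers amenable and that $M^\vphi$ is amenable combine to give amenability of $M^\psi$ (see Proposition \ref{prop.nonamen-centr-free-product}).
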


Finally, we use the same criterion, in combination with methods of \cite{Sh97a}, to prove the following classification result for a free product with a free Araki--Woods factor.

\begin{letterthm}\label{thmF}
Let $\mu$ be a continuous finite symmetric Borel measure on $\R$. Fix the free Araki--Woods factor $(M,\vphi) = (\Gamma(\mu,+\infty)\dpr , \vphi_{\mu,+\infty})$, where $+\infty$ denotes the multiplicity function equal to $+\infty$ everywhere.
\begin{itemize}
\item [$(\rm i)$] If $(A,\tau)$ and $(B,\tau)$ are nonamenable ${\rm II_1}$ factors with their tracial states, then the free products $(M,\vphi) \ast (A,\tau)$ and $(M,\vphi) \ast (B,\tau)$ are isomorphic (not necessarily in a state preserving way) if and only if there exists a $t > 0$ such that $A \cong B^t$.

\item [$(\rm ii)$] If $(A_i,\psi_i)$, $i=1,2$, are full type ${\rm III}$ factors with almost periodic states having a factorial centralizer $A_i^{\psi_i}$, then the free products $(M,\vphi) \ast (A_1,\psi_1)$ and $(M,\vphi) \ast (A_2,\psi_2)$ are isomorphic (again, not necessarily in a state preserving way) if and only if $A_1 \cong A_2$.
\end{itemize}
\end{letterthm}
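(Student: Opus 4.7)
The ``if'' direction in both parts rests on the strong stability properties of $M = \Gamma(\mu,+\infty)\dpr$. Since $\mu$ is continuous, $M$ is a factor of type ${\rm III}_1$, and the fact that the multiplicity function is $+\infty$ makes $(M,\vphi)$ absorbent under the kinds of compressions that appear in free-product constructions. Concretely, as in the techniques of \cite{Sh97a}, for any projection $p$ in a ${\rm II}_1$ factor freely complementary to $M$, there is a natural isomorphism $p\bigl((M,\vphi) \ast (A,\tau)\bigr)p \cong (M,\vphi) \ast (A^{1/\tau(p)},\tau)$ (after adjusting $\vphi$ by an automorphism of $M$, of which there are plenty since $M$ is type ${\rm III}_1$). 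This yields the ``if'' direction of (i). For (ii), one additionally uses that any two almost periodic faithful normal states on the given $A_i$ can be absorbed via automorphisms of $M$ acting on the free-product state, which again exist in abundance because $M$ has rich modular automorphism group.

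For the ``only if'' direction, fix an arbitrary $\ast$-isomorphism $\pi : N_1 \to N_2$ where $N_i = (M,\vphi) \ast (A_i,\psi_i)$ (with $(A_1,\psi_1)=(A,\tau)$, $(A_2,\psi_2)=(B,\tau)$ in case (i), and the obvious labelling in case (ii)), and set $\omega_i = \vphi \ast \psi_i$ and $\omega = \omega_2 \circ \pi$ on $N_1$. The plan is to apply Theorem \ref{thm-key} to the conditionally expected inclusion $A_1 \subset N_1$. The theorem yields a dichotomy: either \textbf{(a)} a corner of $\omega$ is unitarily conjugate to a corner of a state of the form $\theta \circ \rE_{A_1}$ for some faithful normal state $\theta$ on $A_1$, or \textbf{(b)} no such intertwining exists in $\core(N_1)$. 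The key input to exclude (b) is that $M$ has all centralizers amenable (Corollary \ref{corC}), while $A_1$ is nonamenable (case (i)) or has a factorial, nonamenable centralizer (case (ii)): any failure of (a) would, after unpacking via the free-product structure of $N_1$, force a nonamenable piece of $\pi^{-1}(A_2)$ to intertwine into $M$, contradicting amenability of $M^\vphi$ and the standard free-absorption principles used already in \cite{Sh97a}.

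Once (a) is established, transporting along $\pi$ gives that a corner of $\omega_2$ is unitarily conjugate to a corner of a state of the form $\theta' \circ \rE_{\pi(A_1)}$ in $N_2$. Running the symmetric argument with $\pi^{-1}$ and combining with the mixingness of the free product $N_2 = M \ast A_2$ with respect to $\omega_2$ (which localizes the nonamenable / non-almost-periodic part of the centralizer of $\omega_2$ inside $A_2$), we obtain a Popa intertwining of $\pi(A_1)$ into $A_2$ inside $N_2$, and vice versa. Standard manipulations then upgrade this to: after compressing by matching projections and conjugating by a unitary, $\pi$ identifies $A_1$ with a corner of $A_2$. In case (i) this amounts to $A_1 \cong A_2^t$ for some $t>0$; in case (ii) the factoriality of $A_i^{\psi_i}$ together with the hypothesis that $\psi_i$ is almost periodic rigidifies this identification, forcing $t=1$ and hence $A_1 \cong A_2$.

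The main obstacle, and the place where one must work hardest, is the exclusion of alternative (b) in the second paragraph: translating the deformation/rigidity dichotomy of Theorem \ref{thm-key} into a meaningful statement about the free-product structure of $N_i$, and then using amenability of all centralizers of $M$ to push all nonamenable (or modular-rigid) phenomena onto the $A_i$-side. The promotion of the resulting Popa intertwining to a genuine isomorphism between $A_1$ and $A_2$ (up to amplification in case (i)) is then a variant of the free-absorption arguments pioneered in \cite{Sh97a}, and should be adaptable to the two cases with care given to the distinction between tracial and almost periodic states.
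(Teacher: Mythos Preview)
Your architecture is right—the ``if'' direction via compression formulas (Proposition~\ref{prop.corner-free-product-aw}), the ``only if'' by pushing nonamenable centralizers onto the $A_i$-side via Theorem~\ref{thm-key} and Corollary~\ref{corC}—but there are two genuine gaps.

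First, Theorem~\ref{thm-key} is not a dichotomy: it is an \emph{equivalence} between a core-level intertwining and a state-level conjugacy, and you still need an independent source for the core-level intertwining. The paper supplies this through \cite[Theorem~4.3]{HU15}, which gives a trichotomy in $\core_\vphi(N_1)$: the copy of $\rL_\psi(\R)$ (which commutes with a nonamenable subalgebra) must intertwine into $\rL_\vphi(\R)$, into $\core_\vphi(M)$, or into $\core_\vphi(A_1)$. Theorem~\ref{thm-key} then translates each branch; the $\core_\vphi(M)$-branch dies by Corollary~\ref{corC}, and both remaining branches land inside $A_1$ because $\vphi$ is weakly mixing (this localization of the almost periodic part is the content of Lemma~\ref{lem.centr-free-product}; see Proposition~\ref{prop.nonamen-centr-free-product}). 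Without the \cite{HU15} input, your exclusion of alternative~(b) has no mechanism. Also, upgrading the resulting corner conjugacy to an honest isomorphism $eA_1e \cong qA_2q$ is not a Popa intertwining argument (that would only give a homomorphism of a corner); it requires a genuine two-sided pass comparing almost periodic parts of the transported state on each side, which is Proposition~\ref{prop.rigidity-free-products}.

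Second, for the ``if'' direction in~(ii), automorphisms of $M$ are irrelevant: they cannot adjust the state on $A_2$. What is actually needed is Connes' uniqueness of almost periodic weights on full factors \cite[Theorem~4.7 and Lemma~4.8]{Co74}: given an isomorphism $A_1 \cong A_2$, after stabilizing by $\rB(\ell^2(\N))$ the two almost periodic weights become unitarily conjugate up to a positive scalar, and the factoriality of $A_i^{\psi_i}$ then allows one to compress to a state-preserving isomorphism of corners, after which Proposition~\ref{prop.corner-free-product-aw} applies. (Incidentally, in your compression formula for~(i) the exponent should be $\tau(p)$, not $1/\tau(p)$.)
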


By Theorem \ref{thmF}, the free Araki--Woods factors $\Gamma((\lambda,+\infty) + (\delta_0,m))\dpr$ are isomorphic for all $2 \leq m < +\infty$, but the question whether these are isomorphic with $\Gamma((\lambda,+\infty) + (\delta_0,+\infty))\dpr$ is equivalent with the free group factor problem $\rL(\F_m) \cong^? \rL(\F_\infty)$.

{\bf Acknowledgment.} We are grateful to the Mittag-Leffler Institute for their hospitality during the program {\it Classification of operator algebras: complexity, rigidity, and dynamics}, where part of the work on this paper was done.

{\bf Disclaimer.} Some of the results in this paper, in particular Corollary \ref{corD}, Theorem \ref{thm.isom} and Proposition \ref{prop.core-lambda-plus-delta-0}, were obtained in the preprint \cite{Sh03} by the second named author. That preprint will remain unpublished and has been incorporated in this article.


{\setlength{\parskip}{0.3ex}
\tableofcontents}

\section{Preliminaries}\label{sec.prelim}

For any von Neumann algebra $M$, we denote by $\mathcal U(M)$ its group of unitaries. For any (possibly unbounded) positive selfadjoint closed operator $A$ on a separable Hilbert space $H$, we denote by $\mathcal C(A)$ the measure class on $\R$ of the spectral measure of $\log(A)$, i.e.\ the set of all Borel sets $\cU \subset \R$ such that the spectral projection $1_{\cU}(\log(A))$ equals $0$. Here $1_{\cU}$ denotes the function that is equal to $1$ on $\cU$ and equal to $0$ elsewhere. Note that we can always choose a measure $\mu$ on $\R$ such that $\cC(A) = \cC(\mu)$.

\subsection*{Free Araki--Woods factors}

Following \cite{Sh96}, we associate to every orthogonal representation $(U_t)_{t \in \R}$ of $\R$ on the real Hilbert space $H_\R$ the free Araki--Woods factor $\Gamma(H_\R,U_t)\dpr$, equipped with the free quasi-free state $\vphi_U$.

Denoting by $H = H_\R + {\rm i} H_\R$ the complexification of $H_\R$, define the positive nonsingular operator $\Delta$ on $H$ such that $U_t = \Delta^{\ri t}$ for all $t \in \R$. Also define the anti-unitary operator $J : H \recht H : J(\xi + {\rm i} \eta) = \xi - {\rm i} \eta$ for all $\xi,\eta \in H_\R$. Then, $J \Delta J = \Delta^{-1}$. Therefore, the measure class $\mathcal C(\Delta)$ of the spectral measure of $\log(\Delta)$ and the multiplicity function $m : \R \recht \N \cup \{+\infty\}$ of $\log(\Delta)$ are symmetric. This measure class and multiplicity function completely classify orthogonal representations of $\R$ on separable real Hilbert spaces. We therefore use the notation $(\Gamma(\mu,m)\dpr,\vphi_{\mu,m})$ to denote the free Araki--Woods factor and its free quasi-free state associated with the unique orthogonal representation with spectral invariant $(\mu,m)$.

\subsection*{Background on $\sigma$-finite von Neumann algebras}

Let $M$ be any $\sigma$-finite von Neumann algebra with predual $M_\ast$ and $\varphi \in M_\ast$ any faithful normal state. We denote by $\sigma^\varphi$ the modular automorphism group of the state  $\varphi$ defined by the formula $\sigma_t^\varphi = \Ad(\Delta_\varphi^{{\rm i}t})$ for all $t \in \R$.  The {\em centralizer} $M^\varphi$ of the state $\varphi$ is by definition the fixed point algebra of $(M, \sigma^\varphi)$. The {\em continuous core} of $M$ with respect to $\varphi$, denoted by $\core_\varphi(M)$, is the crossed product von Neumann algebra $M \rtimes_{\sigma^\varphi} \R$. The natural inclusion $\pi_\varphi: M \to \core_\varphi(M)$ and the unitary representation $\lambda_\varphi: \R \to \core_\varphi(M)$ satisfy the {\em covariance} relation
$$
  \lambda_\varphi(t) \pi_\varphi(x) \lambda_\varphi(t)^*
  =
  \pi_\varphi(\sigma^\varphi_t(x))
  \quad
  \text{ for all }
  x \in M \text{ and all } t \in \R.
$$
Put $\rL_\varphi (\R) := \lambda_\varphi(\R)\dpr$. There is a unique faithful normal conditional expectation $\rE_{\rL_\varphi (\R)}: \core_{\varphi}(M) \to \rL_\varphi(\R)$ satisfying $\rE_{\rL_\varphi (\R)}(\pi_\varphi(x) \lambda_\varphi(t)) = \varphi(x) \lambda_\varphi(t)$ for all $x \in M$ and all $t \in \R$. The faithful normal semifinite weight defined by $f \mapsto \int_{\R} \exp(-s)f(s) \, \rd s$ on $\rL^\infty(\R)$ gives rise to a faithful normal semifinite weight $\Tr_\varphi$ on $\rL_\varphi(\R)$ via the Fourier transform. The formula $\Tr_\varphi = \Tr_\varphi \circ \rE_{\rL_\varphi (\R)}$ extends it to a faithful normal semifinite trace on $\core_\varphi(M)$.

Because of Connes' Radon--Nikodym cocycle theorem \cite[Th\'eor\`eme 1.2.1]{Co72} (see also \cite[Theorem VIII.3.3]{Ta03}), the semifinite von Neumann algebra $\core_\varphi(M)$ together with its trace $\Tr_\varphi$ does not depend on the choice of $\varphi$ in the following precise sense. If $\psi \in M_\ast$ is another faithful state, there is a canonical isomorphism
$\Pi_{\varphi,\psi} : \core_\psi(M) \to \core_{\varphi}(M)$ of $\core_\psi(M$ onto $\core_{\varphi}(M)$ such that $\Pi_{\varphi,\psi} \circ \pi_\psi = \pi_\varphi$ and $\Tr_\varphi \circ \Pi_{\varphi,\psi} = \Tr_\psi$. Note however that $\Pi_{\varphi,\psi}$ does not map the subalgebra $\rL_\psi(\R) \subset \core_\psi(M)$ onto the subalgebra $\rL_\varphi(\R) \subset \core_\varphi(M)$ (and hence we use the symbol $\rL_\varphi(\R)$ instead of the usual $\rL(\R)$). We have $\Pi_{\varphi, \psi}(\lambda_\psi(t)) = \pi_\varphi(w_t) \lambda_\varphi(t)$ for every $t \in \R$, where $w_t = [\rD \psi : \rD \varphi]_t$ is Connes' Radon--Nikodym cocycle between $\psi$ and $\varphi$.

\begin{lem}\label{lem-measure}
Let $M$ be any $\sigma$-finite von Neumann algebra and $\varphi \in M_\ast$ any faithful state such that $M^\varphi$ is a ${\rm II_1}$ factor. Let $M^\varphi \subset P \subset M$ be any intermediate von Neumann subalgebra that is globally invariant under the modular automorphism group $\sigma^\varphi$. Denote by $\rE_P : M \to P$ the unique $\varphi$-preserving conditional expectation and write $M \ominus P := \ker(\rE_P)$. Let $p \in M^\varphi$ be any nonzero projection and put $\varphi_p := \frac{\varphi(p \, \cdot \, p)}{\varphi(p)} \in (pMp)_\ast$.

Then we have
$$\mathcal C(\Delta_\varphi) = \mathcal C(\Delta_{\varphi_p}) \quad \text{and} \quad \mathcal C(\Delta_\varphi |_{\rL^2(M) \ominus \rL^2(P)}) = \mathcal C(\Delta_{\varphi_p} |_{\rL^2(pMp) \ominus \rL^2(pPp)}).$$
\end{lem}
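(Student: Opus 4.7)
The plan is to realize $\Delta_{\varphi_p}$ as the restriction of $\Delta_\varphi$ to the invariant subspace $\rL^2(pMp) \subseteq \rL^2(M)$, and then transfer measure-class information using that the ${\rm II}_1$ factor $M^\varphi$ (and its image $JM^\varphi J$) acts ``fully'' via the nonzero projection $p$.

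Let $\Omega \in \rL^2(M)$ denote the cyclic vector for $\varphi$, and let $J$ be the modular conjugation. The hypothesis $p \in M^\varphi$ gives $\sigma^\varphi_t(p) = p$ and $Jp\Omega = p\Omega$, so $p$ and $JpJ$ both commute with $\Delta_\varphi$; consequently the subspace $\rL^2(pMp) = pJpJ\,\rL^2(M)$ is $\Delta_\varphi$-invariant, the vector $\xi_p := \varphi(p)^{-1/2}\,p\Omega$ is cyclic and separating for $pMp$ on it, and the Tomita operator $S_{\varphi_p}$ is the restriction of $S_\varphi$. Polar decomposition then identifies $\Delta_{\varphi_p}$ with $\Delta_\varphi|_{\rL^2(pMp)}$. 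Since every spectral projection $e$ of $\log\Delta_\varphi$ commutes with $M^\varphi \vee JM^\varphi J$, proving $\mathcal C(\Delta_\varphi) = \mathcal C(\Delta_{\varphi_p})$ reduces to the implication $e|_{\rL^2(pMp)} = 0 \Rightarrow e = 0$. This follows from the equality $(M^\varphi \vee JM^\varphi J)\cdot\rL^2(pMp) = \rL^2(M)$, which in turn follows from comparison of projections in the ${\rm II}_1$ factor $M^\varphi$: there exist $w_i \in M^\varphi$ with $w_i^* w_i \leq p$ and $\sum_i w_i w_i^* = 1$, and symmetrically for $JM^\varphi J$ via $JpJ$.

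For the second equality I use that $\rL^2(P)$ is $\Delta_\varphi$-invariant (by $\sigma^\varphi$-invariance of $P$), that the orthogonal projection $e_P$ onto $\rL^2(P)$ commutes with $M^\varphi$ (by the $P$-bimodule property of $\rE_P$, since $M^\varphi \subseteq P$) and with $JM^\varphi J$ (because $\rL^2(P)$ is stable under the right action of $P$, implemented by $JPJ \supseteq JM^\varphi J$), and that $e_P|_{\rL^2(pMp)} = e_{pPp}$ via the identity $\rE_P(pxp) = p\rE_P(x)p = \rE_{pPp}(pxp)$. Hence $1-e_P$ commutes with $M^\varphi \vee JM^\varphi J$ and maps $\rL^2(pMp)$ onto $\rL^2(pMp)\ominus\rL^2(pPp)$; applying $1-e_P$ to any expression of $\xi \in \rL^2(M)\ominus\rL^2(P)$ as a (limit of) linear combinations $\sum_k a_k \eta_k b_k$ with $a_k\in M^\varphi$, $b_k\in JM^\varphi J$, $\eta_k \in \rL^2(pMp)$ yields the corresponding expression with $(1-e_P)\eta_k \in \rL^2(pMp)\ominus\rL^2(pPp)$; repeating the spectral-projection argument of the previous paragraph then gives the second equality. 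The most delicate point is the commutation of $e_P$ with $JM^\varphi J$, which reduces to showing $JPJ$ preserves $\rL^2(P)$; this uses $\sigma^\varphi$-invariance of $P$ together with Tomita's formula $Jx\Omega = \sigma^\varphi_{i/2}(x^*)\Omega$ for analytic $x \in P$.
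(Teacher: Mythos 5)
Your proof is correct, and it rests on the same key mechanism as the paper's: since $M^\varphi$ is a ${\rm II_1}$ factor, comparison of projections produces partial isometries in $M^\varphi$ carrying $p$ (or subprojections of $p$) onto a partition of $1$, and conjugating by these does not disturb the spectral data of $\Delta_\varphi$. The difference is in the packaging. The paper works entirely with the scalar spectral measures $\mu^\varphi_x$ determined by $\varphi(x^*\sigma_t^\varphi(x))$, shrinks $p$ so that $\varphi(p)=1/m$, and verifies the single identity $\mu^\varphi_x = \varphi(p)\sum_{i,j}\mu^{\varphi_p}_{u_j^*xu_i}$, which handles both asserted equalities at once (the second because $u_j^*xu_i \in p(M\ominus P)p$ when $x \in M\ominus P$); no discussion of the standard form, of $J$, or of the Jones projection $e_P$ is needed. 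You instead work spatially: you identify $\Delta_{\varphi_p}$ with $\Delta_\varphi|_{pJpJ\rL^2(M)}$, observe that spectral projections of $\log\Delta_\varphi$ commute with $M^\varphi\vee JM^\varphi J$, and reduce everything to the density of $(M^\varphi\vee JM^\varphi J)\,\rL^2(pMp)$ in $\rL^2(M)$. This is a clean and arguably more conceptual route, but it obliges you to check several auxiliary facts the paper sidesteps — that $S_{\varphi_p}$ is the restriction of $S_\varphi$, that $e_P$ commutes with $JM^\varphi J$ (via Tomita's formula and the $\sigma^\varphi$-invariance of $P$), and that $e_P|_{\rL^2(pMp)} = e_{pPp}$ — all of which you identify and justify correctly. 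Both arguments are complete; the paper's is shorter because the measures $\mu^\varphi_x$ encode exactly the spectral information needed and nothing more.
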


\begin{proof}
Fix a standard representation $M \subset \rB(H)$ and denote by $\xi_\vphi \in H$ the canonical unit vector that implements $\vphi$. For every $x \in M$, denote by $\mu^\varphi_{x}$ the unique finite Borel measure on $\R$ that satisfies
$$\varphi(x^*\sigma_t^\varphi(x)) = \langle \Delta_\varphi^{{\rm i}t} (x\xi_\varphi), x\xi_\varphi\rangle = \int_{\R} \exp({\rm i}st) \, \rd \mu_x^\varphi(s) \quad \text{for all } t\in \R.$$
For any Borel subset $U \subset \R$, we have $U \in \mathcal C(\Delta_\varphi)$ (resp.\ $U \in \mathcal C(\Delta_\varphi |_{\rL^2(M) \ominus \rL^2(P)})$) if and only if $\mu_x^\varphi(U) = 0$ for all $x \in M$ (resp.\ for all $x \in M \ominus P$). Since $pMp \subset M$ and $p(M \ominus P)p \subset M \ominus P$, it is clear that $\mathcal C(\Delta_{\varphi}) \subset \mathcal C(\Delta_{\varphi_p})$ (resp.\ $\mathcal C(\Delta_\varphi |_{\rL^2(M) \ominus \rL^2(P)}) \subset \mathcal C(\Delta_{\varphi_p} |_{\rL^2(pMp) \ominus \rL^2(pPp)})$). It remains to prove that $\mathcal C(\Delta_{\varphi_p}) \subset \mathcal C(\Delta_{\varphi})$ (resp.\ $C(\Delta_{\varphi_p} |_{\rL^2(pMp) \ominus \rL^2(pPp)}) \subset \mathcal C(\Delta_{\varphi} |_{\rL^2(M) \ominus \rL^2(P)})$).

Up to shrinking $p \in M^\varphi$ if necessary and since we have $p_2 M p_2 \subset p_1 M p_1$ and $p_2 (M \ominus P) p_2 \subset p_1 (M \ominus P)p_1$ whenever $p_2 \leq p_1$ with $p_1, p_2$ nonzero projections in $M^\varphi$, we may assume without loss of generality that $\varphi(p) = m^{-1}$ with $m \in \N$. Since $M^\varphi$ is a ${\rm II_1}$ factor, we may find partial isometries $u_1, \dots , u_m \in M^\varphi$ such that $u_1 = p$, $u_j^*u_j = p$ for all $1 \leq j \leq m$ and $\sum_{j = 1}^m u_j u_j^* = 1$.

Let $x \in M$ (resp.\ $x \in M \ominus P$). Since $\varphi(x^*\sigma_t^\varphi(x)) = \varphi(p)\sum_{i, j = 1}^m \varphi_p((u_j^*x u_i)^*\sigma_t^{\varphi_p}(u_j^*xu_i))$ for all $t \in \R$, we have $\mu_x^\varphi = \varphi(p)\sum_{i, j = 1}^m \mu_{u_j^* xu_i}^{\varphi_p}$. If $x \in M \ominus P$, then $u_j^* x u_i \in p(M \ominus P)p$ for all $1\leq i, j \leq m$. This implies that $\mathcal C(\Delta_{\varphi_p}) \subset \mathcal C(\Delta_\varphi)$ (resp.\ $\mathcal C(\Delta_{\varphi_p} |_{\rL^2(pMp) \ominus \rL^2(pPp)}) = \mathcal C(\Delta_{\varphi} |_{\rL^2(M) \ominus \rL^2(P)})$) and finishes the proof.
\end{proof}

\subsection*{Popa's intertwining-by-bimodules}

Popa introduced his {\em intertwining-by-bimodules} theory in \cite{Po02, Po03}. In the present work, we make use of this theory in the context of semifinite von Neumann algebras. We introduce the following terminology. Let $M$ be any $\sigma$-finite semifinite von Neumann algebra endowed with a fixed faithful normal semifinite trace $\Tr$. Let $1_A$ and $1_B$ be any nonzero projections in $M$
and let $A\subset 1_AM1_A$ and $B\subset 1_BM1_B$ be any von Neumann subalgebras. Assume that $\Tr(1_A) < +\infty$ and that $\Tr|_{B}$ is semifinite.

We say that $A$ {\em embeds into} $B$ {\em inside} $M$ and write $A \prec_M B$ if there exist a projection $e \in A$, a finite trace projection $f \in B$, a nonzero partial isometry $v \in eMf$ and a unital normal homomorphism $\theta : eAe \to fBf$ such that $av = v \theta(a)$ for all $a \in eAe$. We use the following useful characterization \cite{Po02, Po03} (see also \cite[Lemma 2.2]{HR10}).

\begin{thm}\label{thm-intertwining}
Keep the same notation as above. Denote by $\rE_B : 1_BM1_B \to B$ the unique trace preserving conditional expectation. Then the following conditions are equivalent.
\begin{itemize}
\item [$(\rm i)$] $A \prec_M B$.
\item [$(\rm ii)$] There exists no net $(w_i)_{i \in I}$ of unitaries in $\mathcal U(A)$ such that $\lim_i \|\rE_{B}(y^*w_i x)\|_2 =  0$ for all $x,y\in 1_AM1_B$.
\end{itemize}
\end{thm}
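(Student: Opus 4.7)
My plan is to establish the equivalence by following Popa's classical intertwining-by-bimodules argument \cite{Po02,Po03}, adapted to the semifinite setting exactly as in \cite[Lemma 2.2]{HR10}. The central technical object is the Jones basic construction $\langle 1_A M 1_A, e_B\rangle$ associated with the inclusion $B \subset 1_B M 1_B$, equipped with its canonical semifinite trace $\widehat\Tr$ characterized by $\widehat\Tr(x e_B y) = \Tr(xy)$ for $x, y \in 1_A M 1_B$; the hypotheses $\Tr(1_A) < +\infty$ and $\Tr|_B$ semifinite are precisely what guarantee that the positive elements constructed below have finite $\widehat\Tr$-trace.

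For the direction (i) $\Rightarrow$ (ii), I would argue directly. Given intertwining data $(e,f,v,\theta)$ with $av = v\theta(a)$ for $a\in eAe$, taking adjoints yields $v^*a = \theta(a)v^*$, whence $v^* w v = \theta(ewe)\,v^*v$ for every $w\in\mathcal{U}(A)$; applying $\rE_B$ and using $B$-bimodularity gives
\[
  \rE_B(v^* w v) = \theta(ewe)\,q, \qquad q := \rE_B(v^*v) \in fBf,\ q \ge 0,\ \Tr(q) = \Tr(v^*v) > 0.
\]
A routine calculation exploiting the unitarity identity $(ewe)^*(ewe) = e - ew^*(1-e)we$ together with the weak-$*$ compactness of the unit ball of $eAe$ and the normality of $\theta$ then shows that no net $(w_i)\subset\mathcal{U}(A)$ can simultaneously drive $\|\rE_B(y^* w_i x)\|_2$ to $0$ for all $x,y \in 1_A M 1_B$; it suffices to test against $x,y$ of the form $vb, vc$ with $b,c$ running over spectral projections of $q$ and to use that $\Tr(q) > 0$.

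For the direction (ii) $\Rightarrow$ (i), the substantive half, I plan the following. By a standard convexity manipulation, I upgrade the negation of (ii) to: there exist $x_1,\dots,x_n \in 1_A M 1_B$ and $\delta > 0$ such that $\sum_{j=1}^n \|\rE_B(x_j^* w x_j)\|_2^2 \ge \delta$ for every $w \in \mathcal{U}(A)$. Setting $T := \sum_j x_j e_B x_j^* \in \langle 1_A M 1_A, e_B\rangle$, one has $\widehat\Tr(T) = \sum_j \Tr(x_j x_j^*) < +\infty$ (using $\Tr(1_A) < +\infty$) and $\widehat\Tr(wTw^*\,T) \ge \delta$ for every $w \in \mathcal{U}(A)$. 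I then take the unique element $T_0$ of minimal $\|\cdot\|_{2,\widehat\Tr}$-norm in the $\|\cdot\|_{2,\widehat\Tr}$-closed convex hull of $\{wTw^* : w \in \mathcal{U}(A)\}$; the lower bound on $\widehat\Tr(wTw^*\,T)$ forces $T_0 \ne 0$, while uniqueness of the minimizer under unitary averaging forces $T_0 \in A' \cap \langle 1_A M 1_A, e_B\rangle$. A nonzero spectral projection $p$ of $T_0$ of finite $\widehat\Tr$-trace then lies in $A' \cap \langle 1_A M 1_A, e_B\rangle$, and the standard Jones basic construction polar-decomposition step extracts from $p$ the projections $e\in A$, $f\in B$, a partial isometry $v\in eMf$, and a unital normal $*$-homomorphism $\theta:eAe\to fBf$ satisfying $av = v\theta(a)$, i.e.\ $A \prec_M B$.

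The main obstacle is purely technical: one must verify that the Jones basic construction and its canonical semifinite trace $\widehat\Tr$ behave correctly in the present mixed-corner semifinite setting, in particular that the trace identity $\widehat\Tr(xe_By) = \Tr(xy)$ extends to all bounded $x,y\in 1_A M 1_B$, and that finite-$\widehat\Tr$-trace projections in $A'\cap\langle 1_A M 1_A, e_B\rangle$ correspond to closed $A$-$B$ sub-bimodules of $1_A\rL^2(M)1_B$ of finite right $B$-dimension, from which the partial isometry $v$ is extracted. These verifications are routine extensions of the finite-trace arguments in \cite{Po02,Po03} and are spelled out in \cite{HR10}.
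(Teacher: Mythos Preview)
The paper does not prove this theorem: it is stated as the semifinite version of Popa's intertwining criterion and simply cited from \cite{Po02,Po03} and \cite[Lemma~2.2]{HR10}. Your sketch is precisely the standard argument from those references, and the substantive direction (ii)$\Rightarrow$(i) --- building $T=\sum_j x_j e_B x_j^*$ in the basic construction, averaging over $\mathcal U(A)$ to obtain a nonzero $A$-central element of finite $\widehat\Tr$-trace, and extracting $(e,f,v,\theta)$ from a spectral projection --- is correctly outlined.

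There is, however, a small but genuine gap in your sketch of (i)$\Rightarrow$(ii). You claim it suffices to test against $x,y$ of the form $vb,vc$. This fails when $e\neq 1_A$: take $A=B=M=M_2(\C)$ with its trace, $e=f=e_{11}$, $v=e_{11}$, $\theta=\id$, and $w=\begin{pmatrix}0&1\\1&0\end{pmatrix}\in\mathcal U(A)$; then $ewe=0$, so $\rE_B((vc)^*w(vb))=c^*\theta(ewe)qb=0$ for every $b,c$, yet of course $A\prec_M B$. The invocation of weak-$*$ compactness and normality of $\theta$ does not rescue this, since $\theta(ew_ie)$ can genuinely tend to $0$ strongly while the $w_i$ remain unitary. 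The standard fix is to enlarge the test family: choose partial isometries $u_1,\dots,u_n\in A$ with $u_ku_k^*\le e$ and $\sum_k u_k^*u_k=1_A$ (possible after shrinking $e$, using $\Tr(1_A)<\infty$), and test against $x_k=u_k^*v$; then $\sum_{j,k}\|\rE_B(x_j^*wx_k)\|_2^2=\Tr\bigl(q\,\theta(\sum_k u_ku_k^*)\,q\bigr)$ is a positive constant independent of $w$. Equivalently, one observes directly in the basic construction that $T=ve_Bv^*$ commutes with $eAe$, producing a nonzero finite-$\widehat\Tr$ projection in $A'\cap 1_A\langle M,e_B\rangle 1_A$, which is incompatible with the existence of the asymptotically $B$-orthogonal net.
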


\section{A criterion for the unitary conjugacy of states}

Recall that for any von Neumann algebra $N$, any $\theta \in N_\ast$ and any $a, b \in N$, we define $(a\theta b)(y) := \theta(b y a)$ for every $y \in N$.

\begin{thm}\label{thm-key}
Let $M$ be a von Neumann algebra with a faithful normal state $\vphi \in M_*$ and $P \subset M$ a von Neumann subalgebra that is the range of a $\vphi$-preserving conditional expectation $\rE_P : M \recht P$. Let $\psi \in M_*$ be another faithful normal state and $q \in M^\psi$ a nonzero projection. Then the following statements are equivalent.
\begin{itemize}
\item [$(\rm i)$] There exists a nonzero finite trace projection $r \in \rL_\psi(\R)$ such that $$\Pi_{\varphi, \psi}(\rL_\psi(\R)qr) \prec_{\core_\varphi(M)} \core_\varphi(P) \; .$$
\item [$(\rm ii)$] There exist a faithful normal positive functional $\theta \in P_*$ and a nonzero partial isometry $v \in M$ such that $p = vv^* \in M^{\theta \circ \rE_P}$, $q_0 = v^* v \in q M^\psi q$ and $\psi q_0 = v^* (\theta \circ \rE_P) v$.
\end{itemize}
\end{thm}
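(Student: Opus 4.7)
My plan is to prove (ii)$\Rightarrow$(i) by an explicit Connes--cocycle computation and (i)$\Rightarrow$(ii) by extracting a partial isometry in $M$ from a core intertwiner via the Takesaki dual $\R$-action.

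For (ii)$\Rightarrow$(i): The crucial remark is that since $\rE_P$ is $\varphi$-preserving, the Connes cocycle $w'_t := [\rD(\theta \circ \rE_P) : \rD\varphi]_t$ lies in $P$ (by the chain rule it coincides with $[\rD\theta : \rD(\varphi|_P)]_t$). Hence $\sigma^{\theta \circ \rE_P}$ globally preserves $P$, $\sigma^{\theta \circ \rE_P}|_P = \sigma^\theta$, and $\Pi_{\varphi,\theta \circ \rE_P}(\lambda_{\theta \circ \rE_P}(t)) = w'_t \lambda_\varphi(t)$ lies in $\core_\varphi(P)$. Applying to the identity $\psi q_0 = v^*(\theta \circ \rE_P) v$ the Connes--Radon--Nikodym formula $[\rD(v^*\omega v) : \rD\omega]_t = v^* \sigma^\omega_t(v)$ (valid because $vv^* \in M^{\theta \circ \rE_P}$) together with $[\rD(\psi q_0) : \rD \psi]_t = q_0$ (valid because $q_0 \in M^\psi$), the chain rule yields
\[
  q_0 \, [\rD\psi : \rD(\theta \circ \rE_P)]_t \;=\; v^* \, \sigma^{\theta \circ \rE_P}_t(v) \qquad (t \in \R).
\]
Combining this with $[\rD\psi : \rD\varphi]_t = [\rD\psi : \rD(\theta \circ \rE_P)]_t \cdot w'_t$ and $\sigma^{\theta \circ \rE_P}_t = \Ad(w'_t) \circ \sigma^\varphi_t$, and using $vq = v$, a short rearrangement in $\core_\varphi(M)$ produces the key identity
\[
  \Pi_{\varphi,\psi}(q \lambda_\psi(t)) \, v^* \;=\; v^* \, \Pi_{\varphi, \theta \circ \rE_P}(\lambda_{\theta \circ \rE_P}(t)) \qquad (t \in \R).
\]
Cutting both sides by a finite trace spectral projection $r \in \rL_\psi(\R)$ and applying Theorem \ref{thm-intertwining} to the partial isometry $v^*$ together with the $*$-homomorphism determined on generators by $q \lambda_\psi(t) \mapsto \Pi_{\varphi,\theta\circ \rE_P}(\lambda_{\theta \circ \rE_P}(t))$ then yields (i).

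For (i)$\Rightarrow$(ii): Theorem \ref{thm-intertwining} produces a nonzero partial isometry $V \in \core_\varphi(M)$ and a unital normal $*$-homomorphism $\pi$ from a corner of $\Pi_{\varphi,\psi}(\rL_\psi(\R) q)$ into $\core_\varphi(P)$ with $\Pi_{\varphi,\psi}(a) V = V \pi(a)$. The idea is to exploit the Takesaki dual $\R$-action $(\widehat{\sigma^\varphi}_s)$ on $\core_\varphi(M)$: its fixed-point algebra is $\pi_\varphi(M)$, it globally preserves $\core_\varphi(P)$, and both $\lambda_\varphi(t)$ and $\Pi_{\varphi,\psi}(\lambda_\psi(t)) = w_t \lambda_\varphi(t)$ are eigenvectors of eigenvalue $e^{\ri s t}$, so the intertwining identity is equivariant. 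I would average $V$ against $(\widehat{\sigma^\varphi}_s)$ in the Fourier mode projecting onto $\pi_\varphi(M)$ and polar-decompose to extract a nonzero partial isometry $v \in M$ with $q_0 := v^*v \in q M^\psi q$ and $p := vv^*$. Applied to $a = q\lambda_\psi(t)$, the intertwining then forces $v^*\Pi_{\varphi,\psi}(q\lambda_\psi(t)) v$ to lie in $\core_\varphi(P)$, and the $\widehat{\sigma^\varphi}$-homogeneity of degree $t$ forces it to be of the form $w'_t \lambda_\varphi(t)$ for some $\sigma^\varphi|_P$-cocycle $w'_t \in P$. By Connes' Radon--Nikodym theorem, $w'_t = [\rD(\theta \circ \rE_P) : \rD\varphi]_t$ for a unique faithful normal positive $\theta \in P_*$. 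Reversing the Connes--Radon--Nikodym computation of the forward direction then produces $\psi q_0 = v^*(\theta \circ \rE_P) v$, with $p \in M^{\theta \circ \rE_P}$ emerging from the fact that the resulting element $w'_t \lambda_\varphi(t)$ commutes with $p$ on both sides of the intertwining.

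The main obstacle is this extraction step. The intertwiner $V$ genuinely lives in $\core_\varphi(M)$, not in $M$, and passing to a partial isometry $v \in M$ requires a careful spectral analysis of $V$ with respect to $(\widehat{\sigma^\varphi}_s)$ to guarantee that the averaged element is nonzero and that its polar part has the correct source and range projections; equally, one must check that the cocycle $w'_t$ read off from $\pi$ assembles into a \emph{globally defined} faithful normal positive functional on $P$ rather than merely a weight on a corner. Both points require keeping careful track of the $\Tr_\varphi$-finite trace bookkeeping inside $\core_\varphi(M)$ and form the technical heart of the proof.
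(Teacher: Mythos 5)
Your proof of (ii)$\Rightarrow$(i) is correct and is essentially the paper's computation: from $\psi q_0 = v^*(\theta\circ\rE_P)v$ one gets $v w_t = u_t\sigma_t^\vphi(v)$ with $u_t = [\rD\theta:\rD\vphi|_P]_t \in P$, hence $v\,\Pi_{\vphi,\psi}(\lambda_\psi(t)) = \Pi_{\vphi|_P,\theta}(\lambda_\theta(t))\,v$, which yields the intertwining after cutting by $r$.

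The direction (i)$\Rightarrow$(ii), however, has a genuine gap, and it sits exactly where you locate ``the technical heart'': the extraction of a partial isometry $v\in M$ from the core intertwiner $V$. Two specific problems. First, the equivariance you invoke is not available: the homomorphism $\pi$ produced by Theorem \ref{thm-intertwining} is an arbitrary embedding of a corner of $\Pi_{\vphi,\psi}(\rL_\psi(\R)qr)$ into a corner of $\core_\vphi(P)$ and need not commute with the dual action, and the finite-trace projection $r$ is itself not invariant under $(\widehat{\sigma^\vphi}_s)$, since the dual action translates the spectral projections of $\rL_\psi(\R)$. Second, because the dual action is by the noncompact group $\R$, there is no normal conditional expectation of $\core_\vphi(M)$ onto its fixed-point algebra $\pi_\vphi(M)$, only an operator-valued weight; ``averaging $V$ in the degree-zero Fourier mode'' is not defined for a general element of the core, and even a regularized version carries no guarantee that the outcome is nonzero, as $V$ may have no mass near Fourier degree zero. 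This nonvanishing is the whole difficulty, and the paper resolves it by a different mechanism: it first converts the negation of (i), via Theorem \ref{thm-intertwining}, into the existence of $\delta>0$ and finitely many $x_i\in qM$ with $\sum_{i,j}\vphi\bigl(\rE_P(x_i^* w_t\sigma_t^\vphi(x_j))\,\rE_P(x_i^* w_t\sigma_t^\vphi(x_j))^*\bigr)\geq\delta$ for all $t$; it then works in the basic construction $\langle M,e_P\rangle$ with the weight $\psih=\psi\circ\rT_M$, forms $X=\sum_j x_je_Px_j^*$, and takes the element of minimal $\psih$-norm in the closed convex hull of $\{\sigma_t^{\psih}(X):t\in\R\}$; the invariant functional $\Omega$ certifies that this $\sigma^{\psih}$-fixed point is nonzero, and the push-down lemma $V=e_P\,\rT_M(V)$ is what finally lands a partial isometry in $M$. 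Your plan contains no substitute for this convexity/minimization step, so as written the argument does not go through. The remaining point you flag, promoting the cocycle to a globally defined faithful normal positive functional on $P$, is also left unresolved: one must pass through Connes' theorem for weights, cut by a finite spectral projection of $\rE_P(vv^*)$, and add a state on the complementary corner.
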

\begin{proof}
Assume that $(\rm i)$ holds. Write $w_t = [\rD\psi: \rD\vphi]_t$. We claim that there exists a $\delta > 0$ and $x_1,\ldots,x_k \in q M$ such that
\begin{equation}\label{eq.what-we-get}
\sum_{i,j=1}^k \vphi\bigl( \; \rE_P(x_i^* \, w_t \, \si_t^\vphi(x_j) ) \; \rE_P(x_i^* \, w_t \, \si_t^\vphi(x_j) )^* \; \bigr) \geq \delta
\end{equation}
for all $t \in \R$. Assuming that the claim is false, we prove that $(\rm i)$ does not hold. Take a net $t_i \in \R$ such that
$$\lim_i \vphi\bigl( \; \rE_P(x^* \, w_{t_i} \, \si_{t_i}^\vphi(y) ) \; \rE_P(x^* \, w_{t_i} \, \si_{t_i}^\vphi(y) )^* \; \bigr) = 0$$
for all $x,y \in qM$. Using the $2$-norm $\|\,\cdot\,\|_2$ w.r.t.\ the canonical trace on $\core_\vphi(M)$, we get that for all finite trace projections $p,p' \in \rL_\vphi(\R)$ and all $x,y \in qM$,
\begin{align*}
\bigl\| p \, \rE_P(x^* \, w_{t_i} \, \si_{t_i}^\vphi(y) ) \, p' \bigr\|_2^2 & \leq \bigl\| p \, \rE_P(x^* \, w_{t_i} \, \si_{t_i}^\vphi(y) )\bigr\|_2^2 \\
& = \Tr(p) \, \vphi\bigl( \; \rE_P(x^* \, w_{t_i} \, \si_{t_i}^\vphi(y) ) \; \rE_P(x^* \, w_{t_i} \, \si_{t_i}^\vphi(y) )^* \; \bigr) \recht 0 \; .
\end{align*}
We then also get for all finite trace projections $p,p' \in \rL_\vphi(\R)$, all $s,s' \in \R$ and all $x,y \in M$ that
\begin{align*}
\bigl\| \rE_{\core_\vphi(P)}\bigl( p \, \lambda_\vphi(s)^* \, x^* \, & \Pi_{\vphi,\psi}(\lambda_\psi(t_i) q) \, y \, \lambda_\vphi(s') \, p' \bigr)  \bigr\|_2 \\
& = \bigl\| \lambda_\vphi(s)^* \, p \, \rE_P\bigl( (qx)^* w_{t_i} \sigma_{t_i}^\vphi(qy)\bigr) \, p' \, \lambda_\vphi(t_i + s')\|_2 \\
& = \bigl\| p \, \rE_P\bigl( (qx)^* w_{t_i} \sigma_{t_i}^\vphi(qy)\bigr) \, p' \|_2 \recht 0 \; .
\end{align*}
The linear span of such elements $x \lambda_\vphi(s) p$ is dense in $\rL^2(\core_\vphi(M),\Tr)$. So whenever $r \in \rL_\psi(\R)$ is a finite trace projection and $a,b \in \core_\vphi(M)$, we first approximate $\Pi_{\vphi,\psi}(r) \, b$ in $\|\, \cdot \, \|_2$ by a linear combination of $y \lambda_{\vphi}(s') p'$ and conclude that
$$\bigl\| \rE_{\core_\vphi(P)}\bigl( p \, \lambda_\vphi(s)^* \, x^* \,  \Pi_{\vphi,\psi}(\lambda_\psi(t_i) q r) \, b \bigr)\bigr\|_2 \recht 0$$
for all finite trace projections $p \in \rL_\vphi(\R)$ and all $s \in \R$, $x \in M$. We then approximate $\Pi_{\vphi,\psi}(r) \, a$ in $\|\, \cdot \, \|_2$ by a linear combination of $x \lambda_{\vphi}(s) p$ and conclude that
$$\bigl\| \rE_{\core_\vphi(P)}\bigl( a^* \, \Pi_{\vphi,\psi}(\lambda_\psi(t_i) q r) \, b \bigr)\bigr\|_2 \recht 0 \; .$$
Applying Theorem \ref{thm-intertwining}, we conclude that $(\rm i)$ does not hold. This concludes the proof of the claim.

Fix $\delta > 0$ and $x_1,\ldots,x_k$ such that \eqref{eq.what-we-get} holds for all $t \in \R$. Denote by $\langle M,e_P \rangle$ the basic construction for $P \subset M$, i.e.\ the von Neumann algebra acting on $\rL^2(M,\vphi)$ generated by $M$ acting by left multiplication and the orthogonal projection $e_P : \rL^2(M,\vphi) \recht \rL^2(P,\vphi)$. As in \cite[Section 2.1]{ILP96}, denote by $\vphih$ the canonical faithful normal semifinite weight on $\langle M,e_P \rangle$ characterized by
$$\sigma_t^{\vphih}(x e_P y) = \sigma_t^{\vphi}(x) e_P \sigma_t^{\vphi}(y) \quad\text{and}\quad \vphih(x e_P y) = \vphi(xy)$$
for all $x,y \in \R$. We have $\sigma_t^{\hat \vphi}(T) = \Delta_\vphi^{\ri t} T \Delta_\vphi^{-\ri t}$ for all $T \in \langle M,e_P \rangle$ and all $t \in \R$. In particular, $\sigma_t^{\vphih}(x) = \sigma_t^\vphi(x)$ for all $x \in M$. Denote by $\rT_M$ the unique faithful normal semifinite operator valued weight from $\langle M,e_P \rangle$ to $M$ such that $\vphih = \vphi \circ \rT_M$. Note that $\rT_M(e_P) = 1$.

Define $\psih = \psi \circ \rT_M$. So, $\psih$ is a faithful normal semifinite weight on $\langle M,e_P \rangle$ and by \cite[Theorem 4.7]{Ha77}, we have
$$[\rD \psih : \rD \vphih]_t = [\rD\psi : \rD\vphi]_t = w_t$$
for all $t \in \R$. In particular, we get that
\begin{equation}\label{eq.formula-sigma-psih}
\sigma_t^{\psih}(x e_P y) = w_t \sigma_t^\vphi(x) e_P \sigma_t^\vphi(y) w_t^*
\end{equation}
for all $x,y \in M$ and $t \in \R$.

Define the positive element $X \in q \langle M,e_P \rangle q$ by
$$X = \sum_{j = 1}^k x_j e_P x_j^* \; .$$
Also define the normal positive functional $\Om \in \langle M,e_P \rangle_*$ given by
$$\Om(T) = \sum_{i=1}^k \vphih(e_P x_i^* T x_i e_P)$$
for all $T \in \langle M,e_P \rangle$. Using \eqref{eq.formula-sigma-psih} and \eqref{eq.what-we-get}, we get for every $t \in \R$,
\begin{align*}
\Om(\sigma^{\psih}_t(X)) &= \sum_{i,j=1}^k \vphih\bigl( e_P x_i^* w_t \sigma_t^\vphi(x_j) e_P \sigma_t^\vphi(x_j)^* w_t^* x_i e_P \bigr) \\
&= \sum_{i,j=1}^k \vphi\bigl( \rE_P(x_i^* w_t \sigma_t^\vphi(x_j)) \, \rE_P(x_i^* w_t \sigma_t^\vphi(x_j))^* \bigr) \geq \delta \; .
\end{align*}
Define $K$ as the $\si$-weakly closed convex hull of $\{\sigma_t^{\psih}(X) : t \in \R \}$ inside $q \langle M ,e_P \rangle q$. Note that $\|Y\| \leq \|X\|$ for all $Y \in K$. Also, every $Y \in K$ is positive and satisfies $\psih(Y) \leq \psih(X) < +\infty$, by the $\sigma_t^{\psih}$-invariance and $\sigma$-weak lower semicontinuity of $\psih$. We then also have
$$\psih(Y^* Y) = \psih(Y^2) \leq \|Y\| \, \psih(Y) \leq \|X\| \, \psih(X)$$
for all $Y \in K$. By \cite[Lemma 4.4]{HI15}, the image of $K$ in $\rL^2(\langle M,e_P \rangle,\psih)$ is norm closed. So, there is a unique element $X_0 \in K$ where the function $Y \mapsto \psih(Y^*Y)$ attains its minimal value. Since this function is $\sigma_t^{\psih}$-invariant, it follows that $\sigma_t^{\psih}(X_0) = X_0$ for all $T$. Since $\Om(\sigma_t^{\psih}(X)) \geq \delta$ for all $t \in \R$, also $\Om(X_0) \geq \delta$, so that $X_0 \neq 0$. Since $\rT_M \circ \sigma_t^{\psih} = \sigma_t^\psi \circ \rT_M$ and since $\rT_M$ is $\sigma$-weakly lower semicontinuous, we get that $\|\rT_M(Y)\| \leq \|\rT_M(X)\| < +\infty$ for all $Y \in K$. In particular, $\|\rT_M(X_0)\| < +\infty$.

Take $\eps > 0$ small enough such that the spectral projection $e = 1_{[\eps,+\infty)}(X_0)$ is nonzero. It follows that $e$ is a projection in $q \langle M,e_P \rangle q$ satisfying $\sigma_t^{\psih}(e) = e$ for all $t \in \R$ and $\|\rT_M(e)\| < +\infty$. By Lemma \ref{lem.technical} below, we may assume that $e \prec e_P$ inside $\langle M,e_P \rangle$. Take a partial isometry $V \in \langle M,e_P \rangle$ such that $V^* V = e$ and $V V^* \leq e_P$. Let $p_0 \in P$ be the unique projection such that $VV^* = p_0 e_P$. We get that $V = p_0 V$. Since $e \leq q$, we also have $V = V q$.

Since $\|\rT_M(V^* V)\| = \|\rT_M(e)\| < +\infty$, it follows from the push down lemma \cite[Proposition 2.2]{ILP96} (where the factoriality assumption is unnecessary) that
$$V = e_P V = e_P \, \rT_M(e_P V) = e_P \, \rT_M(V) \; .$$
Write $v = \rT_M(V)$. Then, $v \in M$ and $V = e_P v$. By construction, $v \in p_0 M q$.

Since $e_P \langle M , e_P \rangle e_P = P e_P$, we can uniquely define $u_t \in P$ such that
$$u_t e_P = V w_t \sigma_t^{\vphih}(V^*)$$
for all $t \in \R$. Since $V^* V = e$ and $w_t \si_t^{\vphih}(V^*V) w_t^* = \sigma_t^{\psih}(e) = e$, we get that $u_t u_t^* = p_0$ and $u_t^* u_t = \sigma_t^\vphi(p_0)$ for all $t \in \R$. Also, $t \mapsto u_t$ is strongly continuous and
\begin{align*}
u_t \sigma_t^\vphi(u_s) \, e_P & = u_t e_P \, \sigma_t^{\vphih}(u_s e_P) = V w_t \sigma_t^{\vphih}(V^*) \; \sigma_t^{\vphih}(V w_s \sigma_s^{\vphih}(V^*)) \\
& = V w_t \, \sigma_t^{\vphih}(V^* V) \, \sigma_t^\vphi(w_s) \, \sigma_{t+s}^{\vphih}(V^*) = V \, \sigma_t^{\psih}(e) \, w_t \sigma_t^\vphi(w_s) \, \sigma_{t+s}^{\vphih}(V^*) \\
& = V \, w_{t+s} \, \sigma_{t+s}^{\vphih}(V^*) = u_{t+s} e_P
\end{align*}
for all $s,t \in \R$. So, $(u_t)_{t \in \R}$ is a $1$-cocycle for $\vphi|_P$. By \cite[Th\'{e}or\`{e}me 1.2.4]{Co72} (see also \cite[Theorem VIII.3.21]{Ta03} for a formulation adapted to non faithful states), there is a unique faithful normal semifinite weight $\theta$ on $p_0 P p_0$ such that $[\rD \theta : \rD \vphi|_P]_t = u_t$ for all $t \in \R$. Define the faithful normal semifinite weight $\theta_1$ on $p_0 M p_0$ by $\theta_1 = \theta \circ \rE_P$. By \cite[Theorem 4.7]{Ha77}, we have that $[\rD \theta_1 : \rD \vphi]_t = u_t$ for all $t \in \R$.

Since $u_t \in P$, we get that
\begin{align*}
e_P u_t \sigma_t^\vphi(v) & = u_t e_P \sigma_t^\vphi(v) = u_t e_P \sigma_t^{\vphih}(V) \\
&= V w_t \sigma_t^{\vphih}(V^* V) = V \, \sigma_t^{\psih}(V^*V) \, w_t = V w_t = e_P v w_t \; .
\end{align*}
Applying $\rT_M$, we conclude that $u_t \sigma_t^\vphi(v) = v w_t$ for all $t \in \R$. Replacing $v$ by its polar part, we may assume that $v \in M$ is a partial isometry such that $p_1 = vv^* \in (p_0 M p_0)^{\theta_1}$ and $q_1 = v^* v \in q M^\psi q$.
We then get that
$$[\rD(v^* \theta_1 v) : \rD\vphi]_t = v^* [\rD\theta_1 : \rD\vphi]_t \sigma_t^\vphi(v) = v^* u_t \sigma_t^\vphi(v) = q_1 [\rD \psi : \rD\vphi]_t = [\rD(\psi q_1) : \rD\vphi]_t$$
for all $t \in \R$. We conclude that $\psi q_1 = v^* \theta_1 v$.

In particular, $\theta(\rE_P(p_1)) = \theta_1(p_1) = \psi(q_1) < +\infty$. Also, $\sigma_t^\theta(\rE_P(p_1)) = \rE_P(\sigma_t^{\theta_1}(p_1)) = \rE_P(p_1)$, for all $t \in \R$. We can thus find a nonzero spectral projection $f$ of $\rE_P(p_1)$ such that $f v \neq 0$, $\theta(f) < +\infty$ and $f \in (p_0 P p_0)^\theta$. Since $u_t \sigma_t^\vphi(v) = v w_t$, $[\rD \theta : \rD \vphi|_P]_t = u_t$ and $f \in (p_0 P p_0)^\theta$, we get that
$$u_t \sigma_t^\vphi(f v) = f u_t \sigma_t^\vphi(v) = f v w_t \; .$$
We then replace $\theta$ by $\theta f$ and $v$ by the polar part of $f v$. Then, $\theta$ is a faithful normal positive functional on $fPf$, the projection $p = vv^*$ belongs to $(f M f)^{\theta \circ \rE_P}$, the projection $q_0 = v^* v$ belongs to $q M^\psi q$ and $\psi q_0 = v^* (\theta \circ \rE_P) v$. Adding to $\theta$ an arbitrary faithful normal state on $(1-f)P(1-f)$, it follows that $({\rm ii})$ holds.

Conversely, assume that $({\rm ii})$ holds. Take $\theta$, $q_0$, $p$ and $v$ as in the statement of $(\rm ii)$. Define $w_t = [\rD \psi : \rD\vphi]_t$ and $u_t = [\rD \theta : \rD \vphi|_P]_t$. Since $\psi q_0 = v^* (\theta \circ \rE_P) v$, we get that $v w_t = u_t \sigma_t^\vphi(v)$ for all $t \in \R$. This means that
$$v \; \Pi_{\vphi,\psi}(\lambda_\psi(t)) = \Pi_{\vphi|_P,\theta}(\lambda_\theta(t)) \; v$$
for all $t \in \R$. Also, $v \; \Pi_{\vphi,\psi}(q r) = v \; \Pi_{\vphi,\psi}(r) \neq 0$ for every nonzero finite trace projection $r \in \rL_\psi(\R)$. We conclude that
$$\Pi_{\vphi,\psi}(\rL_\psi(\R) q r) \prec_{\core_\vphi(M)} \core_\vphi(P)$$
for every nonzero finite trace projection $r \in \rL_\psi(\R)$, so that $({\rm i})$ holds.
\end{proof}

Applying Theorem \ref{thm-key} to the case $P = \C 1$, we get the following result.

\begin{cor}\label{cor.key}
Let $M$ be a von Neumann algebra with faithful normal states $\psi,\vphi \in M_*$ and let $q \in M^\psi$ be a nonzero projection. Then the following statements are equivalent.
\begin{itemize}
\item [$({\rm i})$] There exists a nonzero finite trace projection $r \in \rL_\psi(\R)$ such that $$\Pi_{\vphi,\psi}(\rL_\psi(\R) q r) \prec_{\core_\vphi(M)} \rL_\vphi(\R) \; .$$
\item [$({\rm ii})$] There exists a nonzero partial isometry $v \in M$ such that $p = vv^*$ belongs to $M^\vphi$, $q_0 = v^* v$ belongs to $q M^\psi q$, and
$$\frac{1}{\psi(q_0)} \, \psi q_0 = \frac{1}{\vphi(p)} \, v^* \vphi v \; .$$
\end{itemize}
\end{cor}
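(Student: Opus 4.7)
The plan is to obtain Corollary \ref{cor.key} as a direct specialization of Theorem \ref{thm-key} to the case $P = \C 1$. The first thing to do is unpack what each object in Theorem \ref{thm-key} becomes in this degenerate situation: the $\vphi$-preserving conditional expectation $\rE_P : M \to \C 1$ is just $x \mapsto \vphi(x) 1$; since $\sigma^\vphi$ acts trivially on $\C 1$, the continuous core $\core_\vphi(\C 1) = \C 1 \rtimes_{\sigma^\vphi} \R$ collapses to $\rL_\vphi(\R)$; and any faithful normal positive functional $\theta$ on $\C 1$ is a positive scalar $c > 0$, so $\theta \circ \rE_P = c\vphi$. The hypotheses of Theorem \ref{thm-key} are trivially met, since $\vphi$ restricts to a state on $\C 1$ which is automatically $\sigma^\vphi$-invariant.

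With these identifications in hand, condition $({\rm i})$ of Theorem \ref{thm-key} reads verbatim as condition $({\rm i})$ of the corollary. For the forward direction of $({\rm i}) \Rightarrow ({\rm ii})$, Theorem \ref{thm-key} produces some $c > 0$ and a partial isometry $v$ with $p = vv^* \in M^{c\vphi} = M^\vphi$, $q_0 = v^*v \in q M^\psi q$, and $\psi q_0 = c \cdot v^* \vphi v$. To convert this into the normalized form of condition $({\rm ii})$ of the corollary, I would simply evaluate both sides at $y = 1$. Using the convention $(a\theta b)(y) = \theta(bya)$, the left-hand side gives $(\psi q_0)(1) = \psi(q_0)$ and the right-hand side gives $c \cdot (v^*\vphi v)(1) = c\vphi(vv^*) = c\vphi(p)$. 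This fixes $c = \psi(q_0)/\vphi(p)$, so that the relation rewrites as
$$\frac{1}{\psi(q_0)}\psi q_0 = \frac{1}{\vphi(p)} v^* \vphi v,$$
which is exactly condition $({\rm ii})$ of the corollary.

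The converse $({\rm ii}) \Rightarrow ({\rm i})$ is immediate: given a partial isometry $v$ satisfying the normalized relation, define $c := \psi(q_0)/\vphi(p)$ and $\theta := c$ as a positive functional on $\C 1$. Then $p \in M^\vphi = M^{c\vphi} = M^{\theta\circ \rE_P}$ and $\psi q_0 = c \cdot v^*\vphi v = v^*(\theta \circ \rE_P) v$, so condition $({\rm ii})$ of Theorem \ref{thm-key} is verified and its $({\rm ii}) \Rightarrow ({\rm i})$ direction gives the intertwining.

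There is no genuine obstacle here beyond careful bookkeeping; the only delicate point is confirming the two evaluations that determine the scalar $c$, and this is purely algebraic. In essence, Corollary \ref{cor.key} is Theorem \ref{thm-key} read off in the trivial boundary case, with the functional-on-$\C 1$ absorbed into the normalization of the two states.
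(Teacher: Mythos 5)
Your proposal is correct and coincides with the paper's own (essentially unwritten) argument: the paper simply states that the corollary follows by applying Theorem \ref{thm-key} with $P=\C 1$, and your identifications ($\rE_P = \vphi(\cdot)1$, $\core_\vphi(\C 1)=\rL_\vphi(\R)$, $\theta = c>0$ so $M^{\theta\circ\rE_P}=M^\vphi$) together with the evaluation at $y=1$ fixing $c=\psi(q_0)/\vphi(p)$ are exactly the bookkeeping needed.
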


\begin{lem}\label{lem.technical}
Let $\psi$ be a faithful normal semifinite weight on a von Neumann algebra $N$ and $e \in N^\psi$ a projection satisfying $0 < \psi(e) < +\infty$. Let $e_1 \in N$ be any projection with central support equal to $1$. Then there exists a nonzero projection $e_0 \in e N^\psi e$ satisfying $e_0 \prec e_1$ inside $N$.
\end{lem}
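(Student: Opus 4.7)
My plan is to reduce, via the comparison theorem for projections, either to a trivial case or to the situation where $e_1 \prec e$ in $N$, and then to handle the remaining case by passing to a direct integral of factors.

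First, I apply the comparison theorem for projections in $N$ to the pair $(e, e_1)$, obtaining $z \in Z(N)$ with $ze \prec ze_1$ and $(1-z)e_1 \prec (1-z)e$ in $N$. Since $Z(N) \subset N^\psi$, the projection $ze$ lies in $eN^\psi e$; if $ze \neq 0$, then $e_0 := ze$ satisfies $e_0 \prec ze_1 \leq e_1$ and the proof is complete. Otherwise $ze = 0$, so $z$ is disjoint from the central support $z(e)$ of $e$. Then $(1-z)e = e$ and $(1-z)e_1 \prec e$; after replacing $N$ by the reduced algebra $z(e)N$ and $e_1$ by $z(e)e_1$ (which retains central support equal to the unit of $z(e)N$), I may assume that $e$ has central support equal to the unit of $N$ and $e_1 \prec e$ in $N$.

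To treat this remaining case, I reduce to the factor situation via the direct integral decomposition $N = \int_X^\oplus N_s \, \rd \mu(s)$ over the Gelfand spectrum $X$ of $Z(N)$. The data decompose as $\psi = \int^\oplus \psi_s$, $e = \int^\oplus e_s$, $e_1 = \int^\oplus e_1^s$, with $e_s \in N_s^{\psi_s}$, $\psi_s(e_s) < +\infty$, and $e_1^s \neq 0$ for $\mu$-almost every $s$. In each fiber I construct a nonzero $e_0^s \in e_s N_s^{\psi_s} e_s$ with $e_0^s \prec e_1^s$ in $N_s$: if $N_s$ is of type III, take $e_0^s := e_s$, since any two nonzero projections in a type III factor are Murray--von Neumann equivalent; if $N_s$ is semifinite with canonical trace $\Tr_s$ and Radon--Nikodym decomposition $\psi_s = \Tr_s(h_s \,\cdot\,)$ for a positive density $h_s$ commuting with $e_s$, then the spectral projections of the positive operator $h_s e_s$ lie in $e_s N_s^{\psi_s} e_s$, and since $\Tr_s(h_s e_s) = \psi_s(e_s) < +\infty$, spectral projections of arbitrarily small positive $\Tr_s$-trace are available, in particular one of trace at most $\Tr_s(e_1^s)$, yielding the desired $e_0^s$.

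Finally, applying the Kuratowski--Ryll-Nardzewski measurable selection theorem produces a measurable field of such $e_0^s$ together with implementing partial isometries $v_s \in N_s$; assembling, I obtain $e_0 := \int^\oplus e_0^s \, \rd \mu(s) \in e N^\psi e$ and $v := \int^\oplus v_s \, \rd \mu(s) \in N$ witnessing $e_0 \prec e_1$ in $N$. I expect the main technical obstacle to be the measurability verification throughout the direct integral decomposition, particularly that the fiberwise selection of a spectral projection with the prescribed trace bound can be made Borel in $s$; this is standard for spectral-theoretic constructions but requires careful formulation of the selection problem.
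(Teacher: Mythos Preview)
Your approach is substantially more elaborate than the paper's three-line argument, which avoids direct integrals entirely. The paper first observes that since $e_1$ has full central support, there is a nonzero projection $f \leq e$ with $f \prec e_1$ in $N$ (standard comparison theory: $e$ and $e_1$ are not centrally orthogonal, so they admit nonzero equivalent subprojections). Then, working in the corner $eNe$ with the faithful normal \emph{state} $\theta = \psi(e)^{-1}\psi(\,\cdot\,)$, it invokes \cite[Lemma~2.1]{HU15}, which says that in a von Neumann algebra with a faithful normal state every projection is Murray--von Neumann equivalent to a projection in the centralizer. Applying this to $f \in eNe$ yields $e_0 \in (eNe)^\theta = eN^\psi e$ with $e_0 \sim f \prec e_1$. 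No disintegration, no type analysis, no selection theorem.

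Your proposal, by contrast, has a genuine gap in the semifinite fiber case. You assert that because $\Tr_s(h_s e_s) = \psi_s(e_s) < +\infty$, spectral projections of $h_s e_s$ of arbitrarily small positive $\Tr_s$-trace are available. This is false in general: if $h_s e_s = c\, e_s$ for some scalar $c > 0$ (which happens whenever $\psi_s$ restricts to a multiple of $\Tr_s$ on the range of $e_s$), then the only nonzero spectral projection of $h_s e_s$ is $e_s$ itself, and after your reduction you only know $\Tr_s(e_1^s) \leq \Tr_s(e_s)$, not the reverse. The gap is patchable---when a spectral projection $p$ of $h_s e_s$ is minimal, $h_s p$ is a scalar, so every projection below $p$ lies in $N_s^{\psi_s}$, and since $\Tr_s(p) < \infty$ one can cut down further inside the finite factor $pN_sp$---but the argument as written is incomplete. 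Note also that your disintegration step tacitly assumes separable predual, an assumption the paper's proof does not need.
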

\begin{proof}
Since the central support of $e_1$ equals $1$, we can find a nonzero projection $f \in N$ such that $f \leq e$ and $f \prec e_1$. Define the faithful normal state $\theta$ on $eNe$ given by $\theta(x) = \psi(e)^{-1} \psi(x)$ for all $x \in eNe$. By \cite[Lemma 2.1]{HU15}, there exists a projection $e_0 \in (eNe)^\theta$ such that $e_0 \sim f$ inside $eNe$. Then, $e_0$ is a nonzero projection in $e N^\psi e$ and $e_0 \prec e_1$ inside $N$.
\end{proof}

\section{Isomorphisms of free Araki--Woods factors}

The isomorphism part of Corollary \ref{corB} follows from the following result that we deduce from \cite{Sh96}.

\begin{thm}\label{thm.isom}
Let $\mu$ be any finite symmetric Borel measure on $\R$ and $m : \R \to \N \cup \{+\infty\}$ any symmetric Borel multiplicity function. Denote by $\Lambda$ the subgroup of $\R$ generated by the atoms of $\mu$ and assume that $\Lambda \neq \{0\}$. There is an isomorphism
$$\Gamma(\mu,m)\dpr \cong \Gamma(\mu \ast \delta_{\Lambda},+\infty)\dpr$$
preserving the free quasi-free states, where $\delta_{\Lambda}$ denotes any atomic finite symmetric Borel measure on $\R$ with set of atoms equal to $\Lambda$.
\end{thm}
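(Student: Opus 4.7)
The strategy is to reduce the claimed state-preserving isomorphism, via free product decomposition of the underlying orthogonal representation, to a free absorption statement for the almost periodic free Araki--Woods factor associated with $\Lambda$.

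First, I would decompose the orthogonal representation as $U = U^a \oplus U^c$, where $U^a$ and $U^c$ are respectively the atomic and continuous parts of $U$, carrying spectral data $(\mu_a, m_a)$ and $(\mu_c, m_c)$. The free product compatibility of the free Araki--Woods construction \cite{Sh96} then yields a state-preserving isomorphism $(\Gamma(\mu,m)\dpr, \vphi_{\mu,m}) \cong (\Gamma(\mu_a, m_a)\dpr, \vphi_{\mu_a, m_a}) \ast (\Gamma(\mu_c, m_c)\dpr, \vphi_{\mu_c, m_c})$. By the almost periodic classification in \cite{Sh96}, since the atoms of $\mu_a$ generate $\Lambda \neq \{0\}$, there is a state-preserving isomorphism $(\Gamma(\mu_a, m_a)\dpr, \vphi_{\mu_a, m_a}) \cong (\Gamma(\delta_\Lambda, +\infty)\dpr, \vphi_{\delta_\Lambda, +\infty})$, where one uses that almost periodic states on this class of factors whose point spectra generate the same countable subgroup of $\R$ are unitarily conjugate. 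Applying the analogous decomposition on the target side -- and noting that the atoms of $\mu$ all lie in $\Lambda$, so that the atomic part of $\mu \ast \delta_\Lambda$ is atomic with set of atoms exactly $\Lambda$, while its continuous part is $\mu_c \ast \delta_\Lambda$ -- I would obtain state-preservingly
\[
\Gamma(\mu \ast \delta_\Lambda, +\infty)\dpr \;\cong\; \Gamma(\delta_\Lambda, +\infty)\dpr \ast \Gamma(\mu_c \ast \delta_\Lambda, +\infty)\dpr .
\]

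After these reductions, the theorem is equivalent to the state-preserving absorption isomorphism
\[
\Gamma(\delta_\Lambda, +\infty)\dpr \ast \Gamma(\mu_c, m_c)\dpr \;\cong\; \Gamma(\delta_\Lambda, +\infty)\dpr \ast \Gamma(\mu_c \ast \delta_\Lambda, +\infty)\dpr .
\]
This is the main obstacle, and the plan for it is to work directly with the Fock space generators. Inside the left-hand side, the factor $\Gamma(\delta_\Lambda, +\infty)\dpr$ contains, for every $\lambda \in \Lambda$, an infinite supply of mutually free eigenoperators $u^{(\lambda)}_j$ satisfying $\sigma^\vphi_t(u^{(\lambda)}_j) = e^{\ri \lambda t} u^{(\lambda)}_j$. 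Multiplying these by the creation/annihilation operators coming from the continuous representation $U^c$ produces new semicircular-type elements whose modular spectral measures are the translates $\mu_c \ast \delta_\lambda$, with infinite multiplicity at each $\lambda$. Summing over $\lambda \in \Lambda$ then realizes the orthogonal representation with spectral measure $\mu_c \ast \delta_\Lambda$ and multiplicity $+\infty$. A Wick-type moment computation, in the spirit of \cite{Sh96, Sh97a}, would show both that these new generators, together with $\Gamma(\delta_\Lambda, +\infty)\dpr$, have exactly the joint distribution of $\Gamma(\delta_\Lambda, +\infty)\dpr \ast \Gamma(\mu_c \ast \delta_\Lambda, +\infty)\dpr$, and that they exhaust the whole ambient free product.

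The main difficulty is the absorption step: identifying the joint distribution requires a careful Wick-calculus argument on the free Fock space, and checking that the enlarged subalgebra fills the whole free product requires a density statement in the predual. The remaining steps are structural reductions that follow readily from the compatibility of the free Araki--Woods construction with direct sums of orthogonal representations and from the almost periodic classification.
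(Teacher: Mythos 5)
Your reduction steps (splitting off the atomic part, replacing it by $\Gamma(\delta_\Lambda,+\infty)\dpr$ via the almost periodic classification of \cite{Sh96}, and identifying the atomic/continuous parts of $\mu\ast\delta_\Lambda$) are all correct, and they correctly isolate the real content of the theorem in the absorption isomorphism $\Gamma(\delta_\Lambda,+\infty)\dpr \ast \Gamma(\mu_c,m_c)\dpr \cong \Gamma(\delta_\Lambda,+\infty)\dpr \ast \Gamma(\mu_c\ast\delta_\Lambda,+\infty)\dpr$. But that step is where all the work lies, and your sketch of it has a genuine gap. The ``eigenoperators'' $u^{(\lambda)}_j$ available inside $\Gamma(\delta_\Lambda,+\infty)\dpr$ are generalized circular elements $\ell(\xi)+\ell(J\Delta^{1/2}\xi)^*$, not partial isometries: $u^{(\lambda)\ast}_j u^{(\lambda)}_j$ is not a projection or a scalar, so the products $u^{(\lambda)}_j\, s(\xi_k)$ do not satisfy the creation-operator relations (of the type $L(\xi_1)^* a L(\xi_2) = \psi(a)\langle\xi_2,\xi_1\rangle 1$ in Lemma \ref{lem.model}) that are needed to identify the joint distribution as a free product. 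Computing the one-variable modular spectral measure of $u^{(\lambda)}_j s(\xi_k)$ as a translate of $\mu_c$ is easy; proving that the whole family, jointly with $\Gamma(\delta_\Lambda,+\infty)\dpr$, has the distribution of $\Gamma(\delta_\Lambda,+\infty)\dpr\ast\Gamma(\mu_c\ast\delta_\Lambda,+\infty)\dpr$ \emph{and} exhausts the ambient free product is not a routine Wick computation with these generators, and you cannot appeal to the free absorption results of \cite{Sh96}, which are proved only for almost periodic representations.

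The paper circumvents exactly this obstacle by manufacturing honest partial-isometry eigenoperators: it uses $\Gamma(\delta_t+\delta_{-t},1)\dpr \cong \rL^\infty([0,1])\ast B_{\exp(-t)}$ with $B_a = \rB(\ell^2(\N))$ carrying a Powers-type state, so that the matrix units $e_{ij}$ are eigenoperators of the modular group. The explicit free product model of \cite[Theorem 5.2]{Sh96} then represents $\Gamma(\mu,m)\dpr\ast B_a$ on a Fock space in such a way that compressing the generators by the minimal projection $1\ot e_{00}$ yields genuine creation-plus-annihilation operators for the translated representation, giving the state-preserving isomorphism $\Gamma(\mu,m)\dpr\ast B_a \cong \Gamma(\mu\ast\delta_{\Z\log a},+\infty)\dpr\ovt B_a$; this handles one atom at a time, and the full group $\Lambda$ is then reached by iterating and using the infinite free product trick $\Gamma(\mu_0,+\infty)\dpr\cong \bigast_{n}\Gamma(\mu_0,+\infty)\dpr$. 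To complete your proof you would need either to reproduce this compression argument or to supply a genuinely different verification of the free-product relations for your proposed generators, together with the density/exhaustion argument you currently only assert.
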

\begin{proof}
For every $0 < a < 1$, we denote by $B_a$ the von Neumann algebra $\rB(\ell^2(\N))$ equipped with the faithful normal state $\theta_a$ given by
$$\theta_a(T) = (1-a) \sum_{k=0}^\infty a^k \langle T(\delta_k),\delta_k \rangle \; ,$$
where $(\delta_k)_{k \in \N}$ is the standard orthonormal basis of $\ell^2(\N)$.
Throughout the proof of the theorem, we always assume that a free Araki--Woods factor comes with its free quasi-free state and that all free products are taken w.r.t.\ the canonical states that we fixed. We always equip a free product with the free product state and a tensor product with the tensor product state.

We first prove that for every $0 < a < 1$, for all finite symmetric Borel measures $\mu$ on $\R$ and all symmetric Borel multiplicity functions $m : \R \to \N \cup \{+\infty\}$, there exists a state preserving isomorphism
\begin{equation}\label{eq.basic-iso}
\Gamma(\mu,m)\dpr \ast B_a \cong \Gamma(\mu \ast \delta_{\Z \log(a)},+\infty)\dpr \ovt B_a \; .
\end{equation}
To prove \eqref{eq.basic-iso}, fix an orthogonal representation $(U_t)_{t \in \R}$ of $\R$ on a real Hilbert space $H_\R$ having $(\mu,m)$ as its spectral invariant. Denote by $H = H_\R + {\rm i} H_\R$ the complexification of $H_\R$. Define the positive operator $\Delta$ on $H$ such that $U_t = \Delta^{\ri t}$ and denote by $J : H \recht H$ the anti-unitary operator given by $J(\xi + {\rm i} \eta) = \xi - {\rm i} \eta$ for all $\xi,\eta \in H_\R$. Define $H_1 = H \ot \ell^2(\N^2)$. On $H_1$, we consider the positive operator $\Delta_1$ and anti-unitary operator $J_1$ given by
$$\Delta_1 (\xi \ot \delta_{ij}) = a^{j-i} \, \Delta(\xi) \ot \delta_{ij} \quad\text{and}\quad J_1(\xi \ot \delta_{ij}) = J(\xi) \ot \delta_{ji}$$
for all $i,j \in \N$ and $\xi \in D(\Delta)$. Here, $(\delta_{ij})$ denotes the standard orthonormal basis of $\ell^2(\N^2)$. Note that $J_1 \Delta_1 J_1 = \Delta_1^{-1}$.

Denote by $\cF(H_1)$ the full Fock space of $H_1$ and by $\theta_1$ the vector state on $\rB(\cF(H_1))$ implemented by the vacuum vector. For every $\xi \in H$, define the element $L(\xi) \in \rB(\cF(H_1)) \ovt B_a$ given by
$$L(\xi) = \sum_{i,j=0}^\infty \ell(\xi \ot \delta_{ij}) \ot e_{ij} \, \sqrt{(1-a) a^{i}} \; .$$
By \cite[Theorem 5.2]{Sh96}, we can realize $\Gamma(\mu,m)\dpr \ast B_a$ as the von Neumann algebra $\cM$ generated by
$$\{ L(\xi) + L(J\Delta^{1/2} \xi)^* : \xi \in D(\Delta^{1/2})\} \quad\text{and}\quad 1 \ot B_a \; .$$
Moreover, the free product state on $\Gamma(\mu,m)\dpr \ast B_a$ is given by the restriction of $\theta_1 \ot \theta_a$ to $\cM$.

To conclude the proof of \eqref{eq.basic-iso}, it thus suffices to show that there is a state preserving isomorphism
\begin{equation}\label{eq.enough}
\bigl((1 \ot e_{00})\cM(1 \ot e_{00}),\theta_1) \cong \bigl( \Gamma(\mu \ast \delta_{\Z \log(a)},+\infty)\dpr , \vphi_{\mu \ast \delta_{\Z \log(a)},+\infty} \bigr) \; .
\end{equation}
The left hand side of \eqref{eq.enough} is generated by the operators
$$(1 \ot e_{0i}) \, (L(\xi) + L(J\Delta^{1/2} \xi)^*) \, (1 \ot e_{j0}) = \sqrt{(1-a)a^i} \; \Bigl( \ell\bigl(\xi \ot \delta_{ij}\bigr) + \ell\bigl(J_1 \Delta_1^{1/2} (\xi \ot \delta_{ij})\bigr)^* \Bigr) \; .$$
So, the left hand side of \eqref{eq.enough} equals $(\Gamma(\mu_1,m_1)\dpr,\vphi_{\mu_1,m_1})$ where $\mu_1$ and $m_1$ are chosen so that the measure class and multiplicity function of $\log(\Delta_1)$ equal $\cC(\mu_1)$ and $m_1$. One checks that $\cC(\mu_1) = \cC(\mu \ast \delta_{\Z \log(a)})$ and $m_1 = +\infty$ a.e. So we have proved the existence of the state preserving isomorphism \eqref{eq.basic-iso}.

We next prove that for every $t > 0$, there is a state preserving isomorphism
\begin{equation}\label{eq.next-iso}
\Gamma(\mu,m)\dpr \ast \Gamma(\delta_t + \delta_{-t},1)\dpr \cong \Gamma(\mu \ast \delta_{\Z t} \vee \delta_{\Z t} , +\infty)\dpr \; .
\end{equation}
By \cite[Theorem 4.8]{Sh96}, there is a state preserving isomorphism
$$\Gamma(\delta_t + \delta_{-t},1)\dpr \cong \rL^\infty([0,1]) \ast B_{\exp(-t)} \; .$$
By \cite[Theorem 2.11]{Sh96}, the free Araki--Woods functor $\Gamma$ turns direct sums into free products. Writing $\mu_1 = \mu + \delta_0$, $m_1 = m + \delta_0$ and using \eqref{eq.basic-iso}, we obtain the state preserving isomorphisms
$$\Gamma(\mu,m)\dpr \ast \Gamma(\delta_t + \delta_{-t},1)\dpr \cong \Gamma(\mu_1,m_1)\dpr \ast B_{\exp(-t)} \cong \Gamma(\mu_1 \ast \delta_{\Z t},+\infty)\dpr \ovt B_{\exp(-t)} \; .$$
Applying this to $(\mu,m) = (\mu_1 \ast \delta_{\Z t},+\infty)$, we also have the state preserving isomorphisms
$$\Gamma(\mu_1 \ast \delta_{\Z t},+\infty)\dpr \cong \Gamma(\mu_1 \ast \delta_{\Z t},+\infty)\dpr \ast \Gamma(\delta_t + \delta_{-t},1)\dpr \cong \Gamma(\mu_1 \ast \delta_{\Z t},+\infty)\dpr \ovt B_{\exp(-t)} \; .$$
Combining both, it follows that \eqref{eq.next-iso} holds.

We are now ready to prove the theorem. Fix an atom $t>0$ of $\mu$. Writing $(\mu,m) = (\mu_0,m_0) + (\delta_{t} + \delta_{-t},1)$, we get from \eqref{eq.next-iso} the state preserving isomorphism
\begin{equation}\label{eq.final}
\Gamma(\mu,m)\dpr \cong \Gamma(\mu_0,m_0)\dpr \ast \Gamma(\delta_t + \delta_{-t},1)\dpr \cong \Gamma(\mu_0 \ast \delta_{\Z t} \vee \delta_{\Z t},+\infty)\dpr \cong \Gamma(\mu \ast \delta_{\Z t},+\infty)\dpr \; .
\end{equation}
Let $\{t_n : n \geq 0\}$ be the positive atoms of $\mu$, with repetitions if there are only finitely many of them. For every $n \geq 0$, define
$$\mu_n = \mu \ast \delta_{\Z t_0} \ast \cdots \ast \delta_{\Z t_n} \; .$$
For every $n$, $t_{n+1}$ is an atom of $\mu_n$. Repeatedly applying \eqref{eq.final}, we find the state preserving isomorphisms
$$\Gamma(\mu,m)\dpr \cong \Gamma(\mu_0,+\infty)\dpr \cong \Gamma(\mu_n,+\infty)\dpr \; .$$
So, we also get state preserving isomorphisms
$$
\Gamma(\mu,m)\dpr \cong \Gamma(\mu_0,+\infty)\dpr \cong \bigast_{n \in \N} \Gamma(\mu_0,+\infty)\dpr
\cong \bigast_{n \in \N} \Gamma(\mu_n,+\infty)\dpr \cong \Gamma(\vee_{n \in \N} \mu_n, +\infty) \; .
$$
Since $\vee_{n \in \N} \mu_n$ is equivalent with $\mu \ast \delta_\Lambda$, the theorem follows.
\end{proof}

We deduce the isomorphism part of Theorem \ref{thmF} from the following result, generalizing \cite[Theorem 5.1]{Sh97a} and proved using the same methods. For every faithful normal state $\psi$ on a von Neumann algebra $A$ and for every nonzero projection $p \in A$, we denote by $\psi_p$ the faithful normal state on $pAp$ given by $\psi_p(a) = \psi(p)^{-1} \psi(a)$ for all $a \in pAp$.

\begin{prop}\label{prop.corner-free-product-aw}
Let $\mu$ be a finite symmetric Borel measure on $\R$ and fix the free Araki--Woods factor $(M,\vphi) = (\Gamma(\mu,+\infty)\dpr , \vphi_{\mu,+\infty})$. Let $A$ be a von Neumann algebra with a faithful normal state $\psi$ having a factorial centralizer $A^\psi$. For every nonzero projection $p \in A^\psi$, there is a state preserving isomorphism
$$\Bigl(p\bigl( (M,\vphi) \ast (A,\psi) \bigr) p , (\vphi \ast \psi)_p\Bigr) \cong \Bigl((M,\vphi) \ast (pAp,\psi_p), \vphi \ast \psi_p\Bigr) \; .$$
\end{prop}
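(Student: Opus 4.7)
The plan is to follow the strategy of \cite[Theorem 5.1]{Sh97a}, adapted so that the tracial assumption on $\psi$ is weakened to factoriality of $A^\psi$. Since $A^\psi$ is a factor, $\psi|_{A^\psi}$ is a faithful normal (semifinite) tracial weight, so I can choose partial isometries $v_i \in A^\psi$ ($i$ in a finite or countable index set $I$, with $v_1 = p$) such that $v_i^* v_i = p$, the projections $v_i v_i^*$ are mutually orthogonal, and $\sum_i v_i v_i^* = 1$. This is immediate when $A^\psi$ is of type ${\rm II_\infty}$ or ${\rm I_\infty}$; when $A^\psi$ is of type ${\rm II_1}$ it covers the case $\psi(p) = 1/|I|$, and matrix amplification (passing to $(A \ovt M_k(\C), \psi \ot \tr_k)$) reduces the remaining ${\rm II_1}$ cases to this one.

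Next, since the multiplicity function of $\mu$ is identically $+\infty$, the underlying real Hilbert space $H_\R$ of $M$ decomposes orthogonally and $(U_t)$-invariantly as $H_\R = \bigoplus_{(i,j) \in I \times I} H_\R^{(ij)}$ with each summand of spectral invariant $(\mu,+\infty)$. By \cite[Theorem 2.11]{Sh96}, the functor $\Gamma$ turns this direct sum into a state-preserving free product decomposition $(M, \vphi) \cong \bigast_{(i,j) \in I \times I} (M_{ij}, \vphi_{ij})$, with each $(M_{ij}, \vphi_{ij}) \cong (M, \vphi)$. Inside $(M*A, \vphi*\psi)$, for each generator $s_\xi = \ell(\xi) + \ell(J\Delta^{1/2}\xi)^*$ belonging to the component $M_{ij}$, I form the renormalized element
$$ \widetilde{s}_\xi := \psi(p)^{-1/2}\, v_i \, s_\xi \, v_j^* \;\in\; p(M*A)p \; , $$
and let $N \subset p(M*A)p$ be the von Neumann algebra they generate. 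The proposition then reduces to checking that (a) $N$ is state-preservingly isomorphic to $(M,\vphi)$ with respect to $(\vphi*\psi)_p$, (b) $N$ and $pAp$ are freely independent in $(p(M*A)p, (\vphi*\psi)_p)$, and (c) $N \vee pAp = p(M*A)p$.

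Claims (a) and (b) would be verified by an alternating-moment computation on the full Fock space underlying $M*A$: the orthogonality $v_i^* v_j = \delta_{ij}\,p$, the $\sigma^\psi$-invariance of the $v_i$, and the normalization $\psi(p)^{-1/2}$ conspire so that every word $\widetilde{s}_{\xi_1} a_1 \widetilde{s}_{\xi_2} a_2 \cdots$ with $a_k \in pAp \ominus \C p$ produces exactly the value predicted by the free product $(M,\vphi) * (pAp, \psi_p)$; a short calculation already confirms the second-moment identity $\|\widetilde{s}_\xi\|_{2,(\vphi*\psi)_p} = \|s_\xi\|_{2,\vphi}$. Claim (c) is a word-length argument: inserting the partition of unity $\sum_i v_i v_i^* = 1$ between consecutive letters of any word in $M \cup A$ sandwiched by $p$ rewrites it as a word in the $\widetilde{s}_\xi$ and in $pAp$.

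The technical core of the argument lies in (a) and (b): one must check that the renormalized pieces $\widetilde{s}_\xi$ carry, under $(\vphi*\psi)_p$, exactly the free quasi-free distribution of $(M, \vphi)$ itself rather than a compression or amplification of it. In the tracial case of \cite{Sh97a} this is standard once one invokes $\rL(\F_\infty)^t \cong \rL(\F_\infty)$; its type ${\rm III}$ counterpart rests entirely on the infinite-multiplicity hypothesis on $\mu$, which furnishes enough independent copies of the spectral representation — one for each pair $(i,j) \in I \times I$ — to absorb the matrix-unit index set without altering $(\mu,+\infty)$. The hypothesis $v_i \in A^\psi$, rather than merely $v_i \in A$, is precisely what prevents the modular data of $\psi$ from spoiling the $\psi(p)^{-1/2}$ renormalization.
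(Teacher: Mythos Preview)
Your approach follows the same strategy as the paper (both adapt \cite[Theorem 5.1]{Sh97a}), but there is a genuine gap in the reduction step. You insist on partial isometries $v_i \in A^\psi$ with $v_i^* v_i = p$ and $\sum_i v_i v_i^* = 1$; since $\psi$ is a state, $\psi|_{A^\psi}$ is a faithful normal tracial state, so $A^\psi$ is always a finite factor (your ${\rm II}_\infty$ and ${\rm I}_\infty$ cases are vacuous), and such a decomposition forces $\psi(p)^{-1} \in \N$. Your matrix-amplification claim does not close this gap: replacing $A$ by $A \ovt M_k(\C)$ changes the ambient free product to $M \ast (A \ovt M_k(\C))$, and to recover $p(M \ast A)p$ from there you must first take the corner at $1 \ot e_{11}$, which already uses the very proposition for $\psi(1 \ot e_{11}) = 1/k$, and then take a further corner at $p \ot e_{11}$, which only falls under the special case when $\psi(p)/k = 1/m$, i.e.\ when $\psi(p)$ is rational. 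Irrational $\psi(p)$ remains inaccessible by this route.

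The paper avoids this obstruction by a more flexible decomposition: it chooses $v_i \in A^\psi$ with $v_i^* v_i \leq p$, $\sum_i v_i v_i^* = 1$, and $\psi(v_i^* v_i) = \psi(p)/n_i$ for integers $n_i \geq 1$, and then introduces a second family $w_{is} \in p A^\psi p$, $s = 1,\ldots,n_i$, with $w_{is} w_{is}^* = v_i^* v_i$ and $\sum_s w_{is}^* w_{is} = p$, which ``fills up'' $p$. The renormalized creation operators are then built from both families simultaneously, indexed by $(i,j,k,l)$; when all $n_i = 1$ this collapses exactly to your $\widetilde{s}_\xi$. Your claims (a)--(c) are handled in the paper by verifying the operator relations of Lemma~\ref{lem.model} rather than the full alternating-moment computation you outline, which is the cleaner route.
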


To prove Proposition \ref{prop.corner-free-product-aw}, we need the following lemma. It is a direct consequence of \cite[Corollary 2.5]{Sh97a}. To formulate the lemma, we use yet another convention for the construction of free Araki--Woods factors. We call \emph{involution} on a Hilbert space $H$ any closed densely defined antilinear operator $S$ satisfying $S(\xi) \in \rD(S)$ and $S(S(\xi)) = \xi$ for all $\xi \in \rD(S)$. Taking the polar decomposition $S = J \Delta^{1/2}$ of such an involution, we obtain an anti-unitary operator $J$ and a nonsingular positive selfadjoint operator $\Delta$ satisfying $J \Delta J = \Delta^{-1}$. Denoting by $U_t$ the restriction of $\Delta^{\ri t}$ to the real Hilbert space $H_\R = \{\xi \in H : J(\xi) = \xi\}$, we obtain an orthogonal representation $(U_t)_{t \in \R}$. Every orthogonal representation of $\R$ arises in this way. The associated free Araki--Woods factor can be realized on the full Fock space $\cF(H)$ as the von Neumann algebra generated by the operators $\ell(\xi) + \ell(S(\xi))^*$, $\xi \in \rD(S)$. We denote this realization of the free Araki--Woods factor as $\Gamma(H,S)\dpr$.

\begin{lem}\label{lem.model}
Let $K$ be a Hilbert space and $\Omega \in K$ a unit vector. Let $H$ be a Hilbert space and $H_0 \subset H$ a total subset. Assume that
\begin{itemize}
\item $A \subset \rB(K)$ is a von Neumann subalgebra and $\langle \, \cdot \, \Om , \Om \rangle$ defines a faithful state $\psi$ on $A$,
\item for every $\xi \in H_0$, we are given an operator $L(\xi) \in \rB(K)$,
\end{itemize}
such that the following conditions hold:
\begin{itemize}
\item [$(\rm i)$] $L(\xi_1)^* a L(\xi_2) = \psi(a) \, \langle \xi_2,\xi_1 \rangle \, 1$ for all $\xi_1,\xi_2 \in H_0$ and $a \in A$,
\item [$(\rm ii)$] $L(\xi)^* a \Om = 0$ for all $\xi \in H_0$ and $a \in A$,
\item [$(\rm iii)$] denoting by $\cA$ the $*$-algebra generated by $A$ and $\{L(\xi) : \xi \in H_0\}$, we have that $\cA \Om$ is dense in $K$.
\end{itemize}
Then, $L$ can be uniquely extended to a linear map $L : H \recht \rB(K)$ such that the above properties remain valid. For every involution $S$ on $H$ with associated free Araki--Woods factor $\Gamma(H,S)\dpr$, there is a unique normal homomorphism
$$\pi : \bigl( \Gamma(H,S)\dpr, \vphi_{(H,S)} \bigr) \ast (A,\psi) \recht \rB(K)$$
satisfying $\pi\bigl(\ell(\xi) + \ell(S(\xi))^*\bigr) = L(\xi) + L(S(\xi))^*$ for all $\xi \in \rD(S)$ and $\pi(a) = a$ for all $a \in A$. Also, $\langle \pi(\,\cdot\,)\Om,\Om \rangle$ equals the free product state $\vphi_{(H,S)} \ast \psi$.
\end{lem}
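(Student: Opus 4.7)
The plan is to follow the standard Fock-space strategy in three steps: extend $L$ to a linear contraction $H \to \rB(K)$, verify that the von Neumann subalgebra generated by $\{L(\xi) + L(S(\xi))^* : \xi \in \rD(S)\}$ is free from $A$ with respect to the vector state $\langle \,\cdot\, \Om, \Om \rangle$, and then build $\pi$ via the universal property of the reduced free product. Morally, this is what \cite[Corollary~2.5]{Sh97a} packages for a slightly different setup, and the lemma is obtained by checking that its hypotheses are encoded in $(\rm i)$--$(\rm iii)$.

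First, I extend $L$ linearly. Taking $a = 1$ and $\xi_1 = \xi_2 = \xi$ in $(\rm i)$ gives $L(\xi)^* L(\xi) = \|\xi\|^2 \, 1$, so $\|L(\xi)\| = \|\xi\|$ for $\xi \in H_0$. More generally, for a reduced-type vector $\eta = a_0 L(\xi_1) a_1 \cdots L(\xi_n) a_n \Om$ with $\xi_i \in H_0$ and $a_i \in A$, iterated application of $(\rm i)$ collapses $\|L(\xi)\eta\|^2$ to $\|\xi\|^2 \|\eta\|^2$. If $\sum_i c_i \xi_i = 0$ in $H$ for some finite combination with $\xi_i \in H_0$, then $\sum_i c_i L(\xi_i)$ annihilates every such $\eta$, and hence annihilates $\cA\Om$, which is dense in $K$ by $(\rm iii)$. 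This yields a well-defined contractive linear map $L$ on $\lspan(H_0)$, which extends by totality of $H_0$ and continuity to all of $H$. Conditions $(\rm i)$ and $(\rm ii)$ transfer by (anti)linearity.

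For the freeness and the construction of $\pi$, let $M_0 \subset \rB(K)$ denote the von Neumann subalgebra generated by $\{L(\xi) + L(S(\xi))^* : \xi \in \rD(S)\}$. A Wick-type induction on word length, using $(\rm i)$ and $(\rm ii)$, shows that any alternating product $b_1 \cdots b_n$ with $b_j$ alternately in $M_0$ and $A$ and $\langle b_j \Om, \Om \rangle = 0$ satisfies $\langle b_1 \cdots b_n \Om, \Om\rangle = 0$; comparing the two-point functions with the standard Fock realization $\Gamma(H,S)\dpr \subset \rB(\cF(H))$ identifies the restriction of $\langle \,\cdot\,\Om,\Om \rangle$ to $M_0$ with the free quasi-free state $\vphi_{(H,S)}$. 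Hence $M_0$ and $A$ sit inside $\rB(K)$ in reduced free product position with respect to the states induced by $\Om$. The assignment $\ell(\xi) + \ell(S(\xi))^* \mapsto L(\xi) + L(S(\xi))^*$, $a \mapsto a$ therefore defines a $*$-homomorphism on the algebraic free product that intertwines the two GNS cyclic vectors isometrically. Combined with cyclicity of $\Om$ from $(\rm iii)$, it extends to a unique normal $*$-homomorphism $\pi$ from $(\Gamma(H,S)\dpr,\vphi_{(H,S)}) \ast (A,\psi)$ into $\rB(K)$, and the identity $\langle \pi(\,\cdot\,)\Om,\Om\rangle = \vphi_{(H,S)} \ast \psi$ holds on a dense $*$-subalgebra, hence everywhere.

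The main obstacle is the matching of higher correlation functions: one must check that $(\rm i)$--$(\rm ii)$ genuinely force the $n$-point functions $\langle a_0 L(\xi_1) a_1 \cdots L(\xi_n) a_n \Om, \Om\rangle$ to coincide with those of free creation operators on $\cF(H)$ coupled to the GNS representation of $A$. This is a routine but tedious induction on word length, and it is precisely what \cite[Corollary~2.5]{Sh97a} codifies, so the cleanest write-up would simply verify the hypotheses of that corollary.
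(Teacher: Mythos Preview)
Your proposal is correct and aligns with the paper's own treatment: the paper does not give an independent proof but simply states that the lemma ``is a direct consequence of \cite[Corollary 2.5]{Sh97a}'', which is exactly the reference you invoke after sketching the underlying Fock-space/freeness verification. One minor simplification: for the well-definedness of the linear extension you do not need condition~$(\rm iii)$ at all, since taking $a=1$ in $(\rm i)$ already gives $\bigl\|\sum_i c_i L(\xi_i)\eta\bigr\|^2 = \bigl\|\sum_i c_i \xi_i\bigr\|^2 \|\eta\|^2$ for \emph{every} $\eta\in K$.
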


Using Lemma \ref{lem.model}, we can prove Proposition \ref{prop.corner-free-product-aw}.

\begin{proof}[Proof of Proposition \ref{prop.corner-free-product-aw}]
Since $A^\psi$ is a factor, we can choose partial isometries $v_i \in A^\psi$, $i \geq 1$, such that $v_i^* v_i \leq p$, $\sum_{i=1}^\infty v_i v_i^* = 1$ and $\psi(v_i^* v_i) = \psi(p)/n_i$ for some integers $n_i \geq 1$. We can then also choose partial isometries $w_{is} \in p A^\psi p$, $s = 1,\ldots, n_i$, such that $w_{is} w_{is}^* = v_i^* v_i$ for all $s$ and $\sum_{s=1}^{n_i} w_{is}^* w_{is} = p$.

Since $(M,\vphi)$ is a free Araki--Woods factor with infinite multiplicity, we can choose an involution $S_0$ on a Hilbert space $H_0$ and realize $(M,\vphi)$ as $\Gamma(H,S)\dpr$, where $H=H_0 \ot \ell^2(\N^2)$ and $S$ is given by $S(\xi \ot \delta_{kl}) = S_0(\xi) \ot \delta_{lk}$ for all $\xi \in \rD(S_0)$ and all $k,l \geq 1$. We then consider the standard free product representation for $\Gamma(H,S)\dpr \ast A$ on the Hilbert space $K$ with vacuum vector $\Om$. Note that $p \bigl( \Gamma(H ,S)\dpr \ast A \bigr) p$ is generated by
\begin{equation}\label{eq.generators}
p A p \cup \Bigl\{ v_i^* \bigl( \ell(\xi \ot \delta_{kl}) + \ell(S_0(\xi) \ot \delta_{lk})^* \bigr) v_j \Bigm| i,j,k,l \geq 1 , \xi \in \rD(S_0) \Bigr\} \; .
\end{equation}
For all $k,l \geq 0$, $i,j \geq 1$ and $\xi \in H$, define
$$L_{ijkl}(\xi) = \psi(p)^{-1/2} \sum_{s=1}^{n_i} \sum_{t=1}^{n_j} w_{is}^* v_i^* \, \ell(\xi \ot \delta_{n_i k + s , n_j l + t}) \, v_j w_{jt} \; .$$
A direct computation shows that
$$L_{i'j'k'l'}(\xi')^* \, a \, L_{ijkl}(\xi) = \delta_{ijkl,i'j'k'l'} \, \langle \xi,\xi' \rangle \, \psi_p(a) \, p$$
for all $i,j,i',j' \geq 1$, $k,l,k',l' \geq 0$, $\xi,\xi' \in H$ and $a \in pAp$.

Applying Lemma \ref{lem.model} to the Hilbert space $H_1 = H \ot \ell^2(\N^2 \times \N_0^2)$ with involution $S_1(\xi \ot \delta_{ijkl}) = S_0(\xi) \ot \delta_{jilk}$, it follows that $\Gamma(H_1,S_1)\dpr \ast pAp$ can be realized as the von Neumann algebra $N$ generated by
$$p A p \cup \Bigl\{ L_{ijkl}(\xi) + L_{jilk}(S_0(\xi))^* \Bigm| i,j \geq 1, k,l\geq 0, \xi \in \rD(S_0) \Bigr\} \; ,$$
with the free product state being implemented by $\psi(p)^{-1/2} p \Om$.

Note that
$$w_{is} \; \bigl( L_{ijkl}(\xi) + L_{jilk}(S_0(\xi))^* \bigr) \; w_{jt}^* = \psi(p)^{-1/2} \; v_i^* \; \bigl( \ell(\xi \ot \delta_{n_i k + s, n_j l + t}) + \ell(S_0(\xi) \ot \delta_{n_j l + t, n_i k + s})^* \bigr) \; v_j \; .$$
For fixed $i,j \geq 1$, the parameters $n_i k + s$ and $n_j l + t$ with $k,l \geq 0$, $s=1,\ldots,n_i$ and $t=1,\ldots,n_j$ exactly run through $\N^2$. So, we find back the generating set of \eqref{eq.generators} and conclude that $p\bigl( \Gamma(H ,S)\dpr \ast A \bigr) p$ equals $\Gamma(H_1,S_1)\dpr \ast pAp$ in a state preserving way. Since also $\Gamma(H_1,S_1)\dpr \cong (M,\vphi)$ in a state preserving way, this concludes the proof of the proposition.
\end{proof}

\section{Proofs of Theorem \ref{thmA} and Corollaries \ref{corB}, \ref{corC}, \ref{corD}}

Combining Corollary \ref{cor.key} with the deformation/rigidity theorems for free Araki--Woods factors and for free product factors obtained in \cite{HR10,HU15}, we get the following theorem.

\begin{thm}\label{thm.meta}
Let $(M,\vphi)$ be either a free Araki--Woods factor with its free quasi-free state or a free product $\ast_n (M_n,\vphi_n)$ of amenable von Neumann algebras equipped with the free product state. Let $\psi \in M_*$ be any faithful normal state on $M$ and denote by $[\rD \psi : \rD \vphi]_t$ Connes' Radon--Nikodym $1$-cocycle between $\psi$ and $\vphi$. Let $z \in \cZ(M^\psi)$ be the central projection such that $M^\psi (1-z)$ is amenable and $M^\psi z$ has no amenable direct summand.

There exists a sequence of partial isometries $v_n \in M$ such that the projection $q_n = v_n v_n^*$ belongs to $M^\psi$, the projection $p_n = v_n^* v_n$ belongs to $M^\vphi$, $\sum_n q_n = z$,
$$q_n \, [\rD\psi : \rD\vphi]_t = \lambda_n^{\ri t} \, v_n \, \sigma_t^\vphi(v_n^*) \quad \text{and} \quad \psi q_n = \lambda_n \, v_n \vphi v_n^* \; ,$$
with $\lambda_n = \psi(q_n)/\vphi(p_n)$.
\end{thm}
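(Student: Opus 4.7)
The plan is to combine Corollary \ref{cor.key} with the deformation/rigidity theorems from \cite{HR10, HU15} via a maximality (Zorn) argument. The strategy is to show that for every nonzero projection $q \leq z$ in $M^\psi$, the intertwining hypothesis $(\rm i)$ of Corollary \ref{cor.key} holds, so that we can produce a partial isometry with source below $q$, and then exhaust $z$ by maximality.

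First I would set up the reduction. Given a nonzero projection $q \in M^\psi$ with $q \leq z$, suppose we can find a nonzero finite trace projection $r \in \rL_\psi(\R)$ such that
$$\Pi_{\vphi,\psi}(\rL_\psi(\R) q r) \prec_{\core_\vphi(M)} \rL_\vphi(\R).$$
Then Corollary \ref{cor.key} applied to the state $\psi$ and the projection $q$ yields a nonzero partial isometry $w \in M$ with $ww^* \in M^\vphi$, $w^*w \in qM^\psi q$, and $\psi(w^*w)^{-1}\, \psi(w^*w \,\cdot\,) = \vphi(ww^*)^{-1}\, w^*\vphi w$. Setting $v := w^*$ gives a partial isometry with $vv^* = w^*w \in M^\psi$, $vv^* \leq q \leq z$, and $v^*v = ww^* \in M^\vphi$, and the state relation in the statement with scalar $\lambda = \psi(vv^*)/\vphi(v^*v)$. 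The accompanying cocycle relation $v w_t = \lambda^{\ri t}\,\sigma_t^\vphi(v)$ is read off from the proof of Theorem \ref{thm-key} in the case $P = \C 1$: there $\theta$ becomes a positive scalar functional on a $1$-dimensional corner, so $u_t = [\rD\theta : \rD\vphi|_{\C 1}]_t$ is the scalar $\lambda^{\ri t}$. Multiplying $v w_t = \lambda^{\ri t} \sigma_t^\vphi(v)$ on the left by $v_n := v^*$ yields exactly $q_n\, [\rD\psi : \rD\vphi]_t = \lambda_n^{\ri t}\, v_n \sigma_t^\vphi(v_n^*)$.

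Next I would verify the intertwining above, which is the main obstacle. Under the hypotheses on $(M,\vphi)$, either free Araki--Woods or free product of amenable algebras with their canonical states, the structural dichotomies proved in \cite{HR10, HU15} for subalgebras of the continuous core say, roughly: if a von Neumann subalgebra $B \subset p \core_\vphi(M) p$ (with $p$ a finite trace projection in $B$) does not satisfy $B \prec_{\core_\vphi(M)} \rL_\vphi(\R)$, then the relative commutant/quasi-normalizer of $B$ inside $p \core_\vphi(M) p$ is amenable. I would apply this to $B = \Pi_{\vphi,\psi}(\rL_\psi(\R) qr)$. The crucial point is that, after transport by $\Pi_{\vphi,\psi}$, the relative commutant of $\rL_\psi(\R) q$ in $q \core_\psi(M) q$ contains $\pi_\psi(qM^\psi q)$. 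Hence, failure of the intertwining $(\rm i)$ would force $qM^\psi q$ to be amenable, contradicting $0 \neq q \leq z$ and the defining property that $M^\psi z$ has no amenable direct summand. (One takes the projection $r$ small enough, via the Lemma \ref{lem.technical} trick if needed, to land in a finite trace corner where the cited theorems apply.)

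Finally I would run the Zorn/maximality step. Let $\mathcal F$ be the family of collections $\{v_n\}$ of partial isometries in $M$ satisfying the stated conditions, with pairwise orthogonal source projections $q_n = v_n v_n^* \leq z$ in $M^\psi$. Partially order $\mathcal F$ by inclusion and take a maximal element; by $\sigma$-finiteness of $M$, such a maximal family is automatically countable. Set $z' := \sum_n q_n \leq z$. If $z' < z$, then $q := z - z'$ is a nonzero projection in $M^\psi$ with $q \leq z$; applying the intertwining of the previous paragraph and Corollary \ref{cor.key} to $q$ produces a partial isometry whose source projection lies below $q$ and is orthogonal to all existing $q_n$, contradicting maximality. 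Hence $\sum_n q_n = z$, and all the required properties hold.

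The main technical obstacle is squarely the deformation/rigidity step: one must match the precise formulation of the dichotomy theorems in \cite{HR10} and \cite{HU15} (which are stated for concrete subalgebras of the core, often via normalizers or under a commuting square condition) to our abelian subalgebra $\Pi_{\vphi,\psi}(\rL_\psi(\R) q r)$, and verify that the noncommutant side of the dichotomy really captures all of $qM^\psi q$, so that the non-amenability of $qM^\psi q$ forces the intertwining $(\rm i)$. Once this identification is made, the remaining arguments (Corollary \ref{cor.key}, maximality, reading off the cocycle) are essentially formal.
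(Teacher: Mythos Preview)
Your proposal is correct and follows essentially the same route as the paper: fix a nonzero $q \leq z$, pick a finite trace projection $r_0 \in \rL_\psi(\R)$, observe that $\Pi_{\vphi,\psi}(\rL_\psi(\R))r$ commutes with the image of $qM^\psi q$ (which has no amenable direct summand since $q \leq z$), invoke \cite[Theorem 5.2]{HR10} or \cite[Theorem 4.3]{HU15} to obtain the intertwining into $\rL_\vphi(\R)$, apply Corollary \ref{cor.key} (equivalently Theorem \ref{thm-key} with $P=\C 1$) to produce the partial isometry and the cocycle relation, and conclude by maximality. The only cosmetic difference is that the paper phrases the dichotomy in the forward direction (nonamenable commuting subalgebra forces embedding) rather than the contrapositive you state, and cites Theorem \ref{thm-key} directly; the content is identical.
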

\begin{proof}
Let $q \in M^\psi$ be a nonzero projection such that $q M^\psi q$ has no amenable direct summand. Let $r_0 \in \rL_\psi(\R)$ be a nonzero finite trace projection. Put $r = \Pi_{\vphi,\psi}(q r_0)$. Then $r$ is a nonzero finite trace projection in the core $c_\vphi(M)$ and $\Pi_{\vphi,\psi}(\rL_\psi(\R)) r$ commutes with $q M^\psi q r$. Since $q M^\psi q r$ has no amenable direct summand, it follows from \cite[Theorem 5.2]{HR10} (in the case where $M$ is a free Araki--Woods factor) and \cite[Theorem 4.3]{HU15} (in the case where $M$ is a free product of amenable von Neumann algebras) that $\Pi_{\vphi,\psi}(\rL_\psi(\R)) r \prec_{c_\vphi(M)} \rL_\vphi(\R)$.

By Theorem \ref{thm-key}, we find a nonzero partial isometry $v \in q M$ such that the projection $q_0 = vv^*$ belongs to $M^\psi$, the projection $p = v^* v$ belongs to $M^\vphi$ and $\psi q = \lambda \, v \vphi v^*$. In particular, $\lambda = \psi(q_0)/\vphi(p)$ and $q_0 \, [\rD \psi : \rD \vphi]_t = \lambda^{\ri t} \, v \sigma_t^\vphi(v^*)$.

Since $q \in M^\psi$ was an arbitrary nonzero projection such that $q M^\psi q$ has no amenable direct summand, the theorem follows by a maximality argument.
\end{proof}

In order to apply Theorem \ref{thm.meta} to the classification of free Araki--Woods factors, we need the following description of the centralizer of the free quasi-free state.

\begin{remark}\label{rem.centralizer}
When $M = \Gamma(\mu,m)^{\prime\prime}$ is an arbitrary free Araki--Woods factor with free quasi-free state $\vphi = \vphi_{\mu,m}$, the centralizer $M^\vphi$ can be described as follows. Denote by $M_a = \Gamma(\mu_a,m)^{\prime\prime}$ the almost periodic part of $M$. First note that $M^\vphi = M_a^\vphi$. So if $\mu_a = 0$, we have $M^\vphi = \C 1$. If $\mu_a$ is concentrated on $\{0\}$, we conclude that $M^\vphi = M_a = \rL(\F_{m(0)})$, where the last isomorphism follows because the free Araki--Woods factor associated with the $m$-dimensional trivial representation, i.e.\ $\Gamma(\delta_0,m)\dpr$, is isomorphic with $\rL(\F_m)$.  When $\mu_a(\log \lambda) > 0$ for some $0 < \lambda < 1$, there is a state preserving inclusion $T_\lambda \subset M_a$, where $T_\lambda$ is the unique free Araki--Woods factor of type III$_\lambda$ (see \cite[Section 4]{Sh96}). It then follows from \cite[Corollary 6.8]{Sh96} that $M^\vphi$ is a factor that contains a copy of the free group factor $\rL(\F_\infty)$, so that $M^\vphi$ is nonamenable. Actually, using \cite{Dy96}, we get that $M^\vphi \cong \rL(\F_\infty)$ in this case.
\end{remark}

Theorem \ref{thmA} is a particular case of the following more general result.

\begin{thm}
Let $\mu, \nu$ be finite symmetric Borel measures on $\R$ and $m, n : \R \to \N \cup \{+\infty\}$ symmetric Borel multiplicity functions. Assume that $\nu_a \neq 0$ and either $\supp(\nu_a) \neq \{0\}$ or $\supp(\nu_a) = \{0\}$ with $n(0) \geq 2$.

If the free Araki--Woods factors $\Gamma(\mu, m)\dpr$ and $\Gamma(\nu, n)\dpr$ are isomorphic then there exist nonzero projections $p \in (\Gamma(\mu, m)\dpr)^{\varphi_{\mu, m}}$ and $q \in (\Gamma(\nu, n)\dpr)^{\varphi_{\nu, n}}$ and a state preserving isomorphism
$$\left(p \, \Gamma(\mu, m)\dpr \, p, (\varphi_{\mu, m})_p \right) \cong \left(q \, \Gamma(\nu, n)\dpr \, q, (\varphi_{\nu, n})_q \right)$$
where $(\varphi_{\mu, m})_p = \frac{\varphi_{\mu, m}(p \, \cdot \, p)}{\varphi_{\mu, m}(p)}$ and $(\varphi_{\nu, n})_q = \frac{\varphi_{\nu, n}(q \, \cdot \, q)}{\varphi_{\nu, n}(q)}$.

In particular, the joint measure classes $\mathcal C(\bigvee_{k \geq 1} \mu^{\ast k})$ and $\mathcal C(\bigvee_{k \geq 1} \nu^{\ast k})$ are equal.

Moreover, in case $\supp(\nu_a) \neq \{0\}$ or $\supp(\nu_a) = \{0\}$ with $n(0)=+\infty$, there exists a state preserving isomorphism $(\Gamma(\mu,m)\dpr,\vphi_{\mu,m}) \cong (\Gamma(\nu,n)\dpr,\vphi_{\nu,n})$.
\end{thm}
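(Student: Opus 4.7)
My strategy is to transport the state from $N$ to $M$ via the assumed isomorphism and then compare the two states on $M$ using Theorem \ref{thm.meta}. Let $\theta : M \to N$ be the isomorphism, and set $\psi' := \psi \circ \theta \in M_*$ with $\psi = \vphi_{\nu,n}$. Remark \ref{rem.centralizer} together with the hypothesis on $(\nu,n)$ forces $N^\psi$ to be a nonamenable ${\rm II}_1$ factor---either $\rL(\F_\infty)$ when $\supp(\nu_a) \neq \{0\}$, or $\rL(\F_{n(0)})$ with $n(0) \geq 2$ otherwise---and hence so is $M^{\psi'} \cong N^\psi$. Thus the central projection $z \in \cZ(M^{\psi'})$ of Theorem \ref{thm.meta} equals $1$, and applying that theorem to the pair $(\vphi,\psi')$ yields partial isometries $v_k \in M$ with $q_k = v_k v_k^* \in M^{\psi'}$, $p_k = v_k^* v_k \in M^\vphi$, $\sum_k q_k = 1$, and $\psi' q_k = \lambda_k\, v_k \vphi v_k^*$. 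This last identity is precisely a state preserving $*$-isomorphism $q_k M q_k \to p_k M p_k$ given by $x \mapsto v_k^* x v_k$. Setting $p := p_1$, $q := \theta(q_1)$ and composing with $\theta$ delivers the first conclusion: $(pMp,\vphi_p) \cong (qNq,\psi_q)$ state preservingly.

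For the measure class equality, I use the Fock space identification $\rL^2(M,\vphi) \cong \cF(H) = \C\Om \oplus \bigoplus_{k \geq 1} H^{\otimes k}$ under which $\Delta_\vphi$ is the second quantisation $\bigoplus_k \Delta^{\otimes k}$. Since $\log(\Delta^{\otimes k})$ has spectral measure $\mu^{\ast k}$, this gives $\cC(\Delta_\vphi|_{\rL^2(M) \ominus \C\Om}) = \cC(\bigvee_{k \geq 1} \mu^{\ast k})$, and analogously for $\psi$ on $N$. The corner isomorphism directly yields $\cC(\Delta_{\vphi_p}|_{\rL^2(pMp) \ominus \C}) = \cC(\Delta_{\psi_q}|_{\rL^2(qNq) \ominus \C})$. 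On the $N$ side, Lemma \ref{lem-measure} applied with $P = \C 1$ is available because $N^\psi$ is a ${\rm II}_1$ factor. On the $M$ side, observe that $(pMp)^{\vphi_p} = p M^\vphi p$, which via the corner isomorphism equals the nonamenable ${\rm II}_1$ factor $q N^\psi q$; by Remark \ref{rem.centralizer} this rules out $\mu_a = 0$ and $\mu_a = m(0) \delta_0$ with $m(0) = 1$, leaving only cases where $M^\vphi$ is itself a ${\rm II}_1$ factor, so Lemma \ref{lem-measure} applies on the $M$ side too. Chaining the equalities yields $\cC(\bigvee_k \mu^{\ast k}) = \cC(\bigvee_k \nu^{\ast k})$; in particular the atomic structures of both joins agree, which forces $\Lambda(\mu_a) = \Lambda(\nu_a)$ (indeed, the atoms of $\bigvee_k \rho^{\ast k}$ are exactly the subgroup generated by the atoms of $\rho_a$).

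Finally, the ``moreover'' state preserving isomorphism is obtained by promoting the corner isomorphism via a stability argument. If $\supp(\nu_a) \neq \{0\}$, the common group $\Lambda := \Lambda(\mu_a) = \Lambda(\nu_a)$ is nontrivial, so Theorem \ref{thm.isom} rewrites both $(M,\vphi)$ and $(N,\psi)$ state preservingly as infinite multiplicity free Araki--Woods factors $\Gamma(\rho \ast \delta_\Lambda, +\infty)\dpr$ for $\rho \in \{\mu,\nu\}$. The tensor product absorption $\Gamma(\rho \ast \delta_\Lambda, +\infty)\dpr \cong \Gamma(\rho \ast \delta_\Lambda, +\infty)\dpr \ovt B_{\exp(-t)}$ established inside the proof of Theorem \ref{thm.isom} (for any nonzero atom $t$ of $\rho \ast \delta_\Lambda$) then supplies the required stability: the corner isomorphism upgrades to $(M,\vphi) \cong (N,\psi)$ state preservingly by choosing projections in the absorbed type I factor. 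In the remaining case $\supp(\nu_a) = \{0\}$ with $n(0) = +\infty$, I use the free product decomposition $N = \Gamma(\nu_c, n|_{\supp \nu_c})\dpr \ast \rL(\F_\infty)$ together with Proposition \ref{prop.corner-free-product-aw} to produce the analogous corner stability on the $N$ side, and a parallel decomposition of $M$---whose infinite multiplicity structure is forced by the centralizer identification $p M^\vphi p \cong \rL(\F_\infty)$ from paragraph one combined with Remark \ref{rem.centralizer}---completes the argument. I expect the main obstacle to be precisely this final stability step: transferring the infinite multiplicity or $\rL(\F_\infty)$-factor structure from $(\nu,n)$ to $(\mu,m)$ requires combining the measure class equality with a delicate analysis of the centralizer on the $M$ side.
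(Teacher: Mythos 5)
Your first two paragraphs follow the paper's own argument almost exactly: transport the state through the isomorphism, note via Remark \ref{rem.centralizer} that the transported state has nonamenable centralizer, apply Theorem \ref{thm.meta} to produce the state preserving corner isomorphism, deduce that $M^{\vphi_{\mu,m}}$ must be a ${\rm II_1}$ factor, and chain Lemma \ref{lem-measure} through the corner to get the measure class equality. One small slip there: Lemma \ref{lem-measure} requires $M^\vphi \subset P$, so you cannot literally take $P = \C 1$; but you only need the first conclusion $\cC(\Delta_\vphi) = \cC(\Delta_{\vphi_p})$, which does not involve $P$, and the extra $\delta_0$ coming from the vacuum vector is harmless because $\mu_a \neq 0$ and $\nu_a \neq 0$ force $\delta_0 \prec \mu^{\ast 2}$ and $\delta_0 \prec \nu^{\ast 2}$. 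This is cosmetic and easily repaired.

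The genuine gap is in the ``moreover'' step, and it is exactly where you predicted trouble. The tensor absorption $\Gamma(\rho \ast \delta_\Lambda,+\infty)\dpr \cong \Gamma(\rho \ast \delta_\Lambda,+\infty)\dpr \ovt B_{\exp(-t)}$ only produces state preserving identifications of corners cut by diagonal projections of $B_{\exp(-t)}$, i.e.\ corners whose state values lie in the countable set generated by $(1-a)a^k$ with $a = \exp(-t)$. But the corner isomorphism coming from Theorem \ref{thm.meta} lands on a projection $p \in M^\vphi$ whose value $\vphi(p)$ is an arbitrary real in $(0,1]$, and there is no way to match it against that countable set (the ratio $\theta(q_0)/\vphi(p)$ is fixed by the intertwining and will generically miss every quotient $(1-a)a^k/(1-a)a^j$). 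The paper resolves precisely this point by a different absorption: it first establishes $(N,\theta) \cong (N,\theta) \ast (\rL(\F_\infty),\tau)$ in both cases, then combines Proposition \ref{prop.corner-free-product-aw} with the fact that the fundamental group of $\rL(\F_\infty)$ equals $\R^\ast_+$ (R\u{a}dulescu) to show that $(q_0 N q_0,\theta_{q_0}) \cong (q_1 N q_1,\theta_{q_1})$ for \emph{every} nonzero $q_1 \in N^\theta$ --- the fundamental group is what lets one hit an arbitrary trace value. It then shrinks $p$ so that $\vphi(p) = 1/k$, picks $q_1$ with $\theta(q_1) = 1/k$, and passes to $k \times k$ matrices using the factoriality of both centralizers to undo the corner. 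Your proposal is missing both the $\R^\ast_+$-worth of corner isomorphisms and this final amplification step; without them the corner isomorphism cannot be promoted to a global one. (Your treatment of the case $\supp(\nu_a)=\{0\}$, $n(0)=+\infty$ gestures at Proposition \ref{prop.corner-free-product-aw} but likewise omits the fundamental group ingredient, which is indispensable.)
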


\begin{proof}
Put $(M,\vphi) := (\Gamma(\mu, m)\dpr,\vphi_{\mu,m})$ and $(N,\theta) := (\Gamma(\nu, n)\dpr,\vphi_{\nu,n})$. Let $\pi : M \to N$ be any isomorphism between $M$ and $N$. Put $\psi := \theta \circ \pi$. By our assumptions on $\nu$ and Remark \ref{rem.centralizer}, the centralizer $M^\psi$ is nonamenable. By Theorem \ref{thm.meta}, we find a nonzero partial isometry $v \in M$ such that $p = v^*v \in M^\varphi$, $q = vv^* \in M^\psi$ and $\Ad(v) : (pMp, \varphi_p) \to (qMq, \psi_q)$ is state preserving. It follows in particular that $p M^\varphi p = (pMp)^{\varphi_p} \cong (qMq)^{\psi_q} = qM^\psi q$ is a nonamenable ${\rm II_1}$ factor. So $M^\vphi$ cannot be abelian and Remark \ref{rem.centralizer} implies that $M^\vphi$ is a ${\rm II_1}$ factor. Applying Lemma \ref{lem-measure} twice, we have
$$\mathcal C(\bigvee_{k \in \N} \mu^{\ast k}) = \mathcal C(\Delta_\varphi) = \mathcal C(\Delta_{\varphi_p}) = \mathcal C(\Delta_{\psi_q}) = \mathcal C(\Delta_\psi) = \mathcal C(\bigvee_{k \in \N} \nu^{\ast k}).$$
This implies that $\mathcal C(\bigvee_{k \geq 1} \mu^{\ast k}) = \mathcal C(\bigvee_{k \geq 1} \nu^{\ast k})$.

Assume now that either $\supp(\nu_a) \neq \{0\}$ or $\supp(\nu_a) = \{0\}$ with $n(0)=+\infty$. In the latter case where $\nu(\{0\}) > 0$ and $n(0) = +\infty$, we use that the free Araki--Woods functor $\Gamma$ turns direct sums into free products (see \cite[Theorem 2.11]{Sh96}) and conclude that there exists a state preserving isomorphism
\begin{equation}\label{eq.stable-infty}
(N,\theta) \cong (N,\theta) \ast (\rL(\F_\infty),\tau) \; .
\end{equation}
In the case where $\nu$ has at least one atom different from $0$, it follows similarly from the classification of almost periodic free Araki--Woods factors (see \cite{Sh96}) that \eqref{eq.stable-infty} holds.

Put $q_0 = \pi(q)$. Above, we have proved that there exists a state preserving isomorphism $(pMp,\vphi_p) \cong (q_0 N q_0 , \theta_{q_0})$. Taking a smaller $p$ if needed, we may assume that $\vphi(p) = 1/k$ for some integer $k \geq 1$. Combining \eqref{eq.stable-infty} with Proposition \ref{prop.corner-free-product-aw} and the fact that the fundamental group of $\rL(\F_\infty)$ equals $\R^\ast_+$ (see \cite{Ra91}), it follows that there exists a state preserving isomorphism $(q_0 N q_0,\theta_{q_0}) \cong (q_1 N q_1,\theta_{q_1})$ whenever $q_1 \in N^\theta$ is a nonzero projection.

Choose a projection $q_1 \in N^\theta$ with $\theta(q_1) = 1/k$. So, there exists a state preserving isomorphism $(pMp,\vphi_p) \cong (q_1 N q_1 , \theta_{q_1})$. Since $\vphi(p) = 1/k = \theta(q_1)$ and since both $M^\vphi$ and $N^\theta$ are factors, taking $k \times k$ matrices, we find a state preserving isomorphism $(M,\vphi) \cong (N,\theta)$.
\end{proof}

\begin{proof}[Proof of Corollary \ref{corB}]
Let $\mu, \nu \in \mathcal S(\R)$ such that $\Lambda(\mu_a) = \Lambda(\nu_a) =: \Lambda$ and $\mathcal C(\mu_c \ast \delta_\Lambda) = \mathcal C(\nu_c \ast \delta_{\Lambda})$. Let $m, n : \R \to \N \cup \{+\infty\}$ be any symmetric Borel multiplicity functions. Then we have $\mathcal C(\mu \ast \delta_\Lambda) = \mathcal C(\nu \ast \delta_\Lambda)$. By Theorem \ref{thm.isom}, there is a state preserving isomorphism $\Gamma(\mu,m)\dpr \cong \Gamma(\nu,n)\dpr$.

Conversely, let $\mu, \nu \in \mathcal S(\R)$ and $m, n : \R \to \N \cup \{+\infty\}$ be any symmetric Borel multiplicity functions such that $\Gamma(\mu, m)\dpr \cong \Gamma(\nu, n)\dpr$. By Theorem \ref{thmA}, we have that $\mathcal C(\bigvee_{k \geq 1} \mu^{\ast k}) = \mathcal C(\bigvee_{k \geq 1} \nu^{\ast k})$. Since for every $k \geq 1$, we have $\mu_c^{\ast k} \prec \mu_c$ and $\nu_c^{\ast k} \prec \nu_c$, it follows that
\begin{align*}
\mathcal C(\mu_c \ast \delta_{\Lambda(\mu_a)} \vee \delta_{\Lambda(\mu_a)}) &= \mathcal C(\bigvee_{k \geq 1} \mu^{\ast k}) \\
& = \mathcal C(\bigvee_{k \geq 1} \nu^{\ast k}) \\
&= \mathcal C(\nu_c \ast \delta_{\Lambda(\nu_a)} \vee \delta_{\Lambda(\nu_a)}).
\end{align*}
This implies that $\Lambda(\mu_a) = \Lambda(\nu_a)$ and $\mathcal C(\mu_c \ast \delta_{\Lambda(\mu_a)}) = \mathcal C(\nu_c \ast \delta_{\Lambda(\nu_a)})$.
\end{proof}

\begin{proof}[Proof of Corollary \ref{corC}]
Put $(M, \varphi) := (\Gamma(\mu, m)\dpr, \varphi_{\mu, m})$. Let $\psi \in M_*$ be a faithful normal state such that $M^\psi$ is nonamenable. By Theorem \ref{thm.meta}, we find a nonzero partial isometry $v \in M$ such that $q = v v^* \in M^\psi$, $p = v^* v \in M^\vphi$, $q M^\psi q$ has no amenable direct summand and $\psi q = \lambda \, v \vphi v^*$ with $\lambda = \psi(q)/\vphi(p)$. It follows that $p M^\vphi p \cong q M^\psi q$ has no amenable direct summand. By Remark \ref{rem.centralizer}, this means that either $\mu_a$ has an atom different from $0$ or $\mu_a$ is concentrated on $\{0\}$ with $m(0)\geq 2$. Conversely, if $\mu_a$ satisfies these properties, it follows from Remark \ref{rem.centralizer} that the centralizer of the free quasi-free state is nonamenable.
\end{proof}

\begin{proof}[Proof of Corollary \ref{corD}]
By Corollary \ref{corC}, the von Neumann algebra $\Gamma(\lambda+\delta_0,1)\dpr$ has amenable centralizers while $\Gamma(\lambda+\delta_0,2)\dpr$ does not.
\end{proof}

\begin{example}\label{ex.many-measure}
Many different measures in the family $\cS(\R)$ of Corollary \ref{corB} can be constructed as follows. Let $K \subset \R$ be an \emph{independent} Borel set, meaning that every $n$-tuple of distinct elements in $K$ generates a free abelian group of rank $n$. By \cite[Theorems 5.1.4 and 5.2.2]{Ru62}, there exist compact independent $K \subset \R$ such that $K$ is homeomorphic to a Cantor set. Fix such a $K \subset \R$ and put $L = K \cup (-K)$. Also fix a countable subgroup $\Lambda < \R$.

For every continuous symmetric probability measure $\mu$ on $\R$ that is concentrated on $L$, define the measure class $\mutil$ on $\R$ given by
$$\mutil = \bigvee_{x \in \Lambda, n \geq 1} (x + \mu^{\ast n}) \; .$$
By construction, each $\mutil$ is a continuous symmetric measure class on $\R$ that is invariant under translation by $\Lambda$ and that satisfies $\mutil \ast \mutil \prec \mutil$.

Given continuous symmetric probability measures $\mu_1$ and $\mu_2$ that are concentrated on $L$, we claim that $\cC(\widetilde{\mu_1}) = \cC(\widetilde{\mu_2})$ if and only if $\cC(\mu_1) = \cC(\mu_2)$. One implication is obvious. The other implication is a consequence of the following result contained in \cite[Corollary 1]{LP97}~: if $\eta_1$ and $\eta_2$ are concentrated on $L$ and $\eta_1 \perp \eta_2$, then also $\eta_1 \perp (x + \eta_2^{\ast k})$ for all $x \in \R$ and all $k \geq 1$.

Choosing $\Lambda$ to be a nontrivial subgroup of $\R$ and applying Corollary \ref{corB}, for all continuous symmetric probability measures $\mu_1$ and $\mu_2$ concentrated on the Cantor set $L$, we find that
$$\Gamma(\widetilde{\mu_1} \vee \delta_\Lambda,m_1)\dpr \cong \Gamma(\widetilde{\mu_2} \vee \delta_\Lambda,m_2)\dpr \quad\text{iff}\quad \cC(\mu_1) = \cC(\mu_2) \; .$$

Adding the Lebesgue measure to $\mutil$, we claim that we also have
$$\Gamma(\lambda \vee \widetilde{\mu_1} \vee \delta_\Lambda,m_1)\dpr \cong \Gamma(\lambda \vee \widetilde{\mu_2} \vee \delta_\Lambda,m_2)\dpr \quad\text{iff}\quad \cC(\mu_1) = \cC(\mu_2) \; .$$
By \cite[Corollary 8.6]{Sh97b}, for all these free Araki--Woods factors, the $\tau$-invariant equals the usual topology on $\R$, so that they cannot be distinguished by Connes' invariants.

To prove the claim, define $L_n$ as the $n$-fold sum $L_n = L + \cdots + L$ and put $S = \bigcup_{n \geq 1} L_n$. Below we prove that $\lambda(S) = 0$. The claim then follows from Corollary \ref{corB}~: if $\cC(\lambda \vee \widetilde{\mu_1}) = \cC(\lambda \vee \widetilde{\mu_2})$, restricting to $S$, we get that $\cC(\widetilde{\mu_1}) = \cC(\widetilde{\mu_2})$. As proven above, this implies that $\cC(\mu_1) = \cC(\mu_2)$.

It remains to prove that $\lambda(L_n) = 0$ for all $n$. If for some $n \geq 1$, we have $\lambda(L_n) > 0$, then $L_{2n} = L_n - L_n$ contains a neighborhood of $0$. Every nonzero $x \in L_{2n}$ can be uniquely written as $x = \al_1 y_1 + \cdots + \al_k y_k$ with $k \geq 1$, $y_1,\ldots,y_k$ distinct elements in $K$ and $\al_i \in \Z \setminus \{0\}$ with $|\al_i| \leq 2n$ for all $i$. So if $x \in L_{2n}$ is nonzero, we have that $(2n+1)x \not\in L_{2n}$. Therefore, $L_{2n}$ does not contain a neighborhood of $0$ and it follows that $\lambda(L_n) = 0$ for all $n \geq 1$.
\end{example}

\section{Proof of Theorem~\ref{thmE}}

To prove Theorem \ref{thmE}, we combine \cite[Theorem 4.3]{HU15} and Theorem \ref{thm-key} with the following lemma. Whenever $\theta$ is a faithful normal state on a von Neumann algebra $M$, we denote by $M_{\ap,\theta}$ the von Neumann subalgebra of $M$ generated by the almost periodic part of $(\sigma_t^\theta)$.

\begin{lem}\label{lem.centr-free-product}
For $i=1,2$, let $(M_i,\vphi_i)$ be von Neumann algebras with a faithful normal state. Denote by $(M,\vphi) = (M_1,\vphi_1)\ast (M_2,\vphi_2)$ their free product. Denote by $\rE_{M_1} : M \recht M_1$ the unique $\vphi$-preserving conditional expectation. Let $\theta_1$ be a faithful normal state on $M_1$ and define $\theta = \theta_1 \circ \rE_{M_1}$. Let $q \in M^\theta$ be a projection.

There exist projections $q_0,q_1,\ldots$ with $q_0 \in M_1^{\theta_1}$ and $q_i \in M^\theta$ for all $i \geq 1$ such that
\begin{itemize}
\item [$(\rm i)$] $\sum_{i=0}^\infty q_i = q$,
\item [$(\rm ii)$] $q_0 M_{\ap,\theta} q_0 = q_0 M_{1,\ap,\theta_1} q_0$ and $q_0 M^\theta q_0 = q_0 M_1^{\theta_1} q_0$,
\item [$(\rm iii)$] for every $i \geq 1$, there exists a partial isometry $v_i \in M$ with $v_i v_i^* = q_i$, $v_i^* v_i \in M^\vphi$ and
$$\frac{1}{\theta(q_i)} \, \theta q_i = \frac{1}{\vphi(v_i^* v_i)} \, v_i \vphi v_i^* \; .$$
\end{itemize}
\end{lem}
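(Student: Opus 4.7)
The plan is to follow the template of Theorem~\ref{thm.meta}: a Zorn exhaustion using Corollary~\ref{cor.key} to collect the partial isometries witnessing~(iii), combined with the amalgamated free product rigidity of \cite[Theorem~4.3]{HU15} to control the residual projection. The crucial preliminary observation is that, since $\rE_{M_1}(x) = \vphi_2(x)\, 1$ for every $x \in M_2$, the states $\vphi$ and $\theta$ coincide on $M_2$; hence the Connes cocycle $w_t = [\rD\theta : \rD\vphi]_t$ lies in $M_1$, and consequently the abelian subalgebra $\Pi_{\vphi,\theta}(\rL_\theta(\R))$ of $\core_\vphi(M)$ is already contained in $\core_\vphi(M_1)$.

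I would first use Zorn's lemma and the $\sigma$-finiteness of $M$ to extract a maximal family $\{v_i\}_{i \geq 1}$ of partial isometries in $M$ such that the $v_iv_i^* \in M^\theta$ are pairwise orthogonal projections below $q$, the $v_i^*v_i$ lie in $M^\vphi$, and the normalized state equality of~(iii) holds. Setting $q_0 = q - \sum_{i \geq 1} v_iv_i^*$, Corollary~\ref{cor.key} applied with $\psi = \theta$ (with the roles of $v$ and $v^*$ swapped) combined with maximality forces
\[
\Pi_{\vphi,\theta}(\rL_\theta(\R)\, f\, r) \not\prec_{\core_\vphi(M)} \rL_\vphi(\R)
\]
for every nonzero projection $f \leq q_0$ in $M^\theta$ and every nonzero finite trace projection $r \in \rL_\theta(\R)$.

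I would then apply the amalgamated free product rigidity of \cite[Theorem~4.3]{HU15} inside the semifinite decomposition $\core_\vphi(M) \cong \core_\vphi(M_1) \ast_{\rL_\vphi(\R)} \core_\vphi(M_2)$: the non-embedding above rules out the $\rL_\vphi(\R)$-alternative, and since $\Pi_{\vphi,\theta}(\rL_\theta(\R))$ already lies in $\core_\vphi(M_1)$ the remaining alternative yields $\Pi_{\vphi,\theta}(\rL_\theta(\R)\, f\, r) \prec_{\core_\vphi(M)} \core_\vphi(M_1)$ for a suitable $r$. Theorem~\ref{thm-key} with $P = M_1$ then produces a nonzero partial isometry $v \in M$, a faithful normal positive functional $\theta_0 \in (M_1)_*$, and a subprojection $q' \leq f$ in $M^\theta$ with $vv^* \in M^{\theta_0 \circ \rE_{M_1}}$, $v^*v = q'$, and $\theta q' = v^*(\theta_0 \circ \rE_{M_1})\, v$. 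Using that both cocycles $[\rD\theta : \rD\vphi]_t$ and $[\rD(\theta_0 \circ \rE_{M_1}) : \rD\vphi]_t$ lie in $M_1$, the equivariance $v\,[\rD\theta:\rD\vphi]_t = [\rD(\theta_0\circ\rE_{M_1}):\rD\vphi]_t \, \sigma_t^\vphi(v)$ will let me refine $v$ (for instance by taking the polar part of $\rE_{M_1}(v)$) into a partial isometry lying in $M_1$, so that $q' \in M_1^{\theta_1}$. A further maximality argument inside $q_0$ over such $M_1^{\theta_1}$-subprojections then yields $q_0 \in M_1^{\theta_1}$ together with the centralizer equality $q_0 M^\theta q_0 = q_0 M_1^{\theta_1} q_0$. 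The equality $q_0 M_{\ap,\theta} q_0 = q_0 M_{1,\ap,\theta_1} q_0$ for the almost periodic parts follows by the same argument applied to eigenoperators of $\sigma^\theta$, equivalently to fixed points of a suitably rescaled faithful normal state.

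The main obstacle will be the last refinement: carefully justifying that the partial isometry produced by Theorem~\ref{thm-key} can indeed be pushed into $M_1$ using that the relevant cocycles already lie there, and that a subsequent maximality argument really does exhaust $q_0$ by $M_1^{\theta_1}$-subprojections. The Zorn exhaustion and the application of free product rigidity proceed in close analogy with the proof of Theorem~\ref{thm.meta} given earlier in this paper.
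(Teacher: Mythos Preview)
Your approach has a genuine gap: \cite[Theorem~4.3]{HU15} requires a non\-amenability hypothesis that the lemma does not supply. In the template of Theorem~\ref{thm.meta}, the dichotomy is applied to $Q = \Pi_{\vphi,\psi}(\rL_\psi(\R) q r_0 \vee q M^\psi q r_0)$, and the conclusion relies on $Q$ having no amenable direct summand. Here $q \in M^\theta$ is an arbitrary projection, and $qM^\theta q$ may well be amenable (indeed $M^\theta$ could be $\C1$), so there is no algebra to which you can apply the rigidity theorem. Your sentence ``since $\Pi_{\vphi,\theta}(\rL_\theta(\R))$ already lies in $\core_\vphi(M_1)$ the remaining alternative yields\dots'' does not rescue this: the algebra you need to intertwine is $\Pi_{\vphi,\theta}(\rL_\theta(\R)\, f\, r)$, whose unit involves $f \in M^\theta$, which need not lie in $M_1$; and in any case an abelian algebra gives you no leverage for the \cite{HU15} alternative.

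The paper's proof is far more elementary and bypasses deformation/rigidity entirely. It works purely on the Hilbert space level: one first decomposes $H_1 = \rL^2(M_1)$ into the almost periodic and weakly mixing parts $e_0^\perp H_1$ and $e_0 H_1$ for the twisted representation $U_t(x\xi_{\vphi_1}) = u_t \sigma_t^{\vphi_1}(x)\xi_{\vphi_1}$, where $u_t = [\rD\theta_1:\rD\vphi_1]_t$. The explicit free product decomposition of $\rL^2(M)$ then shows that $\Delta_\theta^{\ri t}$ is weakly mixing on $e_0 H \ominus e_0 H_1$, so that $e_0 M_{\ap,\theta} e_0 = e_0 M_{1,\ap,\theta_1} e_0$ directly. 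After that, the maximality argument for (iii) is the same as yours, but the endgame is simply checking $q_0 \leq e_0$: if $q_0 e_j \neq 0$ for some $j \geq 1$, the polar part of $q_0 w_j$ (with $w_j \in M_1$ an eigenoperator for $U_t$) furnishes one more $v_i$, contradicting maximality. Nothing about pushing a partial isometry from $M$ into $M_1$ via $\rE_{M_1}$ is needed, and indeed your proposed refinement (taking the polar part of $\rE_{M_1}(v)$) is not justified: there is no reason $\rE_{M_1}(v)$ should be nonzero.
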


\begin{proof}
Fix standard representations $M_i \subset \rB(H_i)$. For every faithful normal state $\mu$ on $M_i$, denote by $\xi_\mu \in H_i$ the canonical unit vector that implements $\mu$.

Define $u_t = [\rD\theta_1 : \rD\vphi_1]_t \in \mathcal U(M_1)$. Note that also $[\rD\theta : \rD\vphi]_t = [\rD \theta_1 \circ \rE : \rD \vphi_1 \circ \rE]_t = u_t$ for all $t \in \R$. Let $e_1,e_2,\ldots$ be a maximal sequence of nonzero projections in $M_1^{\theta_1}$ such that $e_i e_j = 0$ whenever $i \neq j$ and such that for every $i \geq 1$, there exists a partial isometry $w_i \in M_1$ and a $\lambda_i > 0$ with $w_i w_i^* = e_i$ and $u_t \sigma_t^{\vphi_1}(w_i) = \lambda_i^{\ri t} w_i$ for all $t \in \R$. Define $e_0 = 1 - \sum_{i=1}^\infty e_i$. Then $e_0 \in M_1^{\theta_1}$. By construction, the unitary representation $(U_t)_{t \in \R}$ on $H_1$ given by $U_t (x \xi_{\vphi_1}) = u_t \sigma_t^{\vphi_1}(x) \xi_{\vphi_1}$ for all $x \in M_1$ is weakly mixing on $e_0 H_1$.

For $i = 1,2$, define $\Hcirc_i = H_i \ominus \C \xi_{\vphi_i}$. For every $k \geq 1$, define the Hilbert space
$$K_k = H_1 \ot \underbrace{\Hcirc_2 \ot \Hcirc_1 \ot \hspace{8mm} \cdots \hspace{8mm} \ot \Hcirc_1 \ot \Hcirc_2}_{\text{$k$ times $\Hcirc_2$ and $k-1$ times $\Hcirc_1$, alternatingly}} \ot H_1 \; .$$
We can then identify the standard Hilbert space $H$ for $M$ with
$$H = H_1 \oplus \bigoplus_{k = 1}^\infty K_k \; .$$
Under this identification, $\xi_\vphi = \xi_{\vphi_1} \in H_1$ and $\xi_\theta = \xi_{\theta_1} \in H_1$. Denote by $(V_t)_{t \in \R}$ the unitary representation on $H_1$ given by $V_t(x \xi_{\theta_1}) = \sigma_t^{\vphi_1}(x) u_t^* \xi_{\theta_1}$ for all $x \in M_1$. Under the above identification of $H$, we get that
$$\Delta_{\theta}^{\ri t} = \Delta_{\theta_1}^{\ri t} \oplus \bigoplus_{k=1}^\infty \Bigl( U_t \ot \Delta_{\vphi_2}^{\ri t} \ot \Delta_{\vphi_1}^{\ri t} \ot \cdots \ot \Delta_{\vphi_1}^{\ri t} \ot \Delta_{\vphi_2}^{\ri t} \ot V_t \Bigr) \; .$$
Since $(U_t)_{t \in \R}$ is weakly mixing on $e_0 H_1$, we conclude that $(\Delta_{\theta}^{\ri t})_{t \in \R}$ is weakly mixing on $e_0 H \ominus e_0 H_1$.
It follows that
\begin{equation}\label{eq.cont-ap}
e_0 M_{\ap,\theta} e_0 = e_0 M_{1,\ap,\theta_1} e_0 \quad\text{and}\quad e_0 M^\theta e_0 = e_0 M_1^{\theta_1} e_0 \; .
\end{equation}

Let $q_1,q_2,\ldots$ be a maximal sequence of nonzero projections in $q M^\theta q$ such that $q_i q_j = 0$ if $i \neq j$ and such that statement~$(\rm iii)$ in the lemma holds for every $i \geq 1$. Define $q_0 = q - \sum_{i=1}^\infty q_i$. Then $q_0 \in M^\theta$. We prove that $q_0 \leq e_0$. Once this is proven, it follows from \eqref{eq.cont-ap} that $q_0 \in M_1^{\theta_1}$ and that $q_0 M_{\ap,\theta} q_0 = q_0 M_{1,\ap,\theta_1} q_0$, so that the lemma follows.

If $q_0 \not\leq e_0$, we find $j \geq 1$ such that $q_0 e_j \neq 0$. Then the polar part $v$ of $q_0 w_j$ is a nonzero partial isometry in $M$ satisfying $vv^* \leq q_0$ and  $u_t \sigma_t^{\vphi_1}(v) = \lambda_j^{\ri t} v$ for all $t \in \R$. So, the projection $vv^*$ could be added to the sequence $q_1,q_2,\ldots$, contradicting its maximality. Therefore, $q_0 \leq e_0$ and the lemma is proved.
\end{proof}

Theorem \ref{thmE} will be an immediate consequence of the following more technical proposition that will also be used in Section \ref{sec.further} below.

\begin{prop}\label{prop.nonamen-centr-free-product}
For $i=1,2$, let $(M_i,\vphi_i)$ be von Neumann algebras with a faithful normal state. Denote by $(M,\vphi) = (M_1,\vphi_1)\ast (M_2,\vphi_2)$ their free product and by $\rE_{M_i} : M \recht M_i$ the unique $\vphi$-preserving conditional expectation. Let $\psi$ be a faithful normal state on $M$. Define the set of projections $\cP \subset M^\psi$ given by $\cP = \cP_1 \cup \cP_2 \cup \cP_3$ where
\begin{itemize}
\item for $i=1,2$, $\cP_i$ consists of the projections $q \in M^\psi$ for which there exists a partial isometry $v \in M$ and a faithful normal state $\theta_i$ on $M_i$ with $v^* v = q$, $e = vv^* \in M_i^{\theta_i}$,
    \begin{align*}
    & \frac{1}{\psi(q)} \, v \psi v^* = \frac{1}{\theta_i(e)} \, (\theta_i \circ \rE_{M_i}) e \quad , \\ & v M_{\ap,\psi} v^* = e M_{i,\ap,\theta_i} e \quad\text{and}\quad v M^\psi v^* = e M_i^{\theta_i} e \; ,
    \end{align*}
\item $\cP_3$ consists of the projections $q \in M^\psi$ for which there exists a partial isometry $v \in M$ with $v^* v = q$, $e = vv^* \in M^\vphi$,
$$\frac{1}{\psi(q)} \, v \psi v^* = \frac{1}{\vphi(e)} \, \vphi e \quad\text{and}\quad v M^\psi v = e M^\vphi e \; .$$
\end{itemize}
If $q \in M^\psi$ is a  projection such that $q M^\psi q$ has no amenable direct summand, then $q$ can be written as a sum of projections in $\cP$.
\end{prop}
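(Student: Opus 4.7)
The plan is an exhaustion argument. I would take a maximal family $(q_j)$ of pairwise orthogonal projections in $M^\psi$ below $q$ with each $q_j \in \cP$, set $q^\ast = q - \sum_j q_j$, and derive a contradiction from the assumption $q^\ast \neq 0$. Since having ``no amenable direct summand'' passes to corners within $M^\psi$ (any central projection of $q^\ast M^\psi q^\ast$ lifts to a cut in the center of $M^\psi$), the compression $q^\ast M^\psi q^\ast$ also has no amenable direct summand. The aim is then to produce a nonzero subprojection of $q^\ast$ lying in $\cP$, contradicting maximality.

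To produce such a subprojection, I would first invoke the free product deformation/rigidity theorem \cite[Theorem 4.3]{HU15} in the core $\core_\vphi(M)$. Since $\Pi_{\vphi,\psi}(\rL_\psi(\R))$ commutes with $M^\psi \subset \core_\vphi(M)$, the nonamenability of $q^\ast M^\psi q^\ast$ (after compressing by a nonzero finite trace projection $r \in \rL_\psi(\R)$) forces one of the three intertwinings
\[
\Pi_{\vphi,\psi}(\rL_\psi(\R) q^\ast r) \prec_{\core_\vphi(M)} \core_\vphi(M_1), \quad \core_\vphi(M_2), \quad \text{or} \quad \rL_\vphi(\R).
\]
In the third case, Corollary \ref{cor.key} directly provides a partial isometry $v$ with $v^\ast v \leq q^\ast$ in $M^\psi$, $vv^\ast \in M^\vphi$, implementing a state-preserving conjugacy between the appropriate corners of $\psi$ and $\vphi$; since such conjugacies automatically carry centralizers and almost periodic parts, we get $v^\ast v \in \cP_3$, and we are done. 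In the first two cases, Theorem \ref{thm-key} applied with $P = M_i$ yields a faithful normal state $\theta_i$ on $M_i$ and a partial isometry $v$ with $v^\ast v \leq q^\ast$, $e := vv^\ast \in M_i^{\theta_i}$, and $\psi(v^\ast v)^{-1}\, v\psi v^\ast = \theta_i(e)^{-1}(\theta_i \circ \rE_{M_i})\,e$.

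The hard part is finishing the first two cases: the state-preserving conjugacy is to $\theta_i \circ \rE_{M_i}$ on all of $M$, whose centralizer $eM^{\theta_i \circ \rE_{M_i}}e$ may strictly contain $eM_i^{\theta_i}e$, so $v^\ast v$ need not yet lie in $\cP_i$. To repair this, I would apply Lemma \ref{lem.centr-free-product} to the state $\theta_i \circ \rE_{M_i}$ (swapping the roles of $M_1, M_2$ if $i = 2$) and to the projection $e$, obtaining a decomposition $e = e_0 + \sum_{k \geq 1} e_k$ where $e_0 \in M_i^{\theta_i}$ realizes the exact centralizer and almost periodic matching required by $\cP_i$, while each $e_k$ ($k \geq 1$) comes with a partial isometry $w_k$ satisfying $w_k w_k^\ast = e_k$, $w_k^\ast w_k \in M^\vphi$, and a state-preserving conjugacy between the corresponding corners of $\theta_i \circ \rE_{M_i}$ and $\vphi$. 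Pulling back along $v$, the subprojection $v^\ast e_0 v$ then lies in $\cP_i$ via the partial isometry $e_0 v$, and each $v^\ast e_k v$ lies in $\cP_3$ via the composite $w_k^\ast v$. Since $e \neq 0$, at least one of these pieces is nonzero, contradicting the maximality of $(q_j)$ and completing the proof.
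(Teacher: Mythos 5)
Your proposal is correct and follows essentially the same route as the paper's proof: the dichotomy of \cite[Theorem 4.3]{HU15} applied in $\core_\vphi(M)$, then Theorem \ref{thm-key} with $P=M_i$ (resp.\ Corollary \ref{cor.key}), then Lemma \ref{lem.centr-free-product} to split off the part where the centralizer of $\theta_i\circ\rE_{M_i}$ genuinely lives in $M_i$, and finally a maximality argument. The only imprecision is that Theorem \ref{thm-key} yields $vv^*\in M^{\theta_i\circ\rE_{M_i}}$ rather than $vv^*\in M_i^{\theta_i}$, but you identify and repair exactly this point in your final paragraph, so it is harmless.
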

\begin{proof}
Let $\psi$ be a faithful normal state on $M$ and $q \in M^\psi$ a projection such that $q M^\psi q$ has no amenable direct summand. It suffices to prove that $q$ dominates a nonzero projection in $\cP$, since then a maximality argument can be applied.

Fix any nonzero finite trace projection $r_0 \in \rL_\psi(\R)$ and put $r = \Pi_{\vphi,\psi}(q r_0)$. Define the von Neumann subalgebra $Q \subset r \core_\vphi(M) r$ given by
$$Q = \Pi_{\vphi,\psi}\bigl(\rL_\psi(\R) q r_0 \vee q M^\psi q r_0 \bigr) \; .$$
Note that $Q$ has no amenable direct summand. By \cite[Theorem 4.3]{HU15},
$$\text{either}\quad Q' \cap r \core_\vphi(M) r \prec_{\core_\vphi(M)} \rL_\vphi(\R) \quad\text{or}\quad Q \prec_{\core_\vphi(M)} \core_\vphi(M_i) \;\;\text{for $i=1$ or $i=2$.}$$
Since $\Pi_{\vphi,\psi}(\rL_\psi(\R) q r_0)$ belongs to both $Q$ and $Q' \cap r \core_\vphi(M) r$, it follows that
$$\Pi_{\vphi,\psi}(\rL_\psi(\R) q r_0) \prec_{\core_\vphi(M)} \core_{\vphi}(M_i)$$
for $i=1$ or $i=2$.

By Theorem \ref{thm-key}, we find a faithful normal state $\theta_i$ on $M_i$ and a partial isometry $v \in M$ such that $q_0 = v^* v$ is a nonzero projection in $q M^\psi q$, $p = vv^*$ belongs to $M^{\theta_i \circ \rE_{M_i}}$ and
$$\frac{1}{\psi(q_0)} \, v \psi v^* = \frac{1}{\theta_i(\rE_{M_i}(p))} \, (\theta_i \circ \rE_{M_i}) p \; .$$
Write $\theta = \theta_i \circ \rE_{M_i}$. By Lemma \ref{lem.centr-free-product}, we either find a nonzero projection $e \leq p$ such that $e \in M_i^{\theta_i}$ and $e M_{\ap,\theta} e = e M_{i,\ap,\theta_i} e$ and $eM^\theta e = e M_i^{\theta_i} e$, or we find a nonzero projection $p_0 \in p M^\theta p$ and a partial isometry $w \in M$ such that $ww^* = p_0$, $e = w^* w$ belongs to $M^\vphi$ and
$$\frac{1}{\theta(p_0)} \, w^* \theta w = \frac{1}{\vphi(e)} \, \vphi e \; .$$
In the first case, we get that the projection $v^* e v \leq q$ belongs to $\cP_i$, while in the second case, the projection $v^* p_0 v \leq q$ belongs to $\cP_3$.
\end{proof}

\begin{proof}[Proof of Theorem \ref{thmE}]
Denote by $\rE_{M_i} : M \recht M_i$ the unique $\vphi$-preserving conditional expectation. If $M^\vphi$ is nonamenable, then obviously, $M$ does not have all its centralizers amenable. If $M_i$ admits a faithful normal state $\theta_i$ such that $M_i^{\theta_i}$ is nonamenable, then $\theta_i \circ \rE_{M_i}$ is a faithful normal state on $M$ with $M_i^{\theta_i} \subset M^{\theta_i \circ \rE_{M_i}}$, so that again, $M$ does not have all its centralizers amenable.

Conversely, assume that $\psi$ is a faithful normal state on $M$ such that $M^\psi$ is nonamenable. Take a nonzero projection $q \in M^\psi$ such that $q M^\psi q$ has no amenable direct summand. By \ref{prop.nonamen-centr-free-product}, we either find $i \in \{1,2\}$ and a faithful normal state $\theta_i$ on $M_i$ such that $M_i^{\theta_i}$ is nonamenable, or we find that $M^\vphi$ is nonamenable.
\end{proof}

\section{Further structural results and proof of Theorem \ref{thmF}}\label{sec.further}

We start by showing that the invariant of Theorem \ref{thmA} is not a complete invariant for the family of free Araki--Woods factors $\Gamma(\mu, m)\dpr$ arising from finite symmetric Borel measures $\mu$ on $\R$ whose atomic part $\mu_a$ is nonzero and not supported on $\{0\}$.

\begin{thm}\label{thm-invariant}
Let $\Lambda < \R$ be any countable subgroup such that $\Lambda \neq \{0\}$ and denote by $\delta_\Lambda$ a finite atomic measure on $\R$ whose set of atoms equals $\Lambda$. Let $\eta$ be any continuous finite symmetric Borel measure on $\R$ such that $\mathcal C(\eta) = \mathcal C(\eta \ast\delta_\Lambda)$ and such that the measures $(\eta^{\ast k})_{k \geq 1}$ are pairwise singular.

Put $\mu = \delta_\Lambda + \eta$ and $\nu = \delta_\Lambda + \eta + \eta \ast \eta$. Then,
$$\Gamma(\mu,1)\dpr \not\cong \Gamma(\nu,1)\dpr \quad \text{and}  \quad \mathcal C(\bigvee_{k \geq 1} \mu^{\ast k}) = \mathcal C(\bigvee_{k \geq 1} \nu^{\ast k}).$$
\end{thm}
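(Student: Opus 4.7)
The plan is to prove the two assertions of the theorem separately.

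First, I verify the joint measure class equality by a direct expansion. The multinomial theorem gives
$$\mu^{\ast k} = \sum_{j=0}^k \binom{k}{j} \delta_\Lambda^{\ast(k-j)} \ast \eta^{\ast j}.$$
Since $\Lambda$ is a subgroup, $\delta_\Lambda^{\ast n} \sim \delta_\Lambda$ for every $n \geq 1$. The hypothesis $\cC(\eta \ast \delta_\Lambda) = \cC(\eta)$ together with stability of measure class under convolution yields $\eta^{\ast j} \ast \delta_\Lambda \sim \eta^{\ast j}$ for all $j \geq 1$ by induction. Hence $\cC(\mu^{\ast k}) = \cC(\delta_\Lambda) \cap \bigcap_{1 \leq j \leq k} \cC(\eta^{\ast j})$, and passing to the join,
$$\cC\Bigl(\bigvee_{k \geq 1} \mu^{\ast k}\Bigr) = \cC(\delta_\Lambda) \cap \bigcap_{j \geq 1} \cC(\eta^{\ast j}).$$
A multinomial expansion of $\nu^{\ast k}$ produces summands $\delta_\Lambda^{\ast a} \ast \eta^{\ast (b+2c)}$ with $a+b+c=k$, and as $b,c \geq 0$ vary subject to $b + c \leq k$ the exponent $b + 2c$ sweeps through $\{0, 1, \ldots, 2k\}$; the same simplifications yield the identical limiting join.

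For the non-isomorphism, suppose $\Gamma(\mu, 1)\dpr \cong \Gamma(\nu, 1)\dpr$. Since $\Lambda \neq \{0\}$ the measure $\nu$ has a nonzero atom, so Theorem~\ref{thmA} provides a state-preserving isomorphism $(\Gamma(\mu, 1)\dpr, \vphi_{\mu, 1}) \cong (\Gamma(\nu, 1)\dpr, \vphi_{\nu, 1})$. Applying Theorem~\ref{thm.isom} on both sides (with $\Lambda(\mu_a) = \Lambda(\nu_a) = \Lambda$), and using the equivalences $\mu \ast \delta_\Lambda \sim \mu$ and $\nu \ast \delta_\Lambda \sim \nu$ established above, we reduce to a state-preserving isomorphism
$$(\Gamma(\mu, +\infty)\dpr, \vphi_{\mu, +\infty}) \cong (\Gamma(\nu, +\infty)\dpr, \vphi_{\nu, +\infty}).$$
On the other hand, the pairwise singularity hypothesis forces $\cC(\mu) \neq \cC(\nu)$: since $\eta$ is continuous and singular to $\eta \ast \eta$, one can find a Borel set $E \subset \R \setminus \Lambda$ with $\eta(E) = 0$ but $\eta \ast \eta(E) > 0$, which is $\mu$-null and $\nu$-nonnull.

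The main obstacle is to rule out the displayed state-preserving isomorphism, since the joint measure class alone -- already shown equal -- is insufficient. The plan is to establish that for a measure $\rho$ satisfying the saturation condition $\rho \sim \rho \ast \delta_{\Lambda(\rho_a)}$ (so that Theorem~\ref{thm.isom} is already a fixed point at $\rho$), the measure class $\cC(\rho)$ itself is an invariant of $(\Gamma(\rho, +\infty)\dpr, \vphi_{\rho, +\infty})$ up to state-preserving isomorphism. A natural approach is to isolate an intrinsic subspace of $\rL^2(M)$ whose spectral measure under $\Delta_{\vphi}$ recovers $\cC(\rho)$; a candidate is the closed $M^\vphi$-subbimodule of $\rL^2(M) \ominus \rL^2(M^\vphi)$ generated by the vacuum images of the canonical semicircular generators of $M$, characterized intrinsically through the free Gaussian structure of the pair $(M, \vphi)$ (for instance as the ``degree one'' eigenspace of a natural grading recoverable from free compression). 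Once such a refined invariant is constructed, the inequality $\cC(\mu) \neq \cC(\nu)$ closes the argument.
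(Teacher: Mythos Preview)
Your computation of the joint measure class is fine and more detailed than the paper's ``by construction''.

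The non-isomorphism part, however, has a genuine gap. After reducing via Theorem~\ref{thmA} to a state-preserving isomorphism, you pass to infinite multiplicity using Theorem~\ref{thm.isom} and then need to show that $\cC(\rho)$ is a state-preserving invariant of $(\Gamma(\rho,+\infty)\dpr,\vphi_{\rho,+\infty})$. You do not prove this; you only \emph{propose} a candidate ``degree-one'' subspace and suggest it might be recovered intrinsically ``through the free Gaussian structure \ldots\ for instance as the `degree one' eigenspace of a natural grading recoverable from free compression''. This is a research programme, not a proof. No result in the paper (or in the literature you could be implicitly citing) furnishes an intrinsic characterization of the one-particle subspace of a free Araki--Woods factor that is preserved by an arbitrary state-preserving automorphism. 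A state-preserving isomorphism only gives unitary equivalence of the modular operators on $\rL^2$, and that recovers $\cC(\bigvee_k \rho^{\ast k})$, not $\cC(\rho)$.

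The paper's argument avoids this problem entirely by staying at multiplicity~$1$ and exploiting a \emph{non-intrinsic} subalgebra. Write $(M,\vphi)=(\Gamma(\mu,1)\dpr,\vphi_{\mu,1})$, $(N,\psi)=(\Gamma(\nu,1)\dpr,\vphi_{\nu,1})$, and let $Q=\Gamma(\delta_\Lambda+\eta,1)\dpr\subset N$ be the canonical $\sigma^\psi$-invariant subalgebra coming from the summand $\delta_\Lambda+\eta$ of $\nu$. The key observation is the spectral computation
\[
\cC\bigl(\Delta_\psi|_{\rL^2(N)\ominus\rL^2(Q)}\bigr)=\bigcap_{k\geq 2}\cC(\eta^{\ast k}),
\]
which follows from the Fock-space description of $\rL^2(N)$: any vector orthogonal to $\rL^2(Q)$ must involve at least one tensor leg from the $\eta\ast\eta$ summand. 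Now if $\pi:(M,\vphi)\to(N,\psi)$ is state-preserving and $s(\xi)$, $\xi\in H_\R^\mu$, are the canonical semicircular generators of $M$, then $\Delta_\psi^{\ri t}\pi(s(\xi))\xi_\psi=\pi(s(U_t^\mu\xi))\xi_\psi$, so the spectral measure of $\pi(s(\xi))\xi_\psi$ under $\log\Delta_\psi$ lives in $\cC(\mu)=\cC(\delta_\Lambda)\cap\cC(\eta)$. Since $\mu$ is singular with respect to every $\eta^{\ast k}$ for $k\geq 2$ (atomicity of $\delta_\Lambda$, continuity of $\eta^{\ast k}$, pairwise singularity of the $\eta^{\ast k}$), it follows that $\pi(s(\xi))\in\rL^2(Q)$, hence $\pi(s(\xi))\in Q$ for every $\xi$. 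Therefore $\pi(M)\subset Q\subsetneq N$, contradicting surjectivity of $\pi$. No intrinsic description of a degree-one space is needed.
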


\begin{proof}
By construction, we have $\mathcal C(\bigvee_{k \geq 1} \mu^{\ast k}) = \mathcal C(\bigvee_{k \geq 1} \nu^{\ast k})$. We denote $M := \Gamma(\mu,1)\dpr$ and $N := \Gamma(\nu,1)\dpr$. We denote by $Q \subset N$ the canonical von Neumann subalgebra given by $Q := \Gamma(\delta_\Lambda + \eta,1)\dpr$. Put $\varphi := \varphi_{\mu,1}$ and $\psi := \varphi_{\nu,1}$. Observe that the inclusion $Q \subset N$ is globally invariant under the modular automorphism group $\sigma^{\psi}$.

Assume by contradiction that $M \cong N$. By Theorem \ref{thmA}, there exists a state preserving isomorphism $\pi : (M ,\varphi) \recht (N, \psi)$ of $M$ onto $N$. Then, $\pi$ extends to a unitary operator $U : \rL^2(M,\vphi) \recht \rL^2(N,\psi)$ satisfying $U \Delta_\vphi U^* = \Delta_\psi$. Define the real Hilbert space
$$H_\R^\mu := \left\{ f \in \rL^2_\C(\R, \mu) : f(-s) = \overline {f(s)} \text{ for } \mu\text{-almost every } s \in \R\right\}$$
and the orthogonal representation
$$U^\mu : \R \curvearrowright H_\R^\mu : U_s^\mu(f)(t) = \exp({\rm i}st) f(t) \; .$$
Denote by $s(\xi) := \ell(\xi) + \ell(\xi)^*$, $\xi \in H_\R^\mu$, the canonical semicircular elements that generate $M$ and satisfy $\sigma_t^\vphi(s(\xi)) = s(U_t^\mu \xi)$.
By construction, $\mathcal C(\Delta_{\psi} |_{\rL^2(N) \ominus \rL^2(Q)}) = \bigcap_{k \geq 2} \mathcal C(\eta^{\ast k})$. Since $\mu$ is singular w.r.t.\ $\eta^{\ast k}$ for all $k \geq 2$, it follows that $\pi(s(\xi)) \in Q$ for all $\xi \in H_\R^\mu$. But then, $\pi(M) \subset Q$, which is impossible because $\pi$ is surjective.
\end{proof}

Note that Example \ref{ex.many-measure} provides many measures $\eta$ satisfying the assumptions of Theorem \ref{thm-invariant}.

We could not prove or disprove that the measure class $\mathcal C(\mu_c * \delta_\Lambda(\mu_a))$ is an invariant for the family of free Araki--Woods factors $\Gamma(\mu, m)\dpr$ arising from finite symmetric Borel measures $\mu$ on $\R$ whose atomic part $\mu_a$ is nonzero and not supported on $\{0\}$.

Using Theorem \ref{thm.meta} in combination with the results of \cite{BH16}, we can also clarify the relation between free Araki--Woods factors and free products of amenable von Neumann algebras. Combining \cite[Theorems 2.11 and 4.8]{Sh96}, it follows that every almost periodic free Araki--Woods factor is isomorphic with a free product of von Neumann algebras of type~I. Conversely, by \cite{Ho06}, many free products of type~I von Neumann algebras are isomorphic with free Araki--Woods factors.
In \cite[Theorem 4.6]{Dy92}, it is proved that a free product of two amenable von Neumann algebras w.r.t.\ faithful normal traces is always isomorphic to the direct sum of an interpolated free group factor and a finite dimensional algebra. It is therefore tempting to believe that every type~III factor arising as a free product of amenable von Neumann algebras w.r.t.\ faithful normal states is a free Araki--Woods factor. The following example shows however that this is almost never the case if one of the states fails to be almost periodic.

\begin{thm}
Let $(P,\theta) = \ast_n (P_n,\theta_n)$ be a free product of amenable von Neumann algebras. Assume that the centralizer $P^\theta$ has no amenable direct summand and that at least one of the $\theta_n$ is not almost periodic.

Then $P$ is not isomorphic to a free Araki--Woods factor. Even more: there is no faithful normal homomorphism $\pi$ of $P$ into a free Araki--Woods factor $M$ such that $\pi(P) \subset M$ is with expectation.

The same conclusions hold if $(P,\theta)$ is any von Neumann algebra with a faithful normal state satisfying the following three properties: the centralizer $P^\theta$ has no amenable direct summand, $\theta$ is not almost periodic and $P$ is generated by a family of amenable von Neumann subalgebras $P_n \subset P$ that are globally invariant under the modular automorphism group $(\sigma_t^\theta)$.
\end{thm}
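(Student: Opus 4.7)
The plan is to argue by contradiction: I assume there is a faithful normal $*$-homomorphism $\pi : P \to M$ whose range is the image of a faithful normal conditional expectation $\rE : M \to \pi(P)$, where $(M,\vphi) = (\Gamma(\mu,m)\dpr, \vphi_{\mu,m})$ is a free Araki--Woods factor with its free quasi-free state. First I transport $\theta$ to a faithful normal state on $M$ by setting $\psi := \theta \circ \pi^{-1} \circ \rE$. By Takesaki's theorem $\pi$ is then $\psi$-preserving and $\sigma^\psi$-equivariant, so $\pi(P^\theta) \subset M^\psi$ has no amenable direct summand, and each $\pi(P_n) \subset M$ is an amenable, globally $\sigma^\psi$-invariant von Neumann subalgebra with expectation.

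Next I apply Theorem~\ref{thm.meta} to $(M,\vphi)$ and $\psi$. This yields a central projection $z \in \cZ(M^\psi)$ with $M^\psi(1-z)$ amenable, together with partial isometries $v_n \in M$ satisfying $q_n := v_n v_n^* \in M^\psi z$, $p_n := v_n^* v_n \in M^\vphi$, $\sum_n q_n = z$, and $\psi q_n = \lambda_n v_n \vphi v_n^*$ for some $\lambda_n > 0$. Consequently, $\Ad(v_n^*) : q_n M q_n \to p_n M p_n$ is a $*$-isomorphism sending $\psi|_{q_n M q_n}$ onto $\lambda_n \vphi|_{p_n M p_n}$, and thus intertwining $\sigma^\psi$ with $\sigma^\vphi$ on the respective corners. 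Since $\pi(P^\theta)(1-z)$ is at once amenable and a direct summand of $\pi(P^\theta)$, it vanishes, so $\pi(P^\theta) \subset M^\psi z$.

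The heart of the argument is then to combine this intertwining with \cite{BH16}. I would pass to the continuous cores: each $A_n := \pi(P_n) \rtimes_{\sigma^\psi} \R \subset \core_\psi(M)$ is an amenable semifinite von Neumann subalgebra with expectation. Via the canonical isomorphism $\Pi_{\vphi,\psi} : \core_\psi(M) \to \core_\vphi(M)$, together with the partial-isometry intertwiners from Theorem~\ref{thm.meta}, each $A_n$ transports to an amenable subalgebra with expectation of a corner of $\core_\vphi(M)$ that is globally invariant under the trace-scaling flow. The structural results of \cite{BH16} on amenable subalgebras of $\core_\vphi(M)$ with expectation then force each $\pi(P_n) z$ to have almost periodic modular flow, that is, $\pi(P_n) z \subset M_{\ap,\psi} z$. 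Since $P$ is generated by the $(P_n)$, this propagates to $\pi(P) z \subset M_{\ap,\psi} z$. But $\sigma^\psi|_{\pi(P)} = \pi \circ \sigma^\theta \circ \pi^{-1}$ is not almost periodic, and since $\pi(P^\theta) = \pi(P^\theta) z$ has no amenable direct summand, the non-almost-periodic piece of $\sigma^\theta$ cannot be hidden in $1-z$. This gives the contradiction.

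\textbf{The main obstacle} is that the projections $q_n$ lie in $M^\psi$ but need not commute with $\pi(P_n)$, so the naive compressions $q_n \pi(P_n) q_n$ are merely linear subspaces and cannot be fed directly into \cite{BH16}. The cleanest fix, I expect, is to stay in the continuous core throughout: there $\pi(P_n) \rtimes_{\sigma^\psi} \R$ is an honest amenable subalgebra with expectation, the identification $\core_\psi(M) \cong \core_\vphi(M)$ is canonical, and the partial-isometry intertwiners from Theorem~\ref{thm.meta} lift to the core, allowing \cite{BH16} to be applied inside $\core_\vphi(M)$. Bookkeeping this and reading off almost periodicity of $\psi$ from the resulting intertwining is the technical crux.
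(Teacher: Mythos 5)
Your setup and endgame match the paper's: you form $\psi = \theta\circ\pi^{-1}\circ\rE$, note that $\pi(P^\theta)\subset M^\psi$ forces $M^\psi$ to have no amenable direct summand (so $z=1$ in Theorem \ref{thm.meta}), and you aim to finish by applying \cite[Theorem 4.1]{BH16} to the amenable, modular-invariant subalgebras $P_n$. You also correctly identify the crux: the partial isometries $v_n$ only conjugate the corner $\psi q_n$ onto $\lambda_n\,\vphi p_n$, and these local intertwiners do not by themselves make $\pi(P_n)$ globally $\sigma^\vphi$-invariant, which is what \cite[Theorem 4.1]{BH16} requires. But your proposed fix --- pass to the continuous cores and apply ``the structural results of \cite{BH16} on amenable subalgebras of $\core_\vphi(M)$ with expectation'' --- is a genuine gap, not a technical bookkeeping issue. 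The result of \cite{BH16} invoked by the paper is a statement about amenable subalgebras of the free Araki--Woods factor $M$ itself that are invariant under the modular group of the \emph{free quasi-free state}; it is not a statement about amenable subalgebras of $\core_\vphi(M)$ invariant under the dual (trace-scaling) action, and you do not explain how such a core-level statement would be obtained or how almost periodicity of $\sigma^\psi|_{\pi(P_n)}$ would be read off from it. As written, the decisive step of your argument rests on a theorem that does not exist in the form you need.

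The missing idea in the paper's proof is an explicit device for upgrading the family $(v_n)$ to a single unitary conjugacy between $\psi$ and $\vphi$: one first replaces $(M,\vphi)$ by its free product with a suitable almost periodic free Araki--Woods factor (still a free Araki--Woods factor), arranged so that $M^\vphi$ is a factor and each scalar $\lambda_n=\psi(q_n)/\vphi(p_n)$ is an eigenvalue of $\Delta_\vphi$. One can then pick partial isometries $w_n$ with $\sigma_t^\vphi(w_n)=\lambda_n^{-\ri t}w_n$, $w_nw_n^*=p_n$ and $\sum_n w_n^*w_n=1$, so that $v=\sum_n v_nw_n$ is a unitary satisfying $[\rD\psi:\rD\vphi]_t=v\,\sigma_t^\vphi(v^*)$, i.e.\ $\vphi=\psi\circ\Ad v$. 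The homomorphism $\eta=\Ad v^*\circ\pi$ then satisfies $\sigma_t^\vphi\circ\eta=\eta\circ\sigma_t^\theta$, each $\eta(P_n)$ is an amenable $\sigma^\vphi$-invariant subalgebra of $M$, and \cite[Theorem 4.1]{BH16} applies verbatim to conclude $\eta(P)\subset M_{\ap,\vphi}$, whence $\theta$ is almost periodic --- the desired contradiction. Without the free-product enlargement absorbing the scalars $\lambda_n$, the intertwiners cannot be assembled, and your outline does not close.
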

\begin{proof}
Let $(M,\vphi)$ be a free Araki--Woods factor with its free quasi-free state. Let $(P,\theta)$ be a von Neumann algebra with a faithful normal state $\theta$ such that the centralizer $P^\theta$ has no amenable direct summand and such that $P$ is generated by a family of amenable von Neumann subalgebras $P_n \subset P$ that are globally invariant under the modular automorphism group $(\sigma_t^\theta)$. Let $\pi : P \recht M$ be a normal homomorphism and $\rE : M \recht \pi(P)$ a faithful normal conditional expectation. We prove that $\theta$ is almost periodic.

Define the faithful normal state $\psi \in M_*$ given by $\psi = \theta \circ \pi^{-1} \circ \rE$. Since $\pi(P^\theta) \subset M^\psi$, we get that $M^\psi$ has no amenable direct summand. By Theorem \ref{thm.meta}, we find partial isometries $v_n \in M$ such that $q_n = v_n v_n^* \in M^\psi$, $p_n = v_n^* v_n \in M^\vphi$, $\sum_n q_n = 1$ and
$$q_n \, [\rD\psi : \rD \vphi]_t = \lambda_n^{\ri t} \, v_n \, \sigma_t^\vphi(v_n^*)$$
where $\lambda_n = \psi(q_n)/\vphi(p_n)$.

Replacing $(M,\vphi)$ by the free product of $(M,\vphi)$ and the appropriate almost periodic free Araki--Woods factor, we still get a free Araki--Woods factor and we may assume that $M^\vphi$ is a factor and that each $\lambda_n$ is an eigenvalue for the free quasi-free state. We can then choose partial isometries $w_n \in M$ such that $\sigma_t^\vphi(w_n) = \lambda_n^{-\ri t} w_n$, $w_n w_n^* = p_n$ and such that $e_n = w_n^* w_n$ belongs to $M^\vphi$ with $\sum_n e_n = 1$. So, we find that
$$q_n \, [\rD\psi : \rD \vphi]_t = v_n w_n \, \sigma_t^\vphi(w_n^* v_n^*)$$
for all $n$ and all $t \in \R$. We conclude that $v = \sum_n v_n w_n$ is a unitary in $M$ satisfying $[\rD\psi : \rD\vphi]_t = v \, \sigma_t^\vphi(v^*)$. This means that $\vphi = \psi \circ \Ad v$ and that the homomorphism $\eta = \Ad v^* \circ \pi$ satisfies $\sigma_t^\vphi \circ \eta = \eta \circ \sigma_t^\theta$.

So for every $n$, the subalgebra $\eta(P_n) \subset M$ is amenable and globally invariant under the modular automorphism group $(\sigma_t^\vphi)$. By \cite[Theorem 4.1]{BH16}, it follows that $\eta(P)$ lies in the almost periodic part of $(M,\vphi)$. This implies that the restriction of $(\sigma_t^\theta)$ to $P_n$ is almost periodic. Since this holds for every $n$, we conclude that $\theta$ is almost periodic.
\end{proof}

From Proposition \ref{prop.nonamen-centr-free-product}, we get the following rigidity results for free product von Neumann algebras. Roughly, the result says that an arbitrary free product of a ``very much non almost periodic'' $M_1$ with an almost periodic $M_2$ remembers the almost periodic part $M_2$ up to amplification.

Recall that a faithful normal positive functional $\vphi$ on a von Neumann algebra $M$ is said to be \emph{weakly mixing} if the unitary representation $\sigma_t^\vphi(\,\cdot\,)$ on $\rL^2(M,\vphi) \ominus \C 1$ is weakly mixing, and that $\vphi$ is said to be \emph{almost periodic} if the unitary representation $\sigma_t^\vphi(\,\cdot\,)$ on $\rL^2(M,\vphi)$ is almost periodic.

\begin{prop}\label{prop.rigidity-free-products}
For $i = 1,2$, let $(M_i,\vphi_i)$ and $(N_i,\psi_i)$ be von Neumann algebras with faithful normal states. Denote by $(M,\vphi) = (M_1,\vphi_1) \ast (M_2,\vphi_2)$ and $(N,\psi) = (N_1,\psi_1) \ast (N_2,\psi_2)$ their free products. Assume that
\begin{itemize}
\item $M_1$ and $N_1$ have all their centralizers amenable and $\vphi_1$, $\psi_1$ are weakly mixing states,
\item $M_2^{\vphi_2}$ and $N_2^{\psi_2}$ have no amenable direct summand and $\vphi_2$, $\psi_2$ are almost periodic.
\end{itemize}
If $M \cong N$, there exist nonzero projections $e \in M_2$ and $q \in N_2$ such that $eM_2 e \cong q N_2 q$.
\end{prop}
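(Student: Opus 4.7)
The plan is to apply Proposition~\ref{prop.nonamen-centr-free-product} on the $M$-side with the transported state $\vartheta := \psi\circ\pi$, where $\pi : M \to N$ is the given isomorphism. I would first identify the relevant centralizers and almost periodic parts: by the Fock-space decomposition of $\rL^2(M)$ with respect to the free product state $\vphi$ and the fact that the tensor product of a weakly mixing representation with any other unitary representation is weakly mixing, the only vectors in $\rL^2(M)\ominus \C$ carrying $\sigma^\vphi$-fixed or almost periodic behaviour lie in the single factor $\rL^2(M_2)\ominus\C$; combined with the almost periodicity of $\vphi_2$ this yields $M^\vphi = M_2^{\vphi_2}$ and $M_{\ap,\vphi} = M_2$, with the analogous identifications $N^\psi = N_2^{\psi_2}$ and $N_{\ap,\psi} = N_2$.

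Since $\pi : (M,\vartheta)\to (N,\psi)$ is state-preserving it intertwines modular automorphism groups, so $M^\vartheta = \pi^{-1}(N_2^{\psi_2})$ has no amenable direct summand and $M_{\ap,\vartheta} = \pi^{-1}(N_2)$. I would then pick a nonzero projection $q \in M^\vartheta$ with $qM^\vartheta q$ having no amenable direct summand and apply Proposition~\ref{prop.nonamen-centr-free-product} to obtain a decomposition $q = \sum_j q_j$ with each $q_j \in \cP_1\cup\cP_2\cup\cP_3$.

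The $\cP_1$ pieces must vanish: if $q_j\in\cP_1$ then $q_j M^\vartheta q_j$ is amenable as a corner of $M_1^{\theta_1}$ (amenable by hypothesis), but is also equal to $\pi^{-1}(\pi(q_j) N_2^{\psi_2}\pi(q_j))$, a corner of $N_2^{\psi_2}$, so a standard Morita argument on central supports would produce an amenable direct summand of $N_2^{\psi_2}$ unless $\pi(q_j)=0$. A $\cP_3$ piece gives the conclusion straight away: writing $v^*v = q_j$ and $e=vv^*\in M^\vphi = M_2^{\vphi_2}\subset M_2$, the proposition yields $vM_{\ap,\vartheta}v^* = eM_{\ap,\vphi}e$, which via the identifications above reads $v\pi^{-1}(N_2)v^* = eM_2 e$; transporting through $\pi$ produces the desired corner isomorphism $\pi(q_j)\,N_2\,\pi(q_j) \cong eM_2 e$ with $\pi(q_j) \in N_2^{\psi_2}\subset N_2$ and $e \in M_2$.

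The main obstacle is the $\cP_2$ case, which only provides $q_j\pi^{-1}(N_2)q_j \cong eM_{2,\ap,\theta_2}e$ onto a corner of the possibly proper almost periodic subalgebra $M_{2,\ap,\theta_2}\subset M_2$ for some auxiliary state $\theta_2$ on $M_2$. To handle this I would run the proposition symmetrically on $(N,\vphi\circ\pi^{-1})$: if either decomposition contains a $\cP_3$ piece we are finished by the previous paragraph; otherwise both decompositions consist entirely of $\cP_2$ pieces, producing mutually compatible state-preserving isomorphisms between corners of $(N_2,\psi_2)$ and almost periodic subalgebras of $M_2$, and vice versa. I would then combine these intertwinings using a Connes--Radon--Nikodym cocycle argument, together with the fact that $\vphi_2$ and $\psi_2$ are already almost periodic on $M_2$ and $N_2$, to upgrade the almost periodic subalgebra intertwining on one side into a genuine isomorphism of full corners $eM_2 e \cong qN_2 q$.
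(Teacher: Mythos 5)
Your overall strategy coincides with the paper's: transport $\psi$ to $\vartheta=\psi\circ\pi$, apply Proposition \ref{prop.nonamen-centr-free-product}, kill the $\cP_1$ pieces using that $M_1$ has all centralizers amenable while $N_2^{\psi_2}$ has no amenable direct summand, and use weak mixing of $\vphi_1,\psi_1$ plus almost periodicity of $\vphi_2,\psi_2$ to identify $M^\vphi=M_2^{\vphi_2}$, $M_{\ap,\vphi}=M_2$, $M^\vartheta=\pi^{-1}(N_2^{\psi_2})$ and $M_{\ap,\vartheta}=\pi^{-1}(N_2)$. Up to the point where you obtain $v\,\pi^{-1}(N_2)\,v^* = e\,M_{2,\ap,\theta}\,e$ for some auxiliary state $\theta$ on $eM_2e$, this is exactly the paper's argument, and you correctly single out the real difficulty: nothing forces $M_{2,\ap,\theta}$ to equal $M_2$.

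The resolution of that difficulty is where your proposal has a genuine gap. "Combine these intertwinings using a Connes--Radon--Nikodym cocycle argument to upgrade the almost periodic subalgebra intertwining" is not an argument: the cocycle $[\rD\theta:\rD\vphi_2]_t$ gives you no control over whether $\theta$ is almost periodic on $eM_2e$, which is precisely what $e M_{2,\ap,\theta}e=eM_2e$ amounts to. What the paper actually does is the following sandwich. After conjugating $e$ into $M_2^{\vphi_2}$, it applies the decomposition a second time on the $N$-side to the state $\vphi\circ\pi^{-1}$ and to the \emph{specific} projection $\pi(e)\in N^{\vphi\circ\pi^{-1}}=\pi(M_2^{\vphi_2})$. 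This produces countably many pieces $e_i\le e$ and partial isometries $w_i\in N$, which must be assembled into a single partial isometry $W=\sum_i e_i\ot w_i$ after amplifying by $\rB(\ell^2(\N))$ — a step absent from your sketch, and necessary because the decomposition is only into a countable sum. Setting $V=W\pi(v)$, one gets $VqN_2qV^*=W\pi(eM_{2,\ap,\theta}e)W^*\subset W\pi(eM_2e)W^*\subset p(\rB(\ell^2(\N))\ovt N_2)p$; since $N_2$ is diffuse (it has a centralizer with no amenable direct summand), the outer containment forces $V\in\ell^2(\N)\ot N_2$, hence all inclusions are equalities, hence $eM_{2,\ap,\theta}e=eM_2e$, and the first intertwining relation then yields $qN_2q\cong eM_2e$. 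Without this diffuseness/sandwich argument (or a substitute for it), the case where every piece on both sides is of type $\cP_2$ remains open, so your proof is incomplete at its decisive step. (A minor additional point: the $\cP_3$ case you treat separately is in the paper absorbed into the $\cP_2$ case, since $e\in M^\vphi= M_2^{\vphi_2}\subset M_2$ and $\vphi e=(\vphi_2\circ\rE_{M_2})e$; and the identity $vM_{\ap,\vartheta}v^*=eM_{\ap,\vphi}e$ you invoke is not literally part of the statement of $\cP_3$ but must be deduced from the state-intertwining relation.)
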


\begin{proof}
Whenever $\mu$ is a faithful normal state on $M$ and $q \in M^\mu$ is a projection such that $q M^\mu q$ has no amenable direct summand, we can apply Proposition \ref{prop.nonamen-centr-free-product}. Since $M_1$ has all its centralizers amenable, the set $\cP_1$ in Proposition \ref{prop.nonamen-centr-free-product} equals $\{0\}$. Since $\vphi_1$ is weakly mixing, the almost periodic part of a state of the form $\theta_2 \circ \rE_{M_2}$ (and, in particular, of $\vphi$) is contained in $M_2$. It thus follows from Proposition \ref{prop.nonamen-centr-free-product} that there exist sequences of projections $q_i \in M^\mu$ and $e_i \in M_2$, as well as partial isometries $v_i \in M$ and faithful normal positive functionals $\theta_i$ on $e_i M_2 e_i$ such that $v_i^* v_i = q_i$, $v_i v_i^* = e_i$, $\sum_{i=0}^\infty q_i = q$ and $v_i \mu v_i^*$ equals $\theta_i \circ \rE_{M_2}$ on $e_i M e_i$ for all $i \geq 0$.

Let $\pi : M \recht N$ be an isomorphism of $M$ onto $N$. We first apply the result in the first paragraph to $\mu = \psi \circ \pi$. Since $\psi_1$ is weakly mixing, $M^\mu = \pi^{-1}(N_2^{\psi_2})$. We find nonzero projections $q \in N_2^{\psi_2}$ and $e \in M_2$, a partial isometry $v \in M$ and a faithful normal positive functional $\theta$ on $e M_2 e$ such that $v^* v = \pi^{-1}(q)$, $vv^* = e$ and $v \mu v^* = \theta \circ \rE_{M_2}$ on $e M e$. Since $\vphi_1$ is weakly mixing, the almost periodic part of $\theta \circ \rE_{M_2}$ equals $e M_{2,\ap,\theta} e$. Since $\psi_1$ is weakly mixing and $\psi_2$ is almost periodic, the almost periodic part of $\mu$ equals $\pi^{-1}(N_2)$. It follows that
\begin{equation}\label{eq.step1}
v \pi^{-1}(N_2) v^* = e M_{2,\ap,\theta} e \; .
\end{equation}

By \cite[Lemma 2.1]{HU15}, every projection in $M_2$ is equivalent, inside $M_2$, with a projection in $M_2^{\vphi_2}$. So conjugating $e$ and $\theta$, we may assume that $e \in M_2^{\vphi_2}$. We then apply the result of the first paragraph of the proof to the free product $N = N_1 \ast N_2$, the faithful normal state $\mu' = \vphi \circ \pi^{-1}$ and the projection $f = \pi(e)$ in $N^{\mu'}$. Since $\vphi_1$ is weakly mixing, we have $N^{\mu'} = \pi(M_2^{\vphi_2})$. We thus find projections $e_i \in e M_2^{\vphi_2} e$ summing up to $e$, projections $p_i \in N_2$, partial isometries $w_i \in N$ and faithful normal positive functionals $\Om_i$ on $p_i N_2 p_i$ such that $w_i^* w_i = \pi(e_i)$, $w_i w_i^* = p_i$ and $w_i \mu' w_i^* = \Om_i \circ \rE_{N_2}$ on $p_i N p_i$ for all $i$. In particular, $\Om(p_i) = \vphi_2(e_i)$ and $\sum_i \Om(p_i) = \vphi_2(e)$.

Define the projection $p \in \rB(\ell^2(\N)) \ovt N_2$ given by $p = \sum_i e_{ii} \ot p_i$. Define the faithful normal positive functional $\Om$ on $p(\rB(\ell^2(\N)) \ovt N_2)p$ given by
$$\Om(T) = \sum_i \Om_i(T_{ii}) \; .$$
Finally define $W \in \ell^2(\N) \ot N$ given by $W = \sum_i e_i \ot w_i$. It follows that $W^* W = \pi(e)$, $W W^* = p$ and
$$W \mu' W^* = \Om \circ \rE_{\rB(\ell^2(\N)) \ovt N_2} \quad\text{on}\;\; p(\rB(\ell^2(\N)) \ovt N)p \; .$$
As above, $N_{\ap,\mu'} = \pi(M_{\ap,\vphi}) = \pi(M_2)$. Since $\psi_1$ is weakly mixing, the almost periodic part of the functional $\Om \circ \rE_{\rB(\ell^2(\N)) \ovt N_2}$ on $p(\rB(\ell^2(\N)) \ovt N)p$ is contained in $p(\rB(\ell^2(\N)) \ovt N_2)p$. It follows that
\begin{equation}\label{eq.step2}
W \pi(e M_2 e) W^* \subset p(\rB(\ell^2(\N)) \ovt N_2)p \; .
\end{equation}
Write $V = W \pi(v)$. Then $V \in \ell^2(\N) \ot N$ is a partial isometry with $VV^* = p \in \rB(\ell^2(\N)) \ovt N_2$ and $V^* V = q \in N_2$. Using \eqref{eq.step1} and \eqref{eq.step2}, we find that
\begin{equation}\label{eq.step3}
V q N_2 q V^* = W \pi(e M_{2,\ap,\theta} e) W^* \subset W \pi(e M_2 e)W^* \subset p (\rB(\ell^2(\N)) \ovt N_2) p \; .
\end{equation}
Since $N_2^{\psi_2}$ has no amenable direct summand, it follows in particular that $N_2$ is diffuse. Then \eqref{eq.step3} implies that $V \in \ell^2(\N) \ot N_2$ and therefore, all the inclusions in \eqref{eq.step3} are equalities. In particular, $e M_{2,\ap,\theta} e = e M_2 e$ so that \eqref{eq.step1} implies that $q N_2 q \cong e M_2 e$. This concludes the proof of the proposition.
\end{proof}

Combining Propositions \ref{prop.corner-free-product-aw} and \ref{prop.rigidity-free-products}, we can easily prove Theorem \ref{thmF}.

\begin{proof}[Proof of Theorem \ref{thmF}]
Put $(M,\vphi) = (\Gamma(\mu,+\infty)\dpr,\vphi_{\mu,+\infty})$ as in the formulation of the Proposition. Then, $\vphi$ is weakly mixing and by Corollary \ref{corC}, the free Araki--Woods factor $M$ has all its centralizers amenable. For $i=1,2$, let $(A_i,\psi_i)$ be von Neumann algebras with almost periodic faithful normal states having a nonamenable factorial centralizer $A_i^{\psi_i}$.

If the free products $(M_i,\vphi_i) = (M,\vphi) \ast (A_i,\psi_i)$ satisfy $M_1 \cong M_2$, it follows from Proposition \ref{prop.rigidity-free-products} that there exist nonzero projections $p_i \in A_i$ such that $p_1 A_1 p_1 \cong p_2 A_2 p_2$. In the first case, where the $A_i$ are ${\rm II_1}$ factors, this implies that $A_1 \cong A_2^t$ for some $t > 0$. In the second case, where the $A_i$ are type III factors, this implies that $A_1 \cong A_2$.

For the converse, first assume that the $(A_i,\psi_i)$ are ${\rm II_1}$ factors with their tracial states and $A_1 \cong A_2^t$ for some $t > 0$. Take nonzero projections $p_i \in A_i$ such that $p_1 A_1 p_1 \cong p_2 A_2 p_2$. By the uniqueness of the trace, we have $(p_1 A_1 p_1,(\psi_1)_{p_1}) \cong (p_2 A_2 p_2,(\psi_2)_{p_2})$. It then follows from Proposition \ref{prop.corner-free-product-aw} that $p_1 M_1 p_1 \cong p_2 M_2 p_2$. Since the $M_i$ are type ${\rm III}$ factors, this further implies that $M_1 \cong M_2$.

Finally assume that the $(A_i,\psi_i)$ are full type III factors with almost periodic states having a factorial centralizer $A_i^{\psi_i}$ and that $\pi : A_1 \recht A_2$ is an isomorphism of $A_1$ onto $A_2$. Denote by $\Gamma = \Sd(A_1) = \Sd(A_2)$ the $\Sd$-invariant of $A_1 \cong A_2$. Define $(B_i,\theta_i) = (\rB(\ell^2(\N)) \ovt A_i, \Tr \ot \psi_i)$. By \cite[Lemma 4.8]{Co74}, the weight $\theta_i$ on $B_i$ is a $\Gamma$-almost periodic weight. By \cite[Theorem 4.7]{Co74}, there exists a unitary $U \in B_2$ and a constant $\al > 0$ such that $\theta_2 \circ \Ad(U) \circ (\id \ot \pi) = \al \, \theta_1$. Since $A^{\psi_2}$ is a factor, after a unitary conjugacy of $\pi$, we find nonzero projections $p_i \in A_i^{\psi_i}$ such that $\pi(p_1) = p_2$ and $(\psi_2)_{p_2} \circ \pi = (\psi_1)_{p_1}$ on $p_1 A_1 p_1$. As in the previous paragraph, we can use Proposition \ref{prop.corner-free-product-aw} to conclude that $M_1 \cong M_2$.
\end{proof}

We finally consider two further structural properties of free Araki--Woods factors: the free absorption property and the structure of its continuous core. We say that a von Neumann algebra $M$ with a faithful normal state $\vphi$ has the \emph{free absorption property} if the free product $(N,\psi) = (M,\vphi) \ast (\rL(\F_\infty),\tau)$ satisfies $N \cong M$. One of the key results in \cite{Sh96} is the free absorption property for the almost periodic free Araki--Woods factors. In general, we get the following result.

\begin{prop}
Let $(M,\vphi) = (\Gamma(\mu,+\infty)\dpr,\vphi_{\mu,+\infty})$ be a free Araki--Woods factor with infinite multiplicity. Then $(M,\vphi)$ has the free absorption property if and only if the atomic part $\mu_a$ is nonzero.
\end{prop}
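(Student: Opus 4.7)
\emph{Proof plan.} The plan is to treat the two implications separately, with most of the work consisting in assembling results already proved in the paper.

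For the ``if'' direction, assume $\mu_a \neq 0$. My strategy is to produce a state-preserving splitting of the form $(M,\vphi) \cong (M_0,\vphi_0) \ast (\rL(\F_\infty),\tau)$ for some free Araki--Woods factor $(M_0,\vphi_0)$; from this, the free absorption property is immediate, since $(\rL(\F_\infty),\tau)\ast(\rL(\F_\infty),\tau) \cong (\rL(\F_\infty),\tau)$ state-preservingly (both sides are $\rL(\F_\infty)$ equipped with its unique trace). If $\Lambda = \Lambda(\mu_a) \neq \{0\}$, Theorem \ref{thm.isom} provides a state-preserving isomorphism $(M,\vphi) \cong (\Gamma(\mu\ast\delta_\Lambda,+\infty)\dpr,\vphi_{\mu\ast\delta_\Lambda,+\infty})$, and the measure $\mu\ast\delta_\Lambda$ carries a nonzero atom at $0$ because any atom $a$ of $\mu_a$ convolves with $-a \in \Lambda$ to produce one. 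If $\Lambda = \{0\}$, then $\mu_a$ itself is already a positive multiple of $\delta_0$. In either case, I may write the resulting measure as $\mu' + c\delta_0$ with $c > 0$ and invoke \cite[Theorem 2.11]{Sh96}, which turns direct sums of orthogonal representations into free products of free Araki--Woods factors, to obtain $(M,\vphi) \cong (\Gamma(\mu',+\infty)\dpr,\vphi_{\mu',+\infty}) \ast (\Gamma(c\delta_0,+\infty)\dpr,\vphi_{c\delta_0,+\infty})$. The second factor corresponds to the trivial orthogonal representation of $\R$ of infinite multiplicity, so its free quasi-free state is a trace and it is state-preservingly isomorphic to $(\rL(\F_\infty),\tau)$, completing this direction.

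For the ``only if'' direction, assume $\mu_a = 0$. Then Corollary \ref{corC} applies and every centralizer of $M$ is amenable. On the other hand, in $(N,\psi) = (M,\vphi) \ast (\rL(\F_\infty),\tau)$ the subalgebra $\rL(\F_\infty)$ lies in the centralizer $N^\psi$, since $\tau$ is a trace and hence the modular automorphism group of $\psi$ restricts to the identity on $\rL(\F_\infty)$; in particular $N^\psi$ is nonamenable. Any isomorphism $N \cong M$ would transport $\psi$ to a faithful normal state on $M$ with nonamenable centralizer, contradicting Corollary \ref{corC}. Hence $M$ does not have the free absorption property.

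I do not anticipate any serious obstacle in carrying out this plan. The only points worth a moment of verification are the state-preserving identification $(\Gamma(c\delta_0,+\infty)\dpr,\vphi_{c\delta_0,+\infty}) \cong (\rL(\F_\infty),\tau)$, which is standard since the associated orthogonal representation is trivial, and the fact that $\rL(\F_\infty)$ sits inside the centralizer of $\vphi\ast\tau$, which follows from the standard description of the modular automorphism group of a free product state with respect to one of the state-preserving factors.
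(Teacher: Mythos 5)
Your proof is correct and follows essentially the same strategy as the paper: for the converse, both arguments combine Corollary \ref{corC} with the observation that $\rL(\F_\infty)$ sits in the centralizer of the free product state, so the free product cannot have all centralizers amenable. In the forward direction you diverge slightly when $\mu$ has a nonzero atom: the paper splits off an almost periodic type ${\rm III}$ free Araki--Woods factor and cites the free absorption property of \cite[Theorem 5.4]{Sh96}, whereas you first apply Theorem \ref{thm.isom} to create an atom at $0$ and then split off $(\rL(\F_\infty),\tau)$ directly; both routes work, yours being marginally more self-contained within the paper.
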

\begin{proof}
If $\mu(\{0\}) > 0$, then $(M,\vphi)$ freely splits off $(\rL(\F_\infty),\tau)$ and the free absorption property immediately holds. If $\mu(\{a\}) > 0$ for some $a \neq 0$, then $(M,\vphi)$ freely splits off an almost periodic free Araki--Woods factor of type III and the free absorption property follows from \cite[Theorem 5.4]{Sh96}. Conversely, if $\mu_a = 0$, it follows from Corollary \ref{corC} that $M$ has all its centralizers amenable. But then $M$ cannot have the free absorption property.
\end{proof}

One of the most intriguing isomorphism questions for free Araki--Woods factors, well outside the scope of our methods, is whether $\Gamma(\lambda,1)\dpr \cong \Gamma(\lambda + \delta_0,1)\dpr$~? In \cite[Theorem 4.8]{Sh97a}, it was shown that the continuous core of $\Gamma(\lambda,1)\dpr$ is isomorphic with $\rB(\ell^2(\N)) \ovt \rL(\F_\infty)$. We prove that the same holds for $\Gamma(\lambda + \delta_0,1)\dpr$. Note here that in \cite[Corollary 1.10]{Ha15}, it is proved that if $\mu$ is singular w.r.t.\ the Lebesgue measure $\lambda$, then the continuous core of $\Gamma(\mu,m)\dpr$ is never isomorphic with $\rB(\ell^2(\N)) \ovt \rL(\F_\infty)$. Under the stronger assumption that all convolution powers $\mu^{\ast n}$ are singular w.r.t.\ the Lebesgue measure, this was already shown in \cite{Sh02}.

\begin{prop}\label{prop.core-lambda-plus-delta-0}
The continuous core of $\Gamma(\lambda + \delta_0,1)\dpr$ is isomorphic with $\rB(\ell^2(\N)) \ovt \rL(\F_\infty)$.
\end{prop}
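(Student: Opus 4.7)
The plan is to decompose $M = \Gamma(\lambda+\delta_0,1)\dpr$ as a free product, push this decomposition to the continuous core via the amalgamated free product formula, and then identify the resulting algebra by compressing to a tracial setting where Shlyakhtenko's identification $\core(\Gamma(\lambda,1)\dpr)\cong \rB(\ell^2(\N))\ovt\rL(\F_\infty)$ from \cite[Theorem 4.8]{Sh97a} is directly usable.

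First, the measure $\lambda+\delta_0$ is a direct sum of the continuous piece $\lambda$ and the trivial one-dimensional representation at $0$. Applying the fact that $\Gamma$ turns direct sums into free products \cite[Theorem 2.11]{Sh96} (and noting that the atom at $0$ gives a trivial restricted modular group, hence a tracial second factor) yields a state preserving decomposition
$$(M,\vphi_{\lambda+\delta_0,1}) \;\cong\; (\Gamma(\lambda,1)\dpr,\vphi_{\lambda,1}) \ast (\rL(\F_1),\tau).$$

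Next, I would invoke Ueda's amalgamated free product formula for continuous cores of free products: if $(N,\vphi) = (N_1,\vphi_1)\ast(N_2,\vphi_2)$, then $\core_\vphi(N)\cong \core_{\vphi_1}(N_1) \ast_{\rL(\R)} \core_{\vphi_2}(N_2)$, amalgamated over the canonical common copy of $\rL(\R)$ coming from the modular action. Since the state on $\rL(\F_1)$ is tracial, one has $\core(\rL(\F_1))=\rL(\F_1)\ovt\rL(\R)$, and therefore
$$\core(M) \;\cong\; \core(\Gamma(\lambda,1)\dpr) \,\ast_{\rL(\R)}\, \bigl(\rL(\F_1)\ovt\rL(\R)\bigr).$$
Both sides being properly infinite semifinite, one may now compress by a finite trace projection $p\in\rL(\R)$. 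The left compression $p\,\core(\Gamma(\lambda,1)\dpr)\,p$ is $\rL(\F_\infty)$ by \cite[Theorem 4.8]{Sh97a}, with $p\rL(\R)p$ sitting inside as a diffuse abelian subalgebra. The problem is thus reduced to identifying
$$\rL(\F_\infty) \,\ast_{p\rL(\R)p}\, \bigl(\rL(\F_1)\ovt p\rL(\R)p\bigr) \;\cong\; \rL(\F_\infty).$$

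The last identification morally says that adjoining to $\rL(\F_\infty)$ a semicircular element which commutes with a fixed diffuse abelian subalgebra $A = p\rL(\R)p$ and is scalar-free from it yields again $\rL(\F_\infty)$; this would follow from Shlyakhtenko's theory of $A$-valued semicircular systems together with Dykema-style free absorption results for interpolated free group factors. Once that is in hand, the conclusion is immediate: since $M$ is a type ${\rm III_1}$ factor, $\core(M)$ is a properly infinite semifinite factor, and so the compression identification lifts to $\core(M)\cong \rB(\ell^2(\N))\ovt\rL(\F_\infty)$. The main technical obstacle is precisely this amalgamated free product computation; an alternative route, closer in spirit to the original proof of \cite[Theorem 4.8]{Sh97a}, would be to redo the direct Fock space analysis of the core of $\Gamma(\lambda,1)\dpr$ in the presence of the extra semicircular generator coming from the atom $\delta_0$, and to check that one still ends up with an $\rL(\F_\infty)$ compression.
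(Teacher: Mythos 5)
Your overall strategy coincides with the paper's up to a point: both decompose $\Gamma(\lambda+\delta_0,1)\dpr$ as $(\Gamma(\lambda,1)\dpr,\vphi_{\lambda,1})\ast(B,\tau)$ with $B$ diffuse abelian, and both pass to $\core_{\vphi_1}(M_1)\cong\core_\vphi(\Gamma(\lambda,1)\dpr)\ast_{\rL_\vphi(\R)}(\rL_\vphi(\R)\ovt B)$. The problem is the last step, which you yourself flag as ``the main technical obstacle'': the identification $\rL(\F_\infty)\ast_{p\rL(\R)p}\bigl(\rL(\F_1)\ovt p\rL(\R)p\bigr)\cong\rL(\F_\infty)$ does not follow from Dykema-style free absorption. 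Those results concern scalar free products ($\ast_{\C}$); an amalgamated free product over a diffuse abelian subalgebra $A\subset\rL(\F_\infty)$ depends essentially on \emph{how} $A$ sits inside $\rL(\F_\infty)$, and once you have compressed and invoked \cite[Theorem 4.8]{Sh97a} as a black box, you have discarded exactly that information. So as written the argument has a genuine gap, not merely a missing routine verification.

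The paper's resolution is to postpone the identification with $\rL(\F_\infty)$ until after the amalgamation has been absorbed. Concretely, \cite[Theorem 4.1]{Sh97a} realizes $\core_\vphi(\Gamma(\lambda,1)\dpr)$ as an $A$-valued semicircular system $\Phi(A,\Tr,\vphi)$ with $A=\rL^\infty(\R)$ and coarse bimodule, \emph{compatibly with the conditional expectations onto} $A\cong\rL_\vphi(\R)$. The functorial identity $\Phi(A,\Tr,\vphi)\ast_A D\cong\Phi(D,\Tr,\psi)$ then converts the amalgamated free product with $A\ovt B$ into a $D$-valued semicircular system over the diffuse abelian algebra $D=A\ovt B$, again with coarse bimodule; since any two such data are conjugate by a trace preserving isomorphism of diffuse abelian algebras, \cite[Theorem 4.8]{Sh97a} applies to the result and gives $\rB(\ell^2(\N))\ovt\rL(\F_\infty)$. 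This is essentially the ``redo the Fock space analysis'' alternative you mention at the end; to repair your argument you would need to carry out that route (or otherwise pin down the position of $p\rL(\R)p$ inside the compression of the core) rather than appeal to free absorption.
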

\begin{proof}
In \cite{Sh97a,Sh97b}, von Neumann algebras generated by $A$-valued semicircular elements are introduced. In the special case where $A$ is semifinite and equipped with a fixed faithful normal semifinite trace $\Tr$, this construction can be summarized as follows.

Let $H$ be a Hilbert $A$-bimodule, meaning that $H$ is a Hilbert space equipped with a normal homomorphism $A \recht \rB(H)$ and a normal anti-homomorphism $A \recht \rB(H)$ having commuting images. We denote the left and right action of $A$ on $H$ as $a \cdot \xi \cdot b$ for all $a,b \in A$, $\xi \in H$. Further assume that $S$ is an $A$-anti-bimodular involution on $H$. More precisely, $S$ is a closed, densely defined operator on $H$ such that $S(\xi) \in \rD(S)$ with $S(S(\xi)) = \xi$ for all $\xi \in \rD(S)$ and such that for all $\xi \in \rD(S)$ and all $a,b \in A$, we have $a \cdot \xi \cdot b \in \rD(S)$ and $S(a \cdot \xi \cdot b) = b^* \cdot S(\xi) \cdot a^*$. Define
$$\cF_A(H) = \rL^2(A,\Tr) \oplus \bigoplus_{k=1}^\infty \bigl(\underbrace{H \ot_A H \ot_A \cdots \ot_A H}_{\text{$k$ factors}}\bigr) \; .$$
A vector $\xi \in H$ is called right bounded if there exists a $\kappa > 0$ such that $\|\xi \cdot a\| \leq \kappa \|a\|_{2,\Tr}$ for all $a \in \cn_{\Tr}$. For every $\xi \in H$, there exists an increasing sequence of projections $p_n \in A$ such that $p_n \recht 1$ strongly and $\xi \cdot p_n$ is right bounded for all $n$. So, the subspace $H_0 \subset H$ defined as
$$H_0 = \{\xi \in \rD(S) : \xi \;\;\text{and}\;\; S(\xi)\;\;\text{are right bounded}\;\}$$
is dense. For every right bounded vector $\xi \in H$, we have a natural left creation operator $\ell(\xi) \in \rB(\cF_A(H))$. Then we define
$$\Phi(A,\Tr,H,S) = \bigl( A \cup \{\ell(\xi) + \ell(S(\xi))^* : \xi \in H_0 \}\bigr)\dpr \; .$$
There is a normal conditional expectation $\rE : \Phi(A,\Tr,H,S) \recht A$ given by $\rE(x) P = P x P$, where $P : \cF_A(H) \recht \rL^2(A,\Tr)$ is the orthogonal projection. By \cite[Proposition 5.2]{Sh97b}, we have that $\rE$ is faithful. By construction, if $A = \C 1$ and $\Tr(1) = 1$, and using the notation introduced before Lemma \ref{lem.model}, we find the free Araki--Woods factor $\Phi(\C 1,\Tr,H,S) \cong \Gamma(H,S)\dpr$.

The above construction can be applied to a normal completely positive map $\vphi : A \recht A$ satisfying $\Tr(\vphi(a) b) = \Tr(a \vphi(b))$ for all $a , b \in \cm_{\Tr}$. To such a map $\vphi$, we associate the Hilbert $A$-bimodule $H_\vphi$ by separation and completion of $A \ot \cn_{\Tr}$ with inner product
$$\langle a \ot_\vphi b , c \ot_\vphi d \rangle = \Tr(b^* \vphi(a^* c) d) \; .$$
We also define the anti-unitary involution $S(a \ot_\vphi b) = b^* \ot_\vphi a^*$. We denote the resulting von Neumann algebra $\Phi(A,\Tr,H,S)$ as $\Phi(A,\Tr,\vphi)$.

Given a trace preserving inclusion $(A,\Tr) \subset (D,\Tr)$, we denote by $\rE : D \recht A$ the unique $\Tr$-preserving conditional expectation and define $\psi : D \recht D : \psi(d) = \vphi(\rE(d))$ for all $d \in D$. Then, the functor $\Phi$ satisfies
\begin{equation}\label{eq.iso-amalg-Phi}
\Phi(A,\Tr,\vphi) \ast_A D \cong \Phi(D,\Tr,\psi)
\end{equation}
where the amalgamated free product is taken w.r.t.\ the canonical conditional expectations.

Let $(M,\vphi) = (\Gamma(\lambda,1)\dpr,\vphi_{\lambda,1})$. We can then reformulate \cite[Theorem 4.1]{Sh97a} as
\begin{equation}\label{eq.first-iso-core}
\core_\vphi(M) \cong \Phi(A,\Tr,\vphi)
\end{equation}
where $A = \rL^\infty(\R)$, $\Tr(f) = \int_\R f(x) \exp(-x) \, {\rm d}x$ and $\vphi : A \recht A$ is such that the associated $A$-bimodule $H$ is isomorphic with the coarse $A$-bimodule $\rL^2(\R^2)$ with anti-unitary involution $(S \xi)(x,y) = \overline{\xi(y,x)}$. Under the identification $\rL_\vphi(\R) \cong \rL^\infty(\R)$, the isomorphism in \eqref{eq.first-iso-core} respects the canonical conditional expectations $\core_\vphi(M) \recht \rL_\vphi(\R)$ and $\Phi(A,\Tr,\vphi) \recht A$.

By \cite[Theorem 4.8]{Sh97a}, we have in this particular case that $\Phi(A,\Tr,\vphi) \cong \rB(\ell^2(\N)) \ovt \rL(\F_\infty)$. Let now $(D,\Tr_D)$ be an arbitrary diffuse abelian von Neumann algebra with a faithful normal semifinite trace satisfying $\Tr(1) = +\infty$ and let $\psi : D \recht D$ be any normal completely positive map satisfying $\Tr_D(\psi(c) d) = \Tr_D(c \psi(d))$ for all $c,d \in \cm_{\Tr_D}$ such that the associated $D$-bimodule $H$ and anti-unitary involution $S$ are isomorphic with $\rL^2(D,\Tr_D) \ot \rL^2(D,\Tr_D)$ with $S(c \ot d) = d^* \ot c^*$. Since there exists an isomorphism $\al : D \recht A$ of $D$ onto $A$ satisfying $\Tr \circ \al = \Tr_D$, it follows that
\begin{equation}\label{eq.second-iso}
\begin{split}
\Phi(D,\Tr_D,\psi) &\cong \Phi(D,\Tr_D, \rL^2(D,\Tr_D) \ot \rL^2(D,\Tr_D),S) \\ &\cong \Phi(A,\Tr,\rL^2(A,\Tr) \ot \rL^2(A,\Tr),S) \\ &\cong \Phi(A,\Tr,\vphi) \cong \rB(\ell^2(\N)) \ovt \rL(\F_\infty) \; .
\end{split}
\end{equation}

Write $(M_1,\vphi_1) = (\Gamma(\lambda + \delta_0,1)\dpr,\vphi_{\lambda+\delta_0,1})$. Since $(M_1,\vphi_1) \cong (M,\vphi)\ast (B,\tau)$ for some diffuse abelian von Neumann algebra $B$ with faithful normal state $\tau$, it follows that
$$\core_{\vphi_1}(M_1) \cong \core_\vphi(M) \ast_{\rL_\vphi(\R)} (\rL_\vphi(\R) \ovt B) \; .$$
Since the isomorphism in \eqref{eq.first-iso-core} respects the conditional expectations, we conclude that
$$\core_{\vphi_1}(M_1) \cong \Phi(A,\Tr,\vphi) \ast_A (A \ovt B) \; ,$$
where the conditional expectation $A \ovt B \recht A$ is given by $\id \ot \tau$. Write $D = A \ovt B$ and define $\psi : D \recht D : \psi(d) = \vphi((\id \ot \tau)(d)) \ot 1$. By \eqref{eq.iso-amalg-Phi}, we get that
$$\Phi(A,\Tr,\vphi) \ast_A (A \ovt B) \cong \Phi(D, \Tr \ot \tau, \psi) \; .$$
Since $D$ is diffuse abelian and the $D$-bimodule associated with $\psi$ is isomorphic with the coarse $D$-bimodule, it follows from \eqref{eq.second-iso} that $\Phi(D,\Tr \ot \tau,\psi) \cong \rB(\ell^2(\N)) \ovt \rL(\F_\infty)$. So, the proposition is proved.
\end{proof}


\bibliographystyle{plain}

\end{document}